\newlist{gcases}{enumerate}{1}
\setlist[gcases,1]{
  label={{\it Case}~{\it \Alph*}.},
  topsep=0ex,
  leftmargin=0in,
  labelsep=.1in,
  itemindent=.7in,
  itemsep=0ex
}
\newlist{tenumerate}{enumerate}{1}
\setlist[tenumerate,1]{
  label={(\arabic*)},
  topsep=0ex,
  leftmargin=.3in,
  labelsep=.1in,
  itemindent=0in,
  itemsep=0ex
}
\newlength{\tabwidth}
\newlength{\tabheight}
\newlength{\tabrule}
\newlength{\tabwidthx}
\newlength{\tabheightx}
\def\gentabbox#1#2#3#4{\vbox to \tabheight{\setlength{\tabrule}{#3}%
  \setlength{\tabwidthx}{#1\tabwidth}\addtolength{\tabwidthx}{\tabrule}%

\setlength{\tabheightx}{#2\tabheight}\addtolength{\tabheightx}{-\tabheight}%
  \hbox to #1\tabwidth{%
 \hspace{-0.5\tabrule}\rule{\tabrule}{#2\tabheight}\hspace{-\tabrule}%
    \vbox to #2\tabheight{\hsize=\tabwidthx%
      \vspace{-0.5\tabrule}\hrule width\tabwidthx height\tabrule%
      \vspace{-0.5\tabrule}\vfil%
      \hbox to \tabwidthx{\hss#4\hss}%
        \vfil\vspace{-0.5\tabrule}%
      \hrule width\tabwidthx height\tabrule\vspace{-0.5\tabrule}}%
 \hspace{-\tabrule}\rule{\tabrule}{#2\tabheight}\hspace{-0.5\tabrule}}%
  \vspace{-\tabheightx}}}
\def\genblankbox#1#2{\vbox to \tabheight{\vfil\hbox to
#1\tabwidth{\hfil}}}
\def\tabbox#1#2#3{\gentabbox{#1}{#2}{0.4pt}{\strut #3}}
\newenvironment{tableau}{\bgroup\catcode`\:=13 \catcode`\.=13
  \catcode`\;=13 \catcode`\>=13 \catcode`\^=13
  \setlength{\tabheight}{3ex}\setlength{\tabwidth}{3ex}%
  \def\b##1##2##3{\gentabbox{##1}{##2}{1.2pt}{\vbox{##3}}}%
  \def\n##1##2##3{\gentabbox{##1}{##2}{0.4pt}{\vbox{##3}}}%
  \vbox\bgroup\offinterlineskip}{\egroup\egroup}
\newtheoremstyle{mytheoremstyle} 
    {\topsep}                    
    {\topsep}                    
    {}                   
    {}                           
    {}                   
    {.}                          
    {0.5em}                       
    {\thmnumber{4.#2.} \thmname{#1}}  
\newtheoremstyle{myremarkstyle} 
    {\topsep}                    
    {\topsep}                    
    {}                   
    {}                           
    {}                   
    {.}                          
    {0.5em}                       
    {\thmname{#1}}  
\theoremstyle{mytheoremstyle}
\newtheorem{theorem}{THEOREM}[section]
\newtheorem{corollary}[theorem]{COROLLARY}
\newtheorem{lemma}[theorem]{LEMMA}
\newtheorem{proposition}[theorem]{PROPOSITION}
\newtheorem{definition}[theorem]{DEFINITION}
\newtheorem{notation}[theorem]{NOTATION}
\newtheorem{remark}[theorem]{REMARK}
\theoremstyle{myremarkstyle}
\newtheorem{remark*}{REMARK}
\newcommand\T{\mathbf{T}}
\newcommand\Ss{\mathbf{S}}
\newcommand\oT{\overline{\mathbf{T}}}
\newcommand\tT{\tilde{\mathbf{T}}}
\newcommand\LL{\mathbf{L}}
\newcommand\R{\mathbf{R}}
\newcommand\EE{\mathbf{E}}
\newcommand\SC{\textup{SC}}
\newcommand\In{\textup{In}}
\newcommand\OC{\textup{OC}}
\newcommand\RE{\textup{Re}}
\newcommand\sh{\textup{Shape}}
\newcommand\E{\mathbf{E}}
\newcommand\ab{{\alpha \beta}}
\newcommand\ba{{\beta \alpha}}
\newcommand\Ab{\mathcal{A}_\beta}
\newcommand\Bb{\mathcal{B}_\beta}
\begin{document}

\noindent
{\bf \large On the classification of primitive ideals for complex \\ classical Lie algebras, IV}

\vspace{.3in}
\noindent
THOMAS PIETRAHO \\
\vspace{.1in}
{\it \small Department of Mathematics, Bowdoin College, Brunswick, ME 04011, USA}

\noindent
W.~M.~MCGOVERN \\
\vspace{.1in}
{\it \small Department of Mathematics, University of Washington, Seattle, WA, 98195, USA}

\noindent
DEVRA GARFINKLE

\section*{Introduction}
This paper is the fourth and last in the series \lq\lq On the classification of primitive ideals for complex classical Lie algebras", extending the main results of \cite{garfinkle3} to type $D$.  Here the generalized $\tau$-invariant defined in \cite{garfinkle3} must be defined in a different way, as there is no pair of adjacent simple roots in type $D$ with different lengths.  Instead we attach a family of operators to a quadruple of simple roots spanning a subsystem of type $D_4$, each of them taking either one or two values, like the corresponding operators in types $B$ and $C$, and use these to characterize primitive ideals via their generalized $\tau$-invariants.

\section{}

\begin{notation}

We will implicitly rely on the notation introduced in the first three parts of this sequence of papers.  It is summarized in an appendix of \cite{garfinkle3}.

 \begin{tenumerate}
\item For simple roots we take $\alpha_1' = e_2+e_1$ and $\alpha_m = e_m - e_{m-1}$ for $2 \leq m \leq n$. This non-standard notation will facilitate the use of results from \cite{garfinkle3}.  Set $\Pi' = \{ \alpha_1', \alpha_2, \ldots, \alpha_n\}$ and write $s_1'$ for $s_{\alpha_1'}$, the reflection in $\alpha_1'$. We let $W$ be the Weyl group of type $C_n$ and will write $W'$ for the subgroup of $W$ generated by the reflections in the roots belonging to $\Pi'$.  Further, let $W''=W\setminus W'$.

\item We define $$\mathcal{S}'(M_1,M_2) = \{\gamma \in \mathcal{S}(M_1,M_2) \, | \, |\{(e,g,\epsilon) \in \gamma \, | \epsilon = -1\}| \equiv 0  \pmod{2} \},$$  and set $\mathcal{S}''(M_1,M_2) = \mathcal{S}(M_1,M_2) \setminus \mathcal{S}'(M_1,M_2)$.  Define $\mathcal{S}'(n,n)$ and $\mathcal{S}''(n,n)$ by imitating \cite[1.1.2]{garfinkle1}.

\item For $(\T_1,\T_2) \in \mathcal{T}(M_1,M_2)$ we write $n_h((\T_1,\T_2)) = n_h(\T_1)+n_h(\T_2)$, and similarly define $n_v((\T_1,\T_2)$.

\item We define $$\mathcal{T}'(M_1,M_2) = \{(\T_1,\T_2) \in \mathcal{T}(M_1,M_2) \, | \, n_v((\T_1,\T_2)) \equiv 0 \pmod{4}\}$$ and $$\mathcal{T}''(M_1,M_2) = \{(\T_1,\T_2) \in \mathcal{T}(M_1,M_2) \, | \, n_v((\T_1,\T_2)) \equiv 2 \pmod{4}\}.$$  The sets $\mathcal{T'}(n,n)$ and $\mathcal{T}''(n,n)$ are defined by imitating \cite[1.1.9]{garfinkle1}.  When we wish to specify that the $D$ grid is being used explicitly, we will write $\mathcal{T}'_D(M_1,M_2)$, $ \mathcal{T}''_D(M_1,M_2)$, $\mathcal{T}'_D(n,n),$ and $\mathcal{T}''_D(n,n)$.  Write $\mathcal{T}_D(n,n)$ for the union of $\mathcal{T}'_D(n,n)$ and $\mathcal{T}''_D(n,n)$.
\end{tenumerate}

\end{notation}

\begin{remark}
    For $w \in W$ we have $\delta(s_1'w) = \SC^R(1;\SC^R(2;\In^R(1,2;\delta(w))))$ and $\delta(ws_1') = \SC^L(1;\SC^L(2;\In^L(1,2;\delta(w))))$. Consequently, we have $$\delta(W') = \mathcal{S}'(n,n) \text { and } \delta(W'')= \mathcal{S}''(n,n).$$
\end{remark}

\begin{proposition}\label{proposition:nhnv}
Let $(\T',v,\epsilon) \in M$ and let $(\T,P) = \alpha((\T',v,\epsilon))$.

\begin{tenumerate}
    \item If $\epsilon =1$, then either
    \begin{enumerate}[label=(\alph*), leftmargin=.6in, topsep=0ex, itemsep=0ex]
        \item $P$ is horizontal, $n_h(\T)=n_h(\T')+1$, and $n_v(\T) = n_v(\T')$, or
        \item $P$ is vertical, $n_h(\T) = n_h(\T') +2$, and $n_v(\T) = n_v(\T')-1$.
    \end{enumerate}
    \item If $\epsilon =-1$, then either
    \begin{enumerate}[label=(\alph*), leftmargin=.6in, topsep=0ex, itemsep=0ex]
        \item $P$ is horizontal, $n_h(\T)=n_h(\T')-1$, and $n_v(\T) = n_v(\T')+1$, or
        \item $P$ is vertical, $n_h(\T) = n_h(\T')$, and $n_v(\T) = n_v(\T')+2$.
    \end{enumerate}
\end{tenumerate}
\end{proposition}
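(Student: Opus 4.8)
The plan is to open up the definition of the map $\alpha$ on $M$ and to reduce the statement, in each of the four listed cases, to two independent bookkeeping equations. The four alternatives are exactly the two values of $\epsilon$ crossed with the two possible orientations of the distinguished output domino $P$, so the proof is naturally a four-way case analysis. In each case I want the pair $(\Delta n_h,\Delta n_v):=(n_h(\T)-n_h(\T'),\,n_v(\T)-n_v(\T'))$ asserted in the statement, and the idea is to pin it down from two easy invariants rather than by a direct count of every domino.

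The first equation is unconditional: since every domino covers two cells, the number of cells of a domino tableau is $2(n_h+n_v)$, whence $\Delta n_h+\Delta n_v=\tfrac{1}{2}\big(|\sh(\T)|-|\sh(\T')|\big)$. The number of cells that $\alpha$ adjoins depends only on the output shape and is visible directly from the construction; in the four cases it equals $2,2,0,4$, so $\Delta n_h+\Delta n_v$ equals $1,1,0,2$. The second equation comes from computing $\Delta n_v$ on its own, by tracking which dominoes of $\T'$ survive unchanged, which are reoriented, and which new dominoes appear; here the sign $\epsilon$ and the orientation of $P$ are exactly what control the answer, yielding $\Delta n_v=0,-1,+1,+2$. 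Together the two equations force $(\Delta n_h,\Delta n_v)$ to be $(+1,0),(+2,-1),(-1,+1),(0,+2)$, as claimed, and the cell count furnishes an independent consistency check at each step.

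To evaluate $\Delta n_v$ I would unwind $\alpha$ into its elementary steps. The operation either deposits new cells on the frontier of $\sh(\T')$ or, as in the case $\epsilon=-1$ with $P$ horizontal where no cells are added, merely reorients existing dominoes in the manner of moving through a cycle; in both regimes the shape-change behavior of the elementary steps is recorded in \cite{garfinkle1}, and I would quote it to localize the reorientations. The crux is to show that, apart from the contribution of $P$ itself, the orientation changes forced by the intervening bumping or cycle-moving have net effect on $n_v$ equal to the prescribed value, so that $\Delta n_v$ is determined by $\epsilon$ and the orientation of $P$ alone.

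I expect the principal obstacle to be controlling this when the chain of moves inside $\alpha$ is long or branches: a priori it may thread through several dominoes and interact with the type-$D$ core, and one must rule out spurious net reorientations. I would handle this by an induction along the chain, carrying as an invariant that the partially built tableau differs from $\T'$ only by a single relocated domino of controlled orientation, and by treating the first column and the distinguished cell of the $D$ grid separately, since there the elementary insertion and cycle rules degenerate and must be checked by hand.
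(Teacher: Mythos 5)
Your first bookkeeping equation rests on a false claim about $\alpha$. By construction, $\alpha$ adjoins exactly one domino position in every case: $\sh(\T)=\sh(\T')\cup P$ with $|P|=2$, which is precisely what makes $\beta$ (removal of the extremal position $P$) its inverse. So the number of adjoined cells is $2$ in all four cases, never $0$ or $4$, and your equation honestly applied gives $\Delta n_h+\Delta n_v=1$ throughout. Your assertion that in the case $\epsilon=-1$, $P$ horizontal, ``no cells are added'' and $\alpha$ merely reorients dominoes is simply wrong: inserting $(1,-1)$ into the tableau consisting of one horizontal domino at $\{S_{1,1},S_{1,2}\}$ lands in exactly that case (both dominoes become vertical and $P=\{S_{2,1},S_{2,2}\}$ is horizontal), and the shape grows from two cells to four. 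You have reverse-engineered the counts $2,2,0,4$ from the increments printed in the statement rather than derived them from the construction; done correctly, your own method shows that parts 2(a)--(b) as printed are incompatible with $n_h+n_v$ being the number of dominoes. In fact the printed statement appears to carry a misprint: the $n_v$-increments in 2(a) and 2(b) should be exchanged ($+2$ in (a), $+1$ in (b)). One sees this from the base case --- inserting $(v,-1)$ into the empty tableau produces a single vertical domino, so in case 2(b) $n_v$ increases by $1$ --- and from the following Corollary, which requires each $\epsilon=-1$ step to change $n_v$ of the pair by exactly $2$. A sound blind proof attempt should have detected this inconsistency instead of building scaffolding to match it.

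Even setting the arithmetic aside, your ``second equation'' --- the direct evaluation of $\Delta n_v$ by tracking survivals and reorientations along the bumping chain --- is the entire content of the proposition, and you never actually establish it: it is delegated to unspecified shape-change statements in \cite{garfinkle1} and to an induction along the chain whose invariant and cases are not examined. The paper's proof does this work by a different and cleaner induction: it inducts on $|M|$, removes the largest label $e$ (using that insertion commutes with this removal, so that $\overline{\T}=\T-e$), and then analyzes four cases according to whether the terminal position $\overline{P}$ of the truncated insertion is horizontal or vertical and whether it meets $P(e,\T')$ in one square or coincides with it; in each case only the domino of $e$ can change orientation, and the counts follow. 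That interaction analysis is exactly what is absent from your proposal, so the proposal does not constitute a proof.
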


\begin{proof}
The proof is by induction on $|M|$; the case when $|M|=1$ is obvious.
Set $e = \sup M$ and note that the proof of the proposition is clear when $v=e$, so assume $v \neq e$.  Set $\overline{\T}' = \T' -e$ and $(\overline{\T},\overline{P}) = \alpha((\overline{\T}',v,\epsilon)).$  Then $\overline{\T}=\T - e.$  Set $P_e = P(e,\T)$ and $P_e' = P(e,\T')$.  We will prove statement (1) for $n_h$, the others can be verified by a similar case-by-case analysis.  There are four cases.

\begin{gcases}
 \item Here $\overline{P}$ is horizontal and $|\overline{P}\cap {P}_e'|=1$.  Then $P_e'$ and $P$ are vertical and $P_e$ is horizontal.  Thus $n_h(\T) = n_h(\overline{\T})+1 = n_h(\overline{\T}')+2=n_h(\T')+2$, as desired.

 \item Here $\overline{P}$ is horizontal and $\overline{P} =\overline{P}_e'$.  Then $P_e'$, $P$, and $P_e$ are horizontal.  Thus $n_h(\T) = n_h(\overline{\T})+1 = n_h(\overline{\T}')+2=n_h(\T')+2$.

 \item Here $\overline{P}$ is vertical and $|\overline{P}\cap {P}_e'|=1$.  Then $P_e'$ and $P$ are horizontal and $P_e$ is vertical.  Thus $n_h(\T) = n_h(\overline{\T}) = n_h(\overline{\T}')+2=n_h(\T')+1$.

 \item Here $\overline{P}$ is vertical and $\overline{P} =\overline{P}_e'$.  Then $P_e'$, $P$, and $P_e$ are vertical.  Thus $n_h(\T) = n_h(\overline{\T}) = n_h(\overline{\T}')+2=n_h(\T')+2$.
\end{gcases}
\end{proof}

\begin{corollary}
Let $\gamma \in \mathcal{S}(M_1,M_2)$.  Then $n_h(A(\gamma)) \equiv 0 \pmod{2} $ and $n_v(A(\gamma)) \equiv 0 \pmod{2}.$  We have $\gamma \in \mathcal{S}'(M_1,M_2)$ if and only if $n_v(A(\gamma)) \equiv 0 \pmod{4}$ and $\gamma \in \mathcal{S}''(M_1,M_2)$ if and only if $n_v(A(\gamma)) \equiv 2 \pmod{4}.$
\end{corollary}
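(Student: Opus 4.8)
The plan is to deduce the corollary from Proposition~\ref{proposition:nhnv} by induction on the number of triples of $\gamma$, using that $A(\gamma)$ is built from a sequence of single insertions, each of the form $\alpha((\T',v,\epsilon))$ to which the proposition applies. Set $k=|\{(e,g,\epsilon)\in\gamma\mid\epsilon=-1\}|$. First I would reduce the two stated equivalences to a single congruence: because $\mathcal{S}(M_1,M_2)=\mathcal{S}'(M_1,M_2)\sqcup\mathcal{S}''(M_1,M_2)$ and the residues $0$ and $2$ modulo $4$ are disjoint, it suffices to prove $n_v(A(\gamma))\equiv 2k\pmod 4$. This congruence already forces $n_v(A(\gamma))$ to be even, so the only remaining point from the first sentence is the evenness of $n_h(A(\gamma))$.

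For the inductive step I would remove the last-recorded triple, realize $A(\gamma)$ as one further application of $\alpha$ to the datum coming from the smaller symbol, and read off the effect on $(n_h,n_v)$ from the four cases of Proposition~\ref{proposition:nhnv}. Tabulating these, a positive triple changes $n_v$ by $0$ or $-1$ and $n_h$ by $+1$ or $+2$, while a negative triple changes $n_v$ by $+1$ or $+2$ and $n_h$ by $-1$ or $0$; summing over all steps expresses $n_h$ and $n_v$ in terms of how many insertions fall into each case, and the two targets become congruences among those case-counts. Concretely, $n_v\equiv 2k\pmod 4$ turns into the assertion that the insertions in the two \emph{discrepant} cases---(1b) and (2a), where the orientation of $P$ runs against the sign $\epsilon$---occur a controlled number of times modulo $4$.

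The main obstacle is exactly these discrepant cases: each of them changes $n_v$ by an odd amount, so neither ``$n_v$ even'' nor ``$n_v\equiv 2k$'' is preserved step by step, and a naive induction collapses. I see two complementary handles. For the evenness half, a $2\times2$ interchange of two horizontal dominoes for two vertical ones changes $n_v$ by $\pm2$, so the parities of $n_v$ and $n_h$ are invariants of the \emph{shape} of $A(\gamma)$; this reduces the first sentence to a computation on that shape. For the sharper statement modulo $4$, I would return to the removal analysis in the proof of Proposition~\ref{proposition:nhnv}: tracking the domino $P_e$ and the shape-change domino $P$ should pin down, in terms of $\epsilon$ and the local geometry, precisely when the discrepant cases arise, and hence show their net contribution to $n_v$ is $\equiv 0\pmod 4$. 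Carrying out this case analysis---and checking it is compatible with the evenness of $n_h$---is where the real work lies; once it is done, the congruence propagates through the induction and the four equivalences follow at once.
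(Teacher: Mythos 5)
Your reduction of the two equivalences to the single congruence $n_v(A(\gamma))\equiv 2k \pmod 4$, and your plan of inducting on the triples via Proposition 4.\ref{proposition:nhnv}, are exactly the paper's intent; but there is a genuine gap, and it comes from a misreading of what $n_v(A(\gamma))$ is. By the opening Notation of Section 1, $n_v$ and $n_h$ of a pair are sums over \emph{both} tableaux: $A(\gamma)=(\LL(\gamma),\R(\gamma))$ and $n_v(A(\gamma))=n_v(\LL(\gamma))+n_v(\R(\gamma))$. Your tabulation records only the changes to the insertion tableau and never accounts for the recording tableau, which at each inductive step acquires a new domino occupying exactly the position $P$ produced by $\alpha$, hence with the same orientation as $P$. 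Once that contribution is included, your ``discrepant cases'' vanish: in case (1b) the insertion tableau loses a vertical domino ($\Delta n_v=-1$) while the recording tableau gains one ($+1$), and in the $\epsilon=-1$ cases the pair's $n_v$ goes up by exactly $2$. Thus every insertion changes the pair's $(n_h,n_v)$ by $(2,0)$ when $\epsilon=+1$ and by $(0,2)$ when $\epsilon=-1$, independently of which subcase occurs; summing over the triples gives $n_h(A(\gamma))=2(|M_1|-k)$ and $n_v(A(\gamma))=2k$, and all four assertions follow at once. This is precisely why the paper can dispose of the corollary with a one-line induction. (One caveat when verifying this: the $n_v$-values printed in parts (2a) and (2b) of Proposition 4.\ref{proposition:nhnv} must be interchanged. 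As printed they are inconsistent, since $\alpha$ adds exactly one domino and so the changes to $n_h(\T_1)$ and $n_v(\T_1)$ must sum to $1$; equivalently, statement (2) must be the transpose of statement (1).)

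Without this observation the proposal does not close, and the repair you sketch for the mod-4 statement---controlling, modulo $4$, how often the discrepant cases occur---cannot be carried out as long as you work with the insertion tableau alone: $n_v(\LL(\gamma))$ is not even determined modulo $2$ by $k$. For instance, $\gamma=\{(1,1,-1)\}$ has $k=1$ and $\LL(\gamma)$ a single vertical domino, so $n_v(\LL(\gamma))=1$; it is only the pair that satisfies $n_v=2$. Your parity remark ($2\times 2$ flips preserve the shape and change $n_v$ by $\pm 2$) is correct and would yield the evenness claim by a different route---$\LL(\gamma)$ and $\R(\gamma)$ have the same shape, so $n_v(\LL(\gamma))\equiv n_v(\R(\gamma)) \pmod 2$ and their sum is even---but it cannot see the finer mod-4 statement. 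The recording tableau is not a technical afterthought here; it is what makes the induction go through.
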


\begin{proof}
The proof is by induction on $|M_1|$, and is then obvious from the definitions and Proposition 4.\ref{proposition:nhnv}.
\end{proof}

\begin{corollary}
    \begin{tenumerate}
        \item We have $\mathcal{T}(M_1,M_2) = \mathcal{T}'(M_1,M_2) \cup \mathcal{T}''(M_1,M_2)$.
        \item The map $A$ yields a bijection from $\mathcal{S}'(M_1,M_2)$ to $\mathcal{T}'(M_1,M_2)$ and from $\mathcal{S}''(M_1,M_2)$ to $\mathcal{T}''(M_1,M_2)$.
    \end{tenumerate}
\end{corollary}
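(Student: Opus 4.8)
The plan is to derive both statements directly from the preceding corollary together with the fact, established in the earlier parts of this series, that $A$ restricts to a bijection from $\mathcal{S}(M_1,M_2)$ onto $\mathcal{T}(M_1,M_2)$. In particular I will use that $A$ is surjective onto $\mathcal{T}(M_1,M_2)$, so that every pair $(\T_1,\T_2) \in \mathcal{T}(M_1,M_2)$ may be written as $A(\gamma)$ for some $\gamma \in \mathcal{S}(M_1,M_2)$.

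For part (1), I would first observe that by the preceding corollary we have $n_v(A(\gamma)) \equiv 0 \pmod{2}$ for every $\gamma \in \mathcal{S}(M_1,M_2)$. Using surjectivity of $A$, this shows that $n_v((\T_1,\T_2)) \equiv 0 \pmod{2}$ for every $(\T_1,\T_2) \in \mathcal{T}(M_1,M_2)$; hence $n_v$ is congruent either to $0$ or to $2$ modulo $4$, placing $(\T_1,\T_2)$ in $\mathcal{T}'(M_1,M_2)$ or in $\mathcal{T}''(M_1,M_2)$ respectively. This proves the asserted equality $\mathcal{T}(M_1,M_2) = \mathcal{T}'(M_1,M_2) \cup \mathcal{T}''(M_1,M_2)$, and I would note in passing that the union is in fact disjoint, since the residues $0$ and $2$ modulo $4$ are distinct.

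For part (2), the preceding corollary gives the equivalences $\gamma \in \mathcal{S}'(M_1,M_2) \iff n_v(A(\gamma)) \equiv 0 \pmod{4} \iff A(\gamma) \in \mathcal{T}'(M_1,M_2)$, and likewise $\gamma \in \mathcal{S}''(M_1,M_2) \iff A(\gamma) \in \mathcal{T}''(M_1,M_2)$. Since $A$ is already a bijection from $\mathcal{S}(M_1,M_2)$ to $\mathcal{T}(M_1,M_2)$, these equivalences show that $A$ carries $\mathcal{S}'(M_1,M_2)$ into $\mathcal{T}'(M_1,M_2)$ and $\mathcal{S}''(M_1,M_2)$ into $\mathcal{T}''(M_1,M_2)$, and that these restrictions are themselves bijections: injectivity is inherited from $A$, while surjectivity follows because each $(\T_1,\T_2) \in \mathcal{T}'(M_1,M_2)$ equals $A(\gamma)$ for a unique $\gamma$, which by the equivalence must lie in $\mathcal{S}'(M_1,M_2)$ (and symmetrically for the double-primed sets).

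I do not expect any serious obstacle here: the content is entirely bookkeeping once the preceding corollary is in hand, and the only external input is the bijectivity of $A$ between $\mathcal{S}(M_1,M_2)$ and $\mathcal{T}(M_1,M_2)$ from the earlier parts. The one point deserving care is to confirm that $\mathcal{T}'(M_1,M_2)$ and $\mathcal{T}''(M_1,M_2)$, defined by the conditions $n_v \equiv 0$ and $n_v \equiv 2$ modulo $4$, together exhaust $\mathcal{T}(M_1,M_2)$ — which is precisely what part (1) supplies — so that the two statements are proved in the correct logical order.
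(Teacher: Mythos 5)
Your proof is correct and is exactly the argument the paper intends: the corollary is stated there without proof precisely because it follows immediately from the preceding corollary (parity of $n_v(A(\gamma))$ and the mod-$4$ characterizations of $\mathcal{S}'$ and $\mathcal{S}''$) combined with the bijectivity of $A$ from $\mathcal{S}(M_1,M_2)$ onto $\mathcal{T}(M_1,M_2)$ established in the earlier parts of the series. Your bookkeeping, including the observation that the union in (1) is disjoint and that the two parts are proved in the correct logical order, fills in the omitted details faithfully.
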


\begin{corollary}
The map $A \circ \delta$ yields a bijection from $W'$ to $\mathcal{T}'(n,n)$ and from $W''$ to $\mathcal{T}''(n,n)$.
\end{corollary}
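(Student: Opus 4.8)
The plan is to realize $A \circ \delta$ as a composite of two bijections, each of which already respects the splitting into primed and double-primed pieces, so that the statement reduces to chasing these two restrictions separately.

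First I would invoke the fact, established in the earlier papers of this series, that $\delta$ is a bijection from $W$ onto $\mathcal{S}(n,n)$. The Remark preceding Proposition 4.\ref{proposition:nhnv} records that $\delta(W') = \mathcal{S}'(n,n)$ and $\delta(W'') = \mathcal{S}''(n,n)$; since $W = W' \cup W''$ and $\mathcal{S}(n,n) = \mathcal{S}'(n,n) \cup \mathcal{S}''(n,n)$ are disjoint decompositions, the bijection $\delta$ restricts to bijections $W' \to \mathcal{S}'(n,n)$ and $W'' \to \mathcal{S}''(n,n)$.

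Next I would apply the preceding Corollary. It asserts that $A$ is a bijection carrying $\mathcal{S}'(M_1,M_2)$ onto $\mathcal{T}'(M_1,M_2)$ and $\mathcal{S}''(M_1,M_2)$ onto $\mathcal{T}''(M_1,M_2)$; the analogous statement for the same-index sets $\mathcal{S}'(n,n)$, $\mathcal{T}'(n,n)$ follows because these are defined by imitating the $(M_1,M_2)$ constructions and the governing parity criterion---namely that $\gamma \in \mathcal{S}'$ exactly when $n_v(A(\gamma)) \equiv 0 \pmod{4}$, from the first Corollary---transfers verbatim. Thus $A$ restricts to bijections $\mathcal{S}'(n,n) \to \mathcal{T}'(n,n)$ and $\mathcal{S}''(n,n) \to \mathcal{T}''(n,n)$. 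Composing these with the restrictions of $\delta$ from the previous paragraph then yields that $A \circ \delta$ is a bijection from $W'$ to $\mathcal{T}'(n,n)$ and from $W''$ to $\mathcal{T}''(n,n)$, which is the assertion.

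The only point requiring genuine care---and the one I expect to be the main obstacle---is the transfer from the $(M_1,M_2)$ setting of the preceding Corollary to the same-index $(n,n)$ setting on the $D$ grid: one must check that the ``imitation'' definitions of $\mathcal{S}'(n,n)$ and $\mathcal{T}'(n,n)$ genuinely correspond under $A$, and that the count $n_v \pmod{4}$ behaves on the $D$ grid exactly as in the general case treated by Proposition 4.\ref{proposition:nhnv} and its corollaries. Once this bookkeeping is confirmed, the conclusion is immediate by composition, so no new combinatorial input is needed beyond the results already in hand.
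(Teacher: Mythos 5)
Your proposal is correct and is essentially the argument the paper intends: the corollary is stated without proof precisely because it follows by composing the Remark ($\delta(W') = \mathcal{S}'(n,n)$, $\delta(W'') = \mathcal{S}''(n,n)$) with the preceding Corollary giving the bijections $A: \mathcal{S}' \to \mathcal{T}'$ and $A: \mathcal{S}'' \to \mathcal{T}''$, with the $(n,n)$ case handled by the imitation of \cite[1.1.2]{garfinkle1} and \cite[1.1.9]{garfinkle1} exactly as you describe.
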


\section{}

\begin{notation}\label{notation:section2}  Let $(\T_1,\T_2) \in \mathcal{T}_K(M_1,M_2)$ for $K = B,$ $C$, or $D$.  Let $c$ be an extended cycle in $\T_1$ relative to $\T_2$ and suppose that $c$ is a union of open cycles in $\T_1$.  We write
$$Cy(c) = \{ c' \; | \; c' \subseteq c \text{ and } c' \in \OC(\T_1)\},$$
and
$$Sq(c) = \{S \; | \; \text{there is a } c' \in Cy(c) \text{ such that } S=S_b(c') \text{ or } S=S_f(c')\}.$$  We let $n_u(c)$ be the number of up cycles contained in $c$ and $n_d(c)$ be the number of down cycles contained in $c$.

\end{notation}

\begin{proposition}\label{proposition:nuextendedcycle} Let $(\T_1,\T_2)$ and $c$ be as in 4.\ref{notation:section2} and suppose that $Cy(c) \subseteq \OC^*(\T_1)$.  Let $\overline{c}$ be the extended cycle in $\T_2$ relative to $\T_1$ corresponding to $c$.  Then either
$n_u(c)+n_u(\overline{c}) = n_d(c)+n_d(\overline{c})+2$ or
$n_u(c)+n_u(\overline{c}) = n_d(c)+n_d(\overline{c})-2$.

\end{proposition}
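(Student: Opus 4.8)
The plan is to convert the statement into a telescoping count over the boundary squares recorded in $Sq(c)$. For an open cycle $c'$ set $\sigma(c') = +1$ when $c'$ is an up cycle and $\sigma(c') = -1$ when it is a down cycle; since $Cy(c) \subseteq \OC^*(\T_1)$ every $c' \in Cy(c)$ is genuinely shape-changing, and the correspondence $c \mapsto \overline{c}$ carries the union-of-open-cycles structure over to $\T_2$, so that, writing $Cy(\overline{c})$ for the open cycles contained in $\overline{c}$, we have $n_u(c) - n_d(c) = \sum_{c' \in Cy(c)} \sigma(c')$ and $n_u(\overline{c}) - n_d(\overline{c}) = \sum_{c' \in Cy(\overline{c})} \sigma(c')$. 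The assertion is then exactly that
\[
\sum_{c' \in Cy(c)} \sigma(c') \; + \; \sum_{c' \in Cy(\overline{c})} \sigma(c') \; = \; \pm 2 .
\]
The first step is to recall from \cite{garfinkle1} that the up/down type of an open cycle $c'$ is read off from the relative diagonal position of its two boundary squares $S_b(c')$ and $S_f(c')$: moving through $c'$ transports the exposed corner from $S_b(c')$ to $S_f(c')$, and $\sigma(c')$ records the direction of the resulting diagonal displacement.

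The key structural input is that the boundary squares in $Sq(c)$, together with those of $\overline{c}$, fit together into a single chain. Because $c$ is an extended cycle in $\T_1$ relative to $\T_2$, the final square $S_f(c')$ of each constituent open cycle is identified, through the opposite tableau, with the beginning square of the next constituent cycle in the chain, and the hypothesis $Cy(c) \subseteq \OC^*(\T_1)$ rules out the degenerate linkages, so that this identification is clean and no $\sigma(c')$ vanishes. I would make this precise by exhibiting $Sq(c)$ as the ordered list of corners swept out as one walks along the extended cycle, the constituent open cycles of $c$ and $\overline{c}$ occupying the successive links. Summing $\sigma$ along the chain then telescopes: the internal squares cancel in pairs, and only the two extreme boundary squares of the combined $c$-and-$\overline{c}$ chain survive.

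It remains to evaluate this surviving contribution. Walking once around the chain formed by $c$ and its partner $\overline{c}$, the net signed diagonal displacement would vanish were the two extreme corners at the same diagonal; the content of the proposition is that the correspondence between $\T_1$ and $\T_2$ forces these two corners to sit exactly one box apart, so the telescoped sum is $\pm 2$, the sign recording on which side, in $\T_1$ or in $\T_2$, the extra exposed corner lies. I would establish this by induction on $|Cy(c)| + |Cy(\overline{c})|$, peeling off the outermost open cycle together with its partner and tracking both the change in the summed signs and the motion of the exposed corner; the base case is a single matched pair of open cycles, where the displacement of $\pm 2$ is checked directly from the definitions of $S_b$, $S_f$, and the up/down dichotomy. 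I expect the main obstacle to be precisely this gluing and defect count: verifying that under the $\T_1 \leftrightarrow \T_2$ correspondence the final square of each cycle is glued to the beginning square of the next with no gap, and that the chain closes up with a discrepancy of exactly one diagonal step rather than none or two. Controlling this discrepancy is where the hypothesis $Cy(c) \subseteq \OC^*(\T_1)$ and the detailed combinatorics of extended cycles from \cite{garfinkle1} must be brought to bear.
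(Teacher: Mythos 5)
There is a genuine gap, and it sits exactly where you suspected: the ``telescoping'' step does not exist. Two separate problems kill it. First, $\sigma(c')$ is a \emph{sign}, not an additive displacement; sums of $\pm 1$'s attached to links of a chain do not telescope, and nothing cancels ``in pairs.'' Second, there are no extreme boundary squares to survive: since each square of $Sq(c)$ is the back (resp.\ forward) square of exactly one cycle in $Cy(c)$ \emph{and} of exactly one cycle in $Cy(\overline{c})$ (this is what makes moving through the pair shape-preserving), the alternating chain you describe closes up into a loop. Around a closed loop the sum of honest diagonal displacements is $0$, so the $\pm 2$ of the proposition can never appear as a boundary term; it is a genuinely global quantity. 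Indeed, for an abstract closed loop of this kind the signed count can perfectly well be $0$ (take the two matchings of back squares to forward squares to be nested the same way); what rules this out is the nesting (non-crossing) structure of cycles inside a single tableau together with the connectedness of the extended cycle, and your proposal never brings either to bear. This is precisely the input the paper uses at the crux of its argument (``if $c^1$ is a down cycle, then $c^2$ is nested in $c^1$, hence $\rho(S_b(\tilde{c}'))=\rho(S_b(c^1))<\rho(S_f(c^2))<i$, so $c^2$ is up and $\tilde{c}'$ is down'').

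Your fallback induction is also not viable as stated: ``peeling off the outermost open cycle together with its partner'' is not a well-defined operation on $\mathcal{T}_D(M_1,M_2)$ --- deleting a cycle does not leave a tableau pair, and a constituent cycle of $c$ does not have a single canonical partner in $\overline{c}$ (it shares one square with each of two cycles of $\overline{c}$ around the loop). The paper instead inducts on $|M_1|$, removing the largest domino $e=\sup M_1$, because that operation \emph{is} well defined and its effect on cycles and extended cycles is completely controlled by the c.s.p.b.'s of Theorem 2.2.3, Proposition 2.2.4, and Proposition 2.3.3 of the earlier papers in the series; the induction then tracks how $n_u$ and $n_d$ change on both sides when $e$ is removed ($+1$ and $+1$ in the balanced cases), with the nesting argument supplying the up/down determinations. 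To salvage your outline you would have to first prove the alternation-along-the-diagonal statement (essentially Proposition 4.2.5 of the paper, which itself requires the same induction on dominoes) and then add a non-crossing-matchings lemma --- at which point you have reproduced the paper's machinery rather than bypassed it.
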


\begin{proof} The proof is by induction on $|M_1|$, the proposition being vacuously true when $|M_1|=0.$ Let $e = \sup M_1$.  If $e \notin c$ then the proposition in true by induction using Theorem 2.2.3 and Proposition 2.2.4, cf. Proposition 2.3.3(b)).  So assume $e\in c$.  The proposition is clear if $c = \{e\}$ so assume $c \neq \{e\}$.  Let $(\T_1',\T_2')=(\T_1,\T_2) - L$.  Then by Proposition 2.3.3(b), $c \setminus \{e\}$ is an extended cycle in $\T_1'$ relative to $\T_2'$.  Set $c' = c \setminus \{e\}$ and let $\overline{c}'$ be the corresponding extended cycle in $\T_2'$ relative to $\T_1'$.  We will assume that $P(e, \T_1)$ is horizontal; an analogous transposed argument works when $P(e, \T_1)$ is vertical.  If $P'(e,T_1)$ is also horizontal, that is, if for some $\tilde{c} \in Cy(c)$, $\tilde{c} \neq \{e\}$, we have $S_b(\tilde{c}) \in P(e,\T_1)$ or $S_f(\tilde{c}) \in P'(e, \T_1)$, then the proposition is clear using the c.s.p.b's of Theorem 2.2.3 (1a) or (2a) and Proposition 2.2.4 part (1a) or (2a).  So assume $P'(e,\T_1)$ is vertical.  Set $P(e, \T_1) = \{ S_{ij}, S_{i,j+1}\}.$  There are two cases:
\begin{gcases}
    \item Here $\{e\} \in Cy(c)$.  Then $P'(e,\T_1) = \{S_{ij}, S_{i+1,j}\}.$  Thus $\{e\}$ is a down cycle, and so by using Proposition 2.2.4 (1b) we have $n_u(c)=n_u(c')$ and $n_d(c) = n_d(c')+1.$  Now by Theorem 2.2.3 (1b) there are cycles $c^1$ and $c^2 \in \OC^*(\T_2)$ and $\tilde{c}' \in \OC^*(\T_2')$ such that $S_b(c^1)=S_b(\tilde{c}')$, $S_f(c^1)=S_{i+1,j}$, $S_b(c^2) = S_{i,j+1}$, and $S_f(c^2) = S_f(\tilde{c}').$  Note that the alternative suggested by Theorem 2.2.3 (1b) is the existence of a cycle $\tilde{c} \in \OC^*(\T_2)$ with $S_b(\tilde{c}) = S_{i+1,j}$ and $S_f(\tilde{c})= S_{i,j+1}$, but then $\{e\}$  is an extended cycle in $\T_1$ relative to $\T_2$, contradicting the assumption that $c \neq \{e\}$.  Let $U = \mu(Cy(\overline{c}) \setminus \{c^1,c^2\})$ where $\mu$ is the c.s.p.b. of Theorem 2.2.3 (1b).  Then $\overline{c}' = (\bigcup_{\tilde{c} \in U} \tilde{c}) \cup \tilde{c}'.$  There are three possibilities.  Suppose first that $c^1$ is a down cycle.  Then $c^2$ is nested in $c^1$ and thus we have $\rho(S_b(\tilde{c}'))=\rho(S_b(c^1))<\rho(S_f(c^2))<i$, so $c^2$ is an up cycle and $\tilde{c}'$ is a down cycle.  Similarly, if $c^2$ is a down cycle, then $c^1$ is an up cycle and $\tilde{c}'$ is a down cycle.  Finally, if both $c^1$ and $c^2$ are up cycles then $\tilde{c}'$ is an up cycle.  In all three cases we have $n_u(\overline{c})= n_u(\overline{c}')+1$ and $n_d(\overline{c}) = n_d(\overline{c}')$.  Thus
        $$n_u(c)+n_u(\overline{c}) = n_u(c')+n_u(\overline{c}')+1 \text{ and } n_d(c)+n_d(\overline{c}) = n_d(c')+n_d(\overline{c}')+1,$$ so the  proposition holds by induction.
    \item Here $\{e\}$ is not a cycle in $\T_1$.  Then $P'(e,\T_1) =\{S_{i-1,j+1},S_{i,j+1}\}$.  By Theorem 2.2.3 (2b), there is a cycle $\tilde{c}' \in \OC^*(\T_2')$ such that $S_b(\tilde{c}') = S_{i-1,j+1}$ and $S_f(\tilde{c}')=S_{ij}$.  In particular $\tilde{c}'$ is a down cycle so the c.s.p.b. of Theorem 2.2.3 (2b) implies $n_u(\overline{c}) = n_u(\overline{c}')$ and $n_d(\overline{c})=n_d(\overline{c}')-1$.  By Proposition 2.2.4 (2b), there are cycles $\tilde{c} \in \OC^*(\T_1)$ and $c^1$, $c^2\in \OC^*(\T_1')$ such that $S_b(c^1)=S_b(\tilde{c})$, $S_f(c^1)=S_{ij}$, $S_b(c^2) = S_{i-1,j+1}$, and $S_f(c^2)=S_f(\tilde{c})$.  We now proceed as in the previous case, finding $n_u(c)=n_u(c')-1$ and $n_d(c)=n_d(c')$.  Thus again the proposition is true by induction.
\end{gcases}

\end{proof}

\begin{definition}\label{definition:upextendedcycle}
Let $(\T_1,\T_2) \in \mathcal{T}_K(M_1,M_2)$ for $K = B$, $C$, or $D$.  Let $c$ be an extended cycle in $\T_1$ relative to $\T_2$ and suppose either $c$ is an closed cycle in $\T_1$ or $Cy(c) \subseteq \OC^*(\T_1)$.  We say that $c$ is an up extended cycle is either $c$ is a closed up cycle in $\T_1$ or $n_u(c)+n_u(\overline{c}) = n_d(c) + n_d(\overline{c}) +2$.  Otherwise, we say that $c$ is a down extended cycle.
\end{definition}

\begin{proposition} \label{proposition:nhmt}
Let $(\T_1,\T_2)$ and $c$ be as in Definition 4.\ref{definition:upextendedcycle}.  Set $(\T_1',\T_2') = \E((\T_1,\T_2),c,L).$
    \begin{tenumerate}
        \item If $c$ is an up extended cycle, then $$n_h((\T_1',\T_2'))=n_h((\T_1,\T_2))+2 \text{ and } n_v((\T_1',\T_2'))= n_v((\T_1,\T_2))-2.$$
        \item If $c$ is a down extended cycle, then $$n_v((\T_1',\T_2'))=n_v((\T_1,\T_2))+2 \text{ and } n_h((\T_1',\T_2'))= n_h((\T_1,\T_2))-2.$$
    \end{tenumerate}
\end{proposition}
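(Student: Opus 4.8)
The plan is to reduce the statement to a count of up and down cycles and then invoke Proposition 4.\ref{proposition:nuextendedcycle}. The operation $\E$ acts by moving through the extended cycle $c$ in $\T_1$ together with the corresponding extended cycle $\overline{c}$ in $\T_2$, and since $(\T_1',\T_2')$ and $(\T_1,\T_2)$ lie in the same $\mathcal{T}_D(M_1,M_2)$, they involve the same number of dominoes. As moving through a cycle only relocates dominoes—never creating or destroying one—the quantity $n_h((\T_1,\T_2)) + n_v((\T_1,\T_2))$ is unchanged by $\E$. Consequently it suffices to compute the change in $n_h$, for then $n_v$ changes by the negative of that amount, and the two displayed equations in (1) and (2) become equivalent to the single assertions that $n_h$ increases by $2$, respectively decreases by $2$.

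First I would record the effect of moving through a single cycle on $n_h$, which should be immediate from the definitions of up and down cycles in \cite{garfinkle1}: moving through an open up cycle increases $n_h$ by $1$ (and decreases $n_v$ by $1$), moving through an open down cycle decreases $n_h$ by $1$, while moving through a closed up cycle increases $n_h$ by $2$ and a closed down cycle decreases it by $2$. Because the cycles comprising an extended cycle are pairwise disjoint as sets of dominoes, and moving through each is a local operation on its own dominoes, these per-cycle changes simply add.

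In the closed case, $c$ is a single closed cycle in $\T_1$ and $\E$ leaves $\T_2$ unchanged, since moving through a closed cycle preserves $\sh(\T_1)$ and the pair remains valid. By Definition 4.\ref{definition:upextendedcycle}, $c$ is then a closed up cycle in case (1) and a closed down cycle in case (2), and the single-cycle computation gives the desired $\pm 2$ change directly. In the open case $Cy(c) \subseteq \OC^*(\T_1)$, moving through the extended cycle moves every open cycle of $Cy(c)$ inside $\T_1$ and every open cycle of $Cy(\overline{c})$ inside $\T_2$, so summing the per-cycle contributions over both tableaux gives
$$\big(n_u(c) - n_d(c)\big) + \big(n_u(\overline{c}) - n_d(\overline{c})\big) = \big(n_u(c)+n_u(\overline{c})\big) - \big(n_d(c)+n_d(\overline{c})\big)$$
for the change in $n_h$. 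By Proposition 4.\ref{proposition:nuextendedcycle} the right-hand side is $+2$ when $c$ is an up extended cycle and $-2$ when $c$ is a down extended cycle, in accordance with Definition 4.\ref{definition:upextendedcycle}. This yields the claimed change in $n_h$, and hence in $n_v$, in both cases.

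The main obstacle is the first step: verifying that moving through a single cycle changes $n_h$ by exactly $\pm 1$ (open) or $\pm 2$ (closed), with the sign dictated by the up/down classification, and confirming that these sign conventions agree with those fixed in Definition 4.\ref{definition:upextendedcycle}. One must also check carefully that, in the open case, the relocation of dominoes effected by $\E$ in $\T_1$ and in $\T_2$ is correctly partitioned among the open cycles of $Cy(c)$ and $Cy(\overline{c})$, so that the additivity of the per-cycle contributions is legitimate; here the hypothesis $Cy(c) \subseteq \OC^*(\T_1)$ and the correspondence between the extended cycles $c$ and $\overline{c}$ underlying Proposition 4.\ref{proposition:nuextendedcycle} are exactly what is needed.
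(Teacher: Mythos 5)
Your proof is correct and takes essentially the same route as the paper's: the closed case is handled by the closed-cycle result (conditions (1) and (2) of Propositions 3.3.10 and 3.3.11 of \cite{garfinkle3}), and the open case combines the per-cycle effect on $n_h$ and $n_v$ with the count supplied by Proposition 4.\ref{proposition:nuextendedcycle}, exactly as the paper does. The only difference is one of bookkeeping: the per-cycle facts you describe as ``immediate from the definitions'' (open up cycle gives $+1$ to $n_h$, open down cycle $-1$, closed cycles $\pm 2$) are precisely the results the paper cites from \cite{garfinkle3} --- conditions (3) and (4) of Propositions 3.3.7 and 3.3.8 --- rather than re-derives, so your acknowledgment that this step is the remaining obstacle is exactly where those citations belong.
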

\begin{proof}
If $c$ is a closed cycle in $\T_1$, then this is given by conditions (1) and (2) of Propositions 3.3.10 and 3.3.11.  If $c$ is a union of open cycles in $\T_1$ with $Cy(c) \subseteq \OC^*(\T_1)$, then this proposition is a consequence of Proposition 4.\ref{proposition:nuextendedcycle} and conditions (3) and (4) of Propositions 3.3.7 and 3.3.8.
\end{proof}

\begin{remark} We have now proved the appropriate analogues of conditions (3) and (4) of Propositions 3.3.7 and 3.3.8 for up and down extended cycles $c$ with $Cy(c) \subseteq \OC^*(\T_1)$. The relevant condition (5) of both propositions is also true via induction.
Conditions (1) and (2) can be modified as below.  We state the result for up extended cycles; analogous statements hold for down extended cycles.
\end{remark}

\begin{proposition}
Let $(\T_1,\T_2) \in \mathcal{T}_K(M_1,M_2)$ for $K = B$, $C$, or $D$ and let $c$ be an up extended cycle in $\T_1$ relative to $\T_2$.  If $\hat{c} \in Cy(c)$, then either $\hat{c}$ is an up cycle or there is an up cycle $\hat{c}^1 \in Cy(c)$ with $\hat{c}$ nested in $\hat{c}^1$.  Let $k = |Cy(c)|$ and label the squares $Sq(c)=\{S_1, \ldots, S_{2k}\}$ so that $\rho(S_1) < \ldots < \rho(S_{2k}),$ or equivalently, $\kappa(S_1) > \ldots > \kappa(S_{2k})$.  Then
    \begin{tenumerate}
        \item  $S_1=S_f(c')$ for some $c' \in Cy(c)$ and $S_{2k}=S_b(c'')$ for some $c'' \in Cy(c)$, and
        \item $S_i = S_f(c')$ for some $c' \in Cy(c)$ if and only if $S_{i+1} = S_b(c'')$ for some $c'' \in Cy(c)$.
    \end{tenumerate}
\end{proposition}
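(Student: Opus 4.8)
The plan is to induct on $|M_1|$, in the manner of the proofs of Propositions 4.\ref{proposition:nuextendedcycle} and 4.\ref{proposition:nhmt}; throughout we are in the case $Cy(c)\subseteq\OC^*(\T_1)$, the closed case being vacuous. The small cases are checked directly: an up extended cycle consisting of a single open cycle must be an up cycle, so $\rho(S_f)<\rho(S_b)$ yields $S_1=S_f$, $S_2=S_b$, and the nesting assertion is vacuous. For the inductive step put $e=\sup M_1$ and $(\T_1',\T_2')=(\T_1,\T_2)-L$. If $e\notin c$, then $c$ persists unchanged as an extended cycle in $(\T_1',\T_2')$ by Proposition 2.3.3(b), with $Sq(c)$ and all of its front/back and up/down data intact, and the statement is immediate from the inductive hypothesis. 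In general, the matched counts recorded in the proof of Proposition 4.\ref{proposition:nuextendedcycle} show that $n_u(c)+n_u(\overline c)-n_d(c)-n_d(\overline c)$ is unaffected by deleting $e$, so $c'=c\setminus\{e\}$ is again an \emph{up} extended cycle and the inductive hypothesis is available for it.

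So assume $e\in c$ and $c\neq\{e\}$, and set $c'=c\setminus\{e\}$, an extended cycle in $\T_1'$ relative to $\T_2'$ by Proposition 2.3.3(b). Arguing as in Proposition 4.\ref{proposition:nuextendedcycle}, and using transposition to assume $P(e,\T_1)$ horizontal, the key structural fact is that $Sq(c)$ and $Sq(c')$ differ by exactly one pair of squares lying in two adjacent positions of the diagonal carrying $Sq(c)$. When $\{e\}$ is itself an open cycle it is a down cycle contributing the adjacent pair $\{S_{i,j+1},S_{i+1,j}\}$, one a back and one a front; when $\{e\}$ lies in a larger $\tilde c\in Cy(c)$, that cycle splits in $\T_1'$ into $c^1,c^2\in Cy(c')$ whose inner ends $S_f(c^1),S_b(c^2)$ are an adjacent pair present in $Sq(c')$ but absent from $Sq(c)$ --- exactly the data furnished by Theorem 2.2.3 and Proposition 2.2.4 in the proof of Proposition 4.\ref{proposition:nuextendedcycle}. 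Because inserting, or deleting, such an adjacent front/back pair at the compatible junction of an alternating string preserves both the alternation of part (2) and the endpoint pattern of part (1), these two assertions for $c$ follow from the inductive hypothesis for $c'$.

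The nesting assertion is transported along the same induction, and it is here that I expect the real work. Deleting a pair cannot create an unenclosed down cycle, so the content lies in the insertion step, where the newly formed down cycle $\{e\}$ must be shown to be nested in an up cycle of $Cy(c')$. The mechanism is that the back-then-front pair of $\{e\}$ is inserted at a junction where (in order of increasing $\rho$) a front is immediately followed by a back, a junction interior to the $\rho$-span of an up cycle $\hat c^1$; the nested-or-disjoint property of open cycles then forces $\hat c^1$ to enclose $\{e\}$, and if the cycle meeting the junction on the left is itself a down cycle one passes instead to the up cycle enclosing it, supplied by the inductive form of the nesting assertion. The main obstacle will be to pin down this placement precisely: namely to verify that the c.s.p.b.'s of Theorem 2.2.3 and Proposition 2.2.4 really do deposit the pair at such a front-then-back junction interior to an up cycle, rather than at a back-then-front junction or at an extreme position --- equivalently, to reconcile the front/back labels and the up/down orientations of the split cycles $c^1,c^2$ with the alternation just established. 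Everything outside this verification is the routine case-by-case matching against the inductive hypothesis.
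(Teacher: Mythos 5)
Your inductive skeleton is exactly the paper's: reduce to $Cy(c)\subseteq \OC^*(\T_1)$, remove $e=\sup M_1$, use the bookkeeping from the proof of Proposition 4.\ref{proposition:nuextendedcycle} to see that $c'=c\setminus\{e\}$ is still an up extended cycle, and observe that $Sq(c)$ and $Sq(c')$ differ by an adjacent back/front pair, so that everything hinges on where that pair sits in the alternating string. But the step you defer as ``the main obstacle'' is the entire mathematical content of the proposition, and you give no argument for it; you only describe what an argument would need to establish. The missing idea, which is what the paper's proof actually supplies, lives on the $\T_2$-side of Case A of Proposition 4.\ref{proposition:nuextendedcycle}: when $e$ is restored, a cycle $\tilde c'\in\OC^*(\T_2')$ splits into $c^1,c^2\in\OC^*(\T_2)$ with $S_f(c^1)=S_{i+1,j}$ and $S_b(c^2)=S_{i,j+1}$, and the three-way analysis in that proof shows that \emph{at least one of $c^1,c^2$ is an up cycle}. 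Say $c^1$ is up. Then $S_b(c^1)=S_b(\tilde c')$ is a square of $Sq(c')$ lying strictly above the inserted pair; the squares of $Sq(c')$ with rows strictly between $i+1$ and $\rho(S_b(c^1))$ are even in number, because they pair off into cycles nested in $c^1$; and since a square of $Sq(c')$ lying in the common shape is a back square simultaneously for the $\T_1'$-cycle and the $\T_2$-cycle through it, $S_b(c^1)=S_b(\hat c^2)$ for some $\hat c^2\in Cy(c')$ and hence, by induction (1)+(2) for $c'$, occupies an even position in the alternating string. Parity then forces the number of squares of $Sq(c')$ below the pair to be odd --- in particular nonzero, with a front square immediately below and a back square immediately above --- which yields both (1) and (2) for $c$; and the nesting claim follows because either $\hat c^2$ is up and nests $\{e\}$, or by induction $\hat c^2$ is nested in an up cycle of $Cy(c')$ which then nests $\{e\}$.

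Without these inputs your proposed mechanism cannot get started: nothing in your sketch rules out the pair landing at an extreme position (breaking (1)) or at a back-then-front junction (breaking (2)), and the enclosing up cycle $\hat c^1$ you invoke has no identified source --- it cannot come from $Cy(c')$ by pure string combinatorics, since the hypothesis ``$c$ is up'' enters only through the $\T_2$-side cycle counts. Neither of the two facts that make the paper's argument work (one of $c^1,c^2$ is up; the front/back labeling of $Sq(c')$ is common to the $\T_1'$- and $\T_2'$-cycle structures) appears anywhere in your proposal, and they are not consequences of the c.s.p.b.'s alone, so the final ``routine verification'' framing understates what is missing. A secondary, smaller gap: your one-line claim that deleting a pair ``cannot create an unenclosed down cycle'' also needs justification, since in the merging case the new cycle $\tilde c$ may be down and one must check that the up cycle supplied by induction for its down half encloses all of $\tilde c$ (via nested-or-disjointness of spans); the paper treats this case as easy, but it is not nothing. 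As written, then, your text is a correct outline of the paper's strategy with its central step left unproved.
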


\begin{proof}
The proof is by induction as for Proposition 4.\ref{proposition:nuextendedcycle}.  The only difficulty arises in  Case A, so let $e$, $(\T_1', \T_2')$, $c^1$, $c^2,$ and $c'$ be as therein.
Assume first that $c^1$ is an up cycle.  Note that there is an even number of squares $S \in Sq(c')$ with $i+1<\rho(S)<\rho(S_b(c^1))$, since any cycle in $\T_2$ with such an $S$ as its forward or back square is nested in $c^1$.
By (2) and induction, these alternate between back and forward squares, so if we pick $S \in Sq(c')$ with $\rho(S)$ maximal given that $\rho(S) \leq \rho(S_b(c^1))$, then there is a $\hat{c}^2 \in Cy(c')$ such that $S=S_b(\hat{c}^2).$
If $\hat{c}^2$ is an up cycle, the $\hat{c}=\{e\}$ is nested in $\hat{c}^2$, otherwise by induction $\hat{c}^2$ is nested in some up cycle in $c'$ which then also nests $\{e\}$.  We see that statement (2) is also satisfied by $c$.  The proof is similar when $c^2$ is an up cycle.  Finally, as in Proposition 4.\ref{proposition:nuextendedcycle}, we must also consider the proof of the transpose of Case A.  This is trivial for (1), and for (2), it is the transposed proof.

\end{proof}


\section{}

\begin{definition}
Let $w \in W'$ or $w\in W"$.  We define $\tau^L(w)$, $\tau^R(w) \subseteq \Pi'$ as in Definition 2.1.3 using $\alpha'$ and $s_1'$ is place of $\alpha_1$ and $s_1$.  We define $D_{\alpha \beta}^L(W')$ and $D_{\alpha \beta}^R(W')$ for any pair of adjacent simple roots $\alpha, \beta \in \Pi'$ as in Definition 2.1.4-1 and $T_{\alpha \beta}^L$ and $T_{\alpha \beta}^R$ as in Definition 2.1.4-2.

\end{definition}

\begin{definition} \label{definition:tau}Let $\gamma \in \mathcal{S}(M_1,M_2)$.
    \begin{tenumerate}
        \item Suppose $\{1,2\} \subseteq M_1$.  Let $k$, $l$, $\epsilon_1$, and $\epsilon_2$, be such that $\{(1,k,\epsilon_1),(2,l,\epsilon_2)\} \subseteq \gamma.$  We say that $\alpha_1' \in \tau^L(\gamma)$ if either both $k>l$ and $\epsilon_1=-1$ or both $k<l$ and $\epsilon_2=-1$;  otherwise we say $\alpha_1' \notin \tau^L(\gamma)$.  We define analogously $\alpha_1' \in \tau^R(\gamma)$ and $\alpha_1' \notin \tau^R(\gamma)$ when $\{1,2\} \subseteq M_2$.
        \item For $i \geq 2$, the statements $\alpha_i \in \tau^L(\gamma)$, $\alpha_i \notin \tau^L(\gamma)$, $\alpha_i \in \tau^R(\gamma)$, and $\alpha_i \notin \tau^R(\gamma)$ are defined as in Definition 2.1.5.
        \item  Suppose that either  $\{1,2,3\} \subseteq M_1$ and $\{\alpha, \beta\} = \{\alpha_1', \alpha_3\}$ or $\{i-1,i,i+1\} \subseteq M_1$ and $\{\alpha, \beta\}= \{ \alpha_i, \alpha_{i+1}\}$.  Then we define $D_{\alpha \beta}^L(\mathcal{S}(M_1,M_2))$ and $D_{\alpha \beta}^R(\mathcal{S}(M_1,M_2))$ as in Definition 2.1.7-1.
        \item If $\{\alpha, \beta\} = \{\alpha_1', \alpha_3\}$ and $\{1,2,3\} \subseteq M_1$ , we define $$T^L_{\alpha \beta}: D_{\alpha \beta}^L(\mathcal{S}(M_1,M_2)) \longrightarrow D_{\beta \alpha}^L(\mathcal{S}(M_1,M_2))$$ by setting $T_{\alpha \beta}(\gamma)= \gamma'$ where
            $$\{\gamma'\} = \{\SC^L(1;\SC^L(2;\In(1,2;\gamma))),\In^L(2,3;\gamma)\} \cap D_{\beta \alpha}^L(\mathcal{S}(M_1,M_2)).$$  As usual, $T_{\alpha \beta}^L$ is a well-defined bijection with inverse $T_{\beta \alpha}^L$.
        \item If $\{i-1,i,i+1\} \subseteq M_1$ and $\{\alpha, \beta\}= \{ \alpha_i, \alpha_{i+1}\}$ with $i \geq 2$, we define
            $$T^L_{\alpha \beta}: D_{\alpha \beta}^L(\mathcal{S}(M_1,M_2)) \longrightarrow D_{\beta \alpha}^L(\mathcal{S}(M_1,M_2))$$
            as in Definition 2.1.7-2.
        \item We define analogously the operators  $$T^L_{\alpha \beta}: D_{\alpha \beta}^R(\mathcal{S}(M_1,M_2)) \longrightarrow D_{\beta \alpha}^R(\mathcal{S}(M_1,M_2)).$$
    \end{tenumerate}
\end{definition}

\begin{remark*}  All of the following properties of the $\tau$-invariant also hold with $R$ in place of $L$:
    \begin{tenumerate}
        \item For $w \in W'$, we have $\tau^L(\delta(w))=\tau^R(w)$ and  $\tau^R(\delta(w))=\tau^L(w)$.  For $w\in  D_{\alpha \beta}^L(W')$ we have $\delta(T_{\alpha \beta}(w)) = T_{\alpha \beta}(\delta(w))$.
        \item If $\gamma \in \mathcal{S}(M_1,M_2)$ and $\{1,2\} \subseteq M_1$, then $\alpha_1' \in \tau^L(\gamma)$ if and only if $\alpha_1' \notin \tau^L(\SC(\gamma))$.
        \item If $\gamma \in D_{\alpha \beta}(\mathcal{S}(M_1,M_2))$, then $\SC(\gamma) \in D_{\beta \alpha}(\mathcal{S}(M_1,M_2))$ and $$T_{\alpha \beta}(\SC(\gamma)) = \SC(T_{\alpha \beta}(\gamma)).$$
    \end{tenumerate}
\end{remark*}

\begin{remark*} As noted in Remark 2.1.5, the notion of inclusion in a $\tau$-invariant for elements of $\mathcal{S}(M_1,M_2)$ is not defined as inclusion in a set, but rather as formal statement.  We remedy this as follows.  Let $M \subset \mathbb{N}^*$ be a finite set.  Define $\tau(M)$ as a subset of $\{\alpha_1'\} \cup \{\alpha_i \; | \; 1 < i < \infty \}$ by
    \begin{tenumerate}
        \item $\alpha_1' \in \tau (M)$ if and only if $\{1,2\} \subseteq M$, and
        \item for $i>1$, $\alpha_i \in \tau(M)$ if and only if $\{i,i+1\} \subseteq M$.
    \end{tenumerate}
We then define, say,  for $\gamma \in \mathcal{S}(M_1,M_2)$, $\tau^L(\gamma) \subseteq \tau(M_1)$, adapting the previous definitions to this context in the obvious way.  Strictly speaking, the $\tau$-invariants of this paper must consequently be distinguished from those
in \cite{garfinkle2} and \cite{garfinkle3}, but we will neglect doing so.
\end{remark*}

\begin{definition} \label{definition:alpha1tau} Let $\T \in \mathcal{T}(M)$ and suppose $\{1,2\} \subseteq M$.  We say that $\alpha_1' \in \tau(\T)$ if either:
    \begin{tenumerate}
        \item $P(1,\T) = \{S_{1,1},S_{2,1}\}$ and $P(2,\T) = \{S_{1,2},S_{2,2}\}$,
        \item $P(1,\T) = \{S_{1,1},S_{1,2}\}$ and $P(2,\T) = \{S_{2,1},S_{3,1}\}$, or
        \item $P(1,\T) = \{S_{1,1},S_{2,1}\}$ and $P(2,\T) = \{S_{3,1},S_{4,1}\}$.
    \end{tenumerate}
Otherwise we say that $\alpha_1' \notin \tau(\T)$.
\end{definition}

\begin{remark*}  With $\T$ as above, we have $\alpha_1' \in \tau(\T)$ if and only if $\alpha_1' \notin \tau(^t\T)$.
\end{remark*}

We will view the map $A$ of Definition 1.2.1 as a bijection from $\mathcal{S}(M_1,M_2)$ to $\mathcal{T}_D(M_1,M_2)$ and write $A(\gamma) = (\LL(\gamma), \R(\gamma))$.  The following is obvious by inspection:
\begin{proposition} \label{proposition:alpha1tauRSK} Suppose $\gamma \in \mathcal{S}(M_1,M_2)$ and $\{1,2\} \subseteq M_1.$  Then $\alpha_1' \in \tau^L(\gamma)$ if and only if $\alpha_1' \in \tau(\LL(\gamma))$ and $\alpha_1' \in \tau^R(\gamma)$ if and only if $\alpha_1' \in \tau(\R(\gamma))$
\end{proposition}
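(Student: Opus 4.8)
The plan is to unwind the construction of $A$ from Definition 1.2.1 for the first two steps only, and to check the resulting configurations against Definition \ref{definition:alpha1tau} case by case. By the left–right symmetry (the $R$-statement is the $\tau^R$/$\R$ transpose of the $L$-statement, with $M_2$ in place of $M_1$), it suffices to prove the $\tau^L$ assertion. The structural fact I would invoke is that $\LL(\gamma)$ is the \emph{recording} tableau for the insertion that processes the triples $(e,g,\epsilon)\in\gamma$ in increasing order of $e\in M_1$: the domino $P(e,\LL(\gamma))$ is precisely the domino created at the $e$-th step, and since a recording tableau only grows and its dominoes never move, $P(1,\LL(\gamma))$ and $P(2,\LL(\gamma))$ are already fixed after two steps. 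Hence they depend only on the triples $(1,k,\epsilon_1)$ and $(2,l,\epsilon_2)$, and in fact only on $\epsilon_1$, $\epsilon_2$, and $\operatorname{sgn}(k-l)$ (recall $k\neq l$). This finiteness is exactly what is meant by ``obvious by inspection.''

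Concretely, the first step inserts $k$ into the empty tableau; by Proposition \ref{proposition:nhnv}, since $n_h=n_v=0$ rules out alternative (b) when $\epsilon_1=+1$ and alternative (a) when $\epsilon_1=-1$, domino $1$ is horizontal $\{S_{1,1},S_{1,2}\}$ exactly when $\epsilon_1=+1$ and vertical $\{S_{1,1},S_{2,1}\}$ exactly when $\epsilon_1=-1$. The second step inserts $l$, and whether it bumps the entry $k$ of domino $1$ is governed by $\operatorname{sgn}(k-l)$. If $k<l$ there is no bump, and domino $2$ is appended on the boundary with the orientation read off from its own sign $\epsilon_2$: to the right when $\epsilon_2=+1$, or below (as $\{S_{2,1},S_{3,1}\}$ or $\{S_{1,2},S_{2,2}\}$, depending on the shape left by domino $1$) when $\epsilon_2=-1$. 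If $k>l$ there is a bump, and the insertion of Definition 1.2.1 forces domino $2$ to inherit the orientation of domino $1$, regardless of $\epsilon_2$: a horizontal domino $1$ sends domino $2$ to $\{S_{2,1},S_{2,2}\}$, and a vertical domino $1$ sends it to $\{S_{3,1},S_{4,1}\}$.

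Collating these outcomes, the pair $\big(P(1),P(2)\big)$ falls into one of the three patterns of Definition \ref{definition:alpha1tau} exactly in the cases $(k>l,\ \epsilon_1=-1)$, which gives pattern (3) for either value of $\epsilon_2$; $(k<l,\ \epsilon_1=+1,\ \epsilon_2=-1)$, which gives pattern (2); and $(k<l,\ \epsilon_1=-1,\ \epsilon_2=-1)$, which gives pattern (1). In every remaining case the pair is one of the excluded configurations (two horizontal dominoes stacked, two horizontal dominoes in a row, or a vertical domino $1$ topped by a horizontal domino $2$). Comparing this list with the condition of Definition \ref{definition:tau}(1), namely $\alpha_1'\in\tau^L(\gamma)$ iff $(k>l\ \wedge\ \epsilon_1=-1)\ \vee\ (k<l\ \wedge\ \epsilon_2=-1)$, yields agreement in all eight sign/order combinations, which proves the claim. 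The single delicate point—everything else being bookkeeping—is the assertion that a bump at the second step makes domino $2$ copy the orientation of domino $1$ irrespective of $\epsilon_2$; this is precisely where one must cite the relevant clause of the insertion rather than naively identify the sign $\epsilon_2$ with an orientation, and it is what causes the four sign/order combinations lying in $\tau^L$ to collapse onto only three tableau patterns.
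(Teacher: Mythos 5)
Your strategy---reduce to the dominos labelled $1$ and $2$, then check the eight combinations of $\operatorname{sgn}(k-l)$, $\epsilon_1$, $\epsilon_2$ against Definitions 4.\ref{definition:tau} and 4.\ref{definition:alpha1tau}---is exactly the inspection the paper has in mind, and your final biconditional is correct. But two things need repair. The lesser one is the structural claim you start from: under Definition 1.2.1 as it is used in this paper, $\LL(\gamma)$ is the \emph{insertion} tableau, not a recording tableau; the elements of $M_1$ are inserted in increasing order of their $M_2$-partners (see the proof of Lemma 4.\ref{lemma:AofSC}, where $\LL(\gamma)$ is produced from $\LL(\overline{\gamma})$ by applying $\alpha(\,\cdot\,,1,\epsilon)$, i.e.\ by inserting the label $1$). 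Your identification of $\LL(\gamma)$ with the recording tableau of the process ordered by $M_1$ is the symmetry theorem for $A$ from \cite{garfinkle1}; either cite it, or avoid it by observing that inserting any value larger than $2$ never disturbs the dominos $1$ and $2$, so that the sub-tableau of $\LL(\gamma)$ on labels $\{1,2\}$ is the insertion tableau of the two-letter word formed by the triples through $1$ and $2$. Either way your eight-case frame survives.

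The more serious problem is that two of your eight outcomes are wrong, namely the two that you match with patterns (1) and (3) of Definition 4.\ref{definition:alpha1tau}: they are interchanged. When $k>l$ and $\epsilon_1=-1$, domino $1$ occupies $\{S_{1,1},S_{2,1}\}$ and the bump pushes domino $2$ into the second column, landing it at $\{S_{1,2},S_{2,2}\}$ (in the recording picture: the shape grows from a $2\times 1$ column to a $2\times 2$ square); this is pattern (1). The position $\{S_{3,1},S_{4,1}\}$ that you assign to this case cannot occur after a bump, since it would require the shape to become a column of four squares. Dually, when $k<l$ and $\epsilon_1=\epsilon_2=-1$ there is no bump, and column insertion appends domino $2$ at the foot of column $1$, i.e.\ at $\{S_{3,1},S_{4,1}\}$: pattern (3), not $\{S_{1,2},S_{2,2}\}$. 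Your conclusion survives only by luck: patterns (1) and (3) are both on the list of Definition 4.\ref{definition:alpha1tau}, so interchanging them does not change membership in $\tau(\LL(\gamma))$. But the claims themselves misdescribe the insertion algorithm---the very thing the proposition asks you to inspect---and the identical slip in a situation where the two configurations were not both in (or both out of) the $\tau$-list would invalidate the argument. With these two cells of the table corrected, your proof is complete and agrees with the paper's intended verification.
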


When $\{\alpha, \beta\} = \{\alpha_i, \alpha_{i+1}\}$ or $\{\alpha_1', \alpha_3\}$, we define the domains $D_{\alpha \beta}^L(\mathcal{T}_D(M_1,M_2))$ and $D_{\alpha \beta}^R(\mathcal{T}_D(M_1,M_2))$
as in Definition 2.1.11.  When $\{\alpha, \beta\} = \{\alpha_i, \alpha_{i+1}\}$, we also adopt its subsequent definitions of $T_{\alpha \beta}^L$ and $T_{\alpha \beta}^R.$  When $\{\alpha, \beta\} = \{\alpha_1', \alpha_3\}$, the definition is more intricate, as below:

\begin{definition} \label{definition:T13} Suppose $\{\alpha, \beta\} =\{\alpha_1', \alpha_3\}$.  We define
$$T_{\alpha \beta}^L: D_{\alpha \beta}^L (\mathcal{T}_D(M_1,M_2)) \longrightarrow D_{\beta \alpha}^L (\mathcal{T}_D(M_1,M_2))$$ via several cases.  Let $(\T_1, \T_2) \in D_{\alpha \beta}^L (\mathcal{T}_D(M_1,M_2)).$
    \begin{tenumerate}
        \item Suppose $S_{2,2} \notin P(2,\T_1) \cup P(3,\T_1)$.  Then we set $T_{\alpha \beta}^L((\T_1,\T_2))= (\In(2,3;\T_1),\T_2).$
        \item Suppose $P(1,\T_1) = \{S_{1,1},S_{2,1}\}$, $P(2,\T_1) = \{S_{1,2},S_{2,2}\}$, and $P(3,\T_1) = \{S_{1,3},S_{1,4}\}$.  Set $(\oT_1,\oT_2)=\EE((\T_1,\T_2), ec(3,T_1;T_2),L).$ Let $\T_1'=(\oT_1 \setminus \{(2,S_{2,2}), (3, S_{1,3})\}) \cup \{(2,S_{1,3}),(3,S_{2,2})\}.$ We then define $$T_{\alpha \beta}^L((\T_1,\T_2))= (\T_1',\oT_2).$$
        \item Suppose $P(1,\T_1) = \{S_{1,1},S_{2,1}\}$, $P(2,\T_1) = \{S_{1,2},S_{2,2}\}$, and $P(3,\T_1) = \{S_{1,3},S_{2,3}\}$.  Set $\T_1'= (\T_1 \setminus \{(2,S_{2,2}),(3,S_{1,3})\}) \cup\{(2,S_{1,3}),(3,S_{2,2})\}.$  We then define $$T_{\alpha \beta}^L((\T_1,\T_2))= \EE((\T_1',\T_2),ec(3,\T_1';T_2),L).$$
        \item In any of the settings not listed among (1)-(3), $T_{\alpha \beta}^L((\T_1,\T_2))$ is defined by the rules $T_{\alpha \beta}^L= (T_{\beta \alpha}^L)^{-1}$ and $^{t}(T_{\alpha \beta}^L((\T_1,\T_2))) = T_{\beta \alpha}^L(^{t}(\T_1,\T_2))$.
    \end{tenumerate}
The operators $T_{\alpha \beta}^R$ are defined an analogous manner.
\end{definition}

\begin{remark*}  An alternate description of $T_{\alpha \beta}((\T_1,\T_2))$ in case (2) can be given as follows.  Let
\begin{align*}
H_1 & = \{( 1, S_{1,1}), ( 1, S_{2,1}), ( 2, S_{1,2}),  (2, S_{2,2}), ( 3, S_{1,3}), ( 3, S_{1,4})\} \text{ \hspace{.1in} and } \\
 H_2 & = \{( 1, S_{1,1}), ( 1, S_{1,2}), ( 2, S_{1,3}),  ( 2, S_{1,4}), ( 3, S_{2,1}), ( 3, S_{2,2})\}.
\end{align*}
Set $\tT_1=(\T_1 \setminus H_1) \cup H_2.$ Then
$T_{\alpha \beta}^L((\T_1,\T_2)) = \EE((\tT_1,\T_2),ec(3,\tT_1;\T_2),L).$  Similarly, in case (3), we set $(\oT_1,\oT_2) = \EE((\T_1,\T_2), ec(3,\T_1;\T_2),L)$.  Then
$T_{\alpha \beta}^L((\T_1,\T_2)) = ((\oT_1 \setminus H_1)\cup H_2, \oT_2).$
\end{remark*}

\begin{proposition}  Let $(\T_1,\T_2) \in D_{\alpha \beta}^L(\mathcal{T}_D(M_1,M_2)).$  We have $$(\T_1,\T_2) \in \mathcal{T}'(M_1,M_2)\text{\hspace{.1in} if and only if \hspace{.1in} } T_\ab^L(\T_1,\T_2) \in \mathcal{T}'(M_1,M_2).$$  The corresponding statement holds when $L$ is replaced by $R$.
\end{proposition}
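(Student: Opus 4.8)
The plan is to reduce everything to the behavior of $n_v$ modulo $4$. By definition $(\T_1,\T_2)\in\mathcal T'(M_1,M_2)$ exactly when $n_v((\T_1,\T_2))\equiv 0\pmod 4$, while membership in $\mathcal T''(M_1,M_2)$ means $n_v((\T_1,\T_2))\equiv 2\pmod 4$; since $n_v$ is always even (the first corollary following Proposition~4.\ref{proposition:nhnv}), it suffices to prove that $T_\ab^L$ changes $n_v((\T_1,\T_2))$ by a multiple of $4$. I would verify this following the case division of Definition~4.\ref{definition:T13}, together with the separately defined operators for $\{\alpha,\beta\}=\{\alpha_i,\alpha_{i+1}\}$. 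The organizing observation is that each defining recipe is a composition of elementary moves, each changing $n_v$ by $0$ or by $\pm2$, so the whole point is to check that the number of moves of the second kind is \emph{even} in every case.

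The heart of the matter is cases (2) and (3) for $\{\alpha,\beta\}=\{\alpha_1',\alpha_3\}$, and here I would use the $H_1,H_2$ description in the remark after Definition~4.\ref{definition:T13}. In both cases $T_\ab^L$ is the composition of a single extended-cycle move $\EE(\,\cdot\,,ec(3,\cdot\,;\T_2),L)$ with a single local replacement of $H_1$ by $H_2$ (or vice versa). The replacement changes the number of vertical dominoes by exactly $\pm2$, since $H_1$ and $H_2$ occupy the same six cells while $H_1$ contains two vertical dominoes and $H_2$ none. After checking that $ec(3,\cdot\,;\T_2)$ meets the hypotheses of Proposition~4.\ref{proposition:nhmt} (that it is an up or down extended cycle with $Cy(\cdot)\subseteq\OC^*$), that proposition shows the $\EE$-move changes $n_v$ by exactly $\pm2$ as well. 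Two moves of size $\pm2$ change $n_v$ by $0$ or $\pm4$, hence by a multiple of $4$, which is what we want. Case (1) alters only $\T_1$, through $\In(2,3;\T_1)$, and I would check directly from the hypothesis $S_{2,2}\notin P(2,\T_1)\cup P(3,\T_1)$ that this interchange leaves $n_v$ unchanged.

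For the operators attached to $\{\alpha_i,\alpha_{i+1}\}$ with $i\geq 2$, which are defined exactly as in type $C$, no corner is involved and the operator is realized by interchanges that preserve $n_v$; preservation of $\mathcal T'$ versus $\mathcal T''$ is then immediate, and in any case is inherited from \cite{garfinkle3}. Finally, case (4) of Definition~4.\ref{definition:T13} is reduced to cases (1)--(3) using its two defining relations. The relation $T_\ab^L=(T_\ba^L)^{-1}$ transfers the ``iff'' from a bijection to its inverse, so $\mathcal T'$ is preserved as soon as it is preserved by $T_\ba^L$. For the relation ${}^t(T_\ab^L(\T_1,\T_2))=T_\ba^L({}^t(\T_1,\T_2))$, I would use that transposition sends $n_v$ to $n_h=N-n_v$, where $N=|M_1|+|M_2|$ is the total number of dominoes: the two transpositions each replace $n_v$ by $N-n_v$ and therefore cancel modulo $4$, while the intervening $T_\ba^L$ preserves $n_v\bmod 4$ by the earlier cases; hence $n_v((\T_1,\T_2))\bmod 4$ is again preserved.

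I expect the main obstacle to be the bookkeeping in cases (2) and (3): one must confirm that the relevant extended cycle really does satisfy the hypotheses of Proposition~4.\ref{proposition:nhmt}, so that the $\EE$-move contributes exactly $\pm2$ (rather than $0$ or $\pm4$) to $n_v$, and one must track the cells of $H_1$ and $H_2$ carefully enough to be sure the local replacement contributes $\pm2$ and not $0$. The conceptual content, by contrast, is light: it is simply the parity count showing that an even number of $(\pm2)$-moves occurs in each branch of the definition.
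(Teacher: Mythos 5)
Your proposal is correct and is essentially the paper's own argument in expanded form: the paper's proof consists of exactly the two observations you develop, namely that the case $\{\alpha,\beta\}=\{\alpha_i,\alpha_{i+1}\}$ with $i\geq 2$ is clear from the definitions, and that the case $\{\alpha,\beta\}=\{\alpha_1',\alpha_3\}$ follows from Proposition 4.\ref{proposition:nhmt}. Your mod-$4$ bookkeeping (the $\pm 2$ from the $H_1$/$H_2$ replacement combined with the $\pm 2$ from the extended-cycle move, and the reduction of case (4) via inversion and transposition) is precisely the content that the paper's two-sentence proof compresses.
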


    \begin{proof}  If $\{\alpha, \beta\} = \{\alpha_i, \alpha_{i+1}\}$ for some $i \geq 2$, then this is clear from the definitions.  If on the other hand $\{\alpha, \beta\} = \{\alpha_1', \alpha_3\}$, we also can use Proposition 4.\ref{proposition:nhmt}.
    \end{proof}

\begin{lemma}\label{lemma:AofSC}  Let $\gamma \in \mathcal{S}(M_1,M_2)$ and suppose that $1 \in M_1$.  Let $\gamma'=\SC^L(1;\gamma)$.  Then
$$A(\gamma') = \EE(A(\gamma),ec(1,\LL(\gamma);\R(\gamma)),L).$$
\end{lemma}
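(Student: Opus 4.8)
The plan is to argue by induction on $|M_1|$, in the manner of Propositions 4.\ref{proposition:nhnv} and 4.\ref{proposition:nuextendedcycle}. For the base case $M_1=\{1\}$, the symbol $\gamma$ carries a single left entry, a triple $(1,g,\epsilon)$, and $\SC^L(1;\gamma)$ simply reverses $\epsilon$. Under $A$ the domino of $\LL(\gamma)$ labeled $1$ is horizontal or vertical according to $\epsilon$, and $ec(1,\LL(\gamma);\R(\gamma))$ is the open cycle consisting of this single domino. Moving through it with parameter $L$ toggles the $\LL$-domino between its horizontal and vertical positions and adjusts the paired domino of $\R(\gamma)$ so that the shapes still agree; a direct inspection identifies the outcome with $A(\SC^L(1;\gamma))$, settling the base case.

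For the inductive step, set $e=\sup M_1$. Since $1\in M_1$ and $|M_1|\ge 2$ we have $e>1$, so the corner domino of $\LL(\gamma)$ labeled $e$ is removable and its deletion, together with that of the matched corner of $\R(\gamma)$, leaves position $1$ untouched. Writing $\gamma-e$ for the symbol obtained by deleting the triple indexed by $e$, we have $\SC^L(1;\gamma-e)=(\SC^L(1;\gamma))-e=\gamma'-e$, i.e. the sign change at $1$ commutes with removal of the top label. The inductive hypothesis then gives
$$A(\SC^L(1;\gamma-e)) = \EE\left(A(\gamma-e),\, ec(1,\LL(\gamma-e);\R(\gamma-e)),\, L\right).$$

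It remains to reinsert $e$ on both sides. Let $\iota_e$ denote the forward domino insertion that inverts the removal above (the same insertion data on both sides, since $\SC^L(1;\cdot)$ leaves the $e$-triple unchanged), so that $A(\gamma)=\iota_e(A(\gamma-e))$ and $A(\gamma')=\iota_e(A(\gamma'-e))$. Substituting the inductive identity into the latter, the assertion of the lemma reduces to the commutation
$$\iota_e\left(\EE\left(A(\gamma-e),\, ec(1,\LL(\gamma-e);\R(\gamma-e)),\, L\right)\right) = \EE\left(\iota_e(A(\gamma-e)),\, ec(1,\LL(\gamma);\R(\gamma)),\, L\right),$$
that is, inserting the maximal domino $P(e,\LL(\gamma))$ commutes with moving through the extended cycle of the domino labeled $1$, provided the cycle $ec(1,\LL(\gamma-e);\R(\gamma-e))$ corresponds to $ec(1,\LL(\gamma);\R(\gamma))$ under the insertion.

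I expect this commutation to be the main obstacle. The plan is to establish it by a case analysis parallel to the four cases of Proposition 4.\ref{proposition:nhnv} and to Cases A and B of Proposition 4.\ref{proposition:nuextendedcycle}, according to whether $P(e,\LL(\gamma))$ is horizontal or vertical and whether the insertion path of $e$ meets the extended cycle through $1$. The key inputs are the cycle-structure-preserving bijections of Theorem 2.2.3 and Proposition 2.2.4 together with Proposition 2.3.3(b), which control how extended cycles restrict under deletion of the top label and guarantee that $ec(1,\LL(\gamma-e);\R(\gamma-e))$ is carried to $ec(1,\LL(\gamma);\R(\gamma))$ by $\iota_e$. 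The delicate subcase, analogous to Case A of Proposition 4.\ref{proposition:nuextendedcycle}, is when the domino created by inserting $e$ is absorbed into the extended cycle through $1$; there one must check that moving through the enlarged cycle and then recognizing the new domino agrees with first moving through the smaller cycle and then inserting. Once each case is verified, the two displayed expressions coincide and the induction closes.
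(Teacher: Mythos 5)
Your inductive skeleton---strip the triple with first coordinate $e=\sup M_1$, apply the inductive hypothesis, then re-insert---is formally consistent, but it concentrates the whole content of the lemma into the displayed commutation of $\iota_e$ with $\EE(\,\cdot\,,ec(1,\,\cdot\,;\,\cdot\,),L)$, and that commutation is left as a sketch. The tools you name are not adequate for it: Theorem 2.2.3, Proposition 2.2.4, and Proposition 2.3.3(b) describe how cycles of the two tableaux of a pair correspond and how extended cycles restrict when the largest left domino is removed, but they say nothing about how the bumping path of a forward domino insertion is altered when the pair it acts on has first been moved through an extended cycle; that is exactly what your self-identified ``delicate subcase'' (the new domino being absorbed into the extended cycle through $1$) demands, and proving it is essentially as hard as the lemma itself. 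There is also a factual slip in your setup: deleting the $e$-triple from $\gamma$ does delete the corner domino labeled $e$ from $\LL(\gamma)$, but its effect on $\R(\gamma)$ is a reverse insertion (un-bumping) issuing from the position $P(e,\LL(\gamma))$, not the deletion of a ``matched corner''; hence $\R(\gamma-e)$ may differ from $\R(\gamma)$ in many positions, and since extended cycles depend on both tableaux, the extended cycle through $1$ is by no means ``untouched'' by the removal.

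The paper's proof is organized around a different pivot, one your induction never isolates. Writing $(1,k,\epsilon)\in\gamma$, it first treats the case where $k$ is maximal, i.e.\ the triple containing $1$ is inserted last: there a direct inspection of the insertion algorithm shows that inserting $(1,+1)$ and $(1,-1)$ into the same tableau $\T=\LL(\overline{\gamma})$ produce tableaux differing only in the dominoes labeled $1$ and $l$, where $l$ is the label occupying $S_{1,1}$ in $\T$; these two dominoes tile the block $\{S_{1,1},S_{1,2},S_{2,1},S_{2,2}\}$ in its two possible ways, so $c(1,\LL(\gamma))$ is a closed two-element cycle, $\LL(\gamma')=\EE(\LL(\gamma),c(1,\LL(\gamma)))$, and $\R(\gamma')=\R(\gamma)$ because the shape does not change. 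The general case is then reduced to this pivot by repeated application of Proposition 2.3.2(b) of part III, which is precisely the insertion-versus-moving-through commutation that your plan would have to manufacture from scratch. So the one genuinely new computation in the paper (the closed $2$-cycle) is something your argument never performs, and the burden your argument does carry (the commutation) is discharged in the paper by citation. To repair your proposal you would either have to prove your commutation in full, including the case where the inserted domino joins the cycle---in effect re-deriving 2.3.2(b)---or restructure the induction as the paper does.
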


    \begin{proof} Let $k$ and $\epsilon$ be such that $(1,k,\epsilon) \in \gamma$.  Assume first that $k=\sup M_1$.  Let $\overline{\gamma} = \gamma \setminus \{(1,k,\epsilon)\}$, so that $\gamma'= \overline{\gamma} \cup \{(1,k,-\epsilon\}$.  If $\overline{\gamma} = \varnothing$, then the lemma is obvious, so assume otherwise.

    Consider a nonempty $\T  \in \mathcal{T}(M)$ for some $M$, with $1 \notin M$.  If we set $(\T_1,P_1) = \alpha((\T,1,1))$ and $(\T_2,P_2) = \alpha((\T,1,-1))$, and if $l \in M$ is such that $(l, S_{1,1}) \in \T$, then we have $P(1,\T_1) = \{S_{1,1},S_{1,2}\}$, $P(l,\T_1) = \{S_{2,1},S_{2,2}\}$, $P(1,\T_2) = \{S_{1,1},S_{2,1}\}$, and $P(l,\T_2) = \{S_{1,2},S_{1,2}\}$. For $r \in M \setminus \{l\}$ we have $P(r,\T_1)=P(r,\T_2)$, and thus in particular $P_1=P_2$.  Applying this observation with $\LL(\overline{\gamma})$ substituted for $\T$, we find that $c(1,\LL(\gamma))$ is a closed cycle consisting of two elements, that $\LL(\gamma')= \EE(\LL(\gamma),c(1,\LL(\gamma))),$ and that $\R(\gamma')=\R(\gamma)$, as desired.

    Now suppose that $k \neq \sup M_1$.  Then the lemma follows from the previous case by repeated application of Proposition 2.3.2 (b).

    \end{proof}

\begin{theorem}\label{theorem:AcommutesT}
Let $\{\alpha, \beta\} = \{\alpha'_1, \alpha_3\} \subseteq \Pi'$.
    \begin{tenumerate}
        \item   Suppose $\gamma \in D_\ab^L(\mathcal{S}(M_1,M_2)).$ Then $A(T_\ab^L(\gamma))= T_\ab^L(A(\gamma))$.
        \item Suppose $\gamma \in D_\ab^R(\mathcal{S}(M_1,M_2)).$ Then $A(T_\ab^R(\gamma))= T_\ab^R(A(\gamma))$.
    \end{tenumerate}

\end{theorem}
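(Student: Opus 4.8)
The plan is to prove statement (1), from which (2) follows by the symmetry interchanging the two tableaux together with the roles of $L$ and $R$ (equivalently, by applying $\delta$, which by the remark following Definition 4.\ref{definition:tau} exchanges $\tau^L$ with $\tau^R$ and $T_\ab^L$ with $T_\ab^R$). The first step is to cut down the configurations that must be examined. On the $\mathcal{S}$-side the operator satisfies $T_\ab^L=(T_\ba^L)^{-1}$ and commutes with the sign-change involution $\SC$, while the tableau-side operator of Definition 4.\ref{definition:T13} obeys the matching relations built into its case (4), namely $T_\ab^L=(T_\ba^L)^{-1}$ and ${}^t(T_\ab^L(\T_1,\T_2)) = T_\ba^L({}^t(\T_1,\T_2))$. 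Since $A$ is a bijection that intertwines $\SC$ with transposition, and since the configurations of case (4) are exactly the transposes of those in (1)--(3), granting these symmetries it suffices to prove $A(T_\ab^L(\gamma)) = T_\ab^L(A(\gamma))$ for those $\gamma$ with $A(\gamma)=(\LL(\gamma),\R(\gamma))$ in one of the three explicit configurations (1), (2), (3); the case (4) inputs then follow formally.

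Which configuration occurs, and correspondingly which of the two candidates in Definition 4.\ref{definition:tau}(4) is $T_\ab^L(\gamma)$, is detected by the positions of the dominoes $1,2,3$ in $\LL(\gamma)$; by Proposition 4.\ref{proposition:alpha1tauRSK} and the $\tau$-invariant conventions this is precisely the data separating the cases, and a short computation of membership in $D_\ba^L$ selects the correct branch. In the generic configuration (1), where $S_{2,2}\notin P(2,\T_1)\cup P(3,\T_1)$, I would check that the branch lying in $D_\ba^L(\mathcal{S}(M_1,M_2))$ is $\In^L(2,3;\gamma)$, and that $A$ carries $\In^L(2,3;\cdot)$ to the domino inclusion $\In(2,3;\cdot)$ on the left tableau while fixing the right, so that $A(T_\ab^L(\gamma))=(\In(2,3;\LL(\gamma)),\R(\gamma))$, matching Definition 4.\ref{definition:T13}(1) directly.

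The substance lies in configurations (2) and (3), where the selected branch is $\SC^L(1;\SC^L(2;\In(1,2;\gamma)))$, the combinatorial effect of multiplication by $s_1'$. Here I would compute $A(\In(1,2;\gamma))$ from the definition of $A$ and then apply Lemma 4.\ref{lemma:AofSC} twice, once for $\SC^L(2;\cdot)$ and once for $\SC^L(1;\cdot)$ (the index-$2$ version being proved exactly as the stated lemma), converting each sign change into a move through an extended cycle. The goal is to show that the composite of these two extended-cycle moves reproduces the prescription of Definition 4.\ref{definition:T13}: a single passage through $ec(3,\T_1;\T_2)$ together with the rearrangement of the dominoes $1,2,3$ from the shape $H_1$ to $H_2$ (or its reverse), as in the alternate description. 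I expect this verification to be the main obstacle. One must identify the two successive $\SC$-induced extended cycles in the intermediate tableaux, confirm that they combine to the single extended cycle through domino $3$ together with the $H_1\leftrightarrow H_2$ swap, and check that the up/down type of each extended cycle is consistent, so that each $\EE$ acts in the asserted direction and the $\mathcal{T}'/\mathcal{T}''$ distinction is preserved. The necessary bookkeeping is supplied by Propositions 4.\ref{proposition:nuextendedcycle} and 4.\ref{proposition:nhmt}, which control $n_u$, $n_d$ for extended cycles and the effect of $\EE$ on $n_h$ and $n_v$; these rule out the unwanted alternative cycle structures, in the same spirit as the two-valued operators of types $B$ and $C$.
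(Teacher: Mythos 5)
Your reductions (to part (1), to one of the two branches, and via $\SC$/transposition to the non-transposed configurations) and your treatment of the generic configuration (1) all agree with the paper's proof. The gap is exactly at the point you flag as the main obstacle: your plan for configurations (2) and (3) rests on an ``index-$2$ version'' of Lemma 4.\ref{lemma:AofSC}, and no such lemma exists; it is not ``proved exactly as the stated lemma,'' it is false. The lemma for the label $1$ is special because $1$ is the minimal label: its domino sits at the corner, its sign merely toggles the orientation there, and the rest of the insertion is sign-independent, which is what makes $\SC^L(1;\cdot)$ a move through $ec(1,\LL(\gamma);\R(\gamma))$. For the label $2$ this fails. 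Take $M_1=M_2=\{1,2\}$ and $\eta=\{(2,1,1),(1,2,1)\}$: then $\LL(\eta)$ has $1$ in $\{S_{1,1},S_{1,2}\}$ and $2$ in $\{S_{2,1},S_{2,2}\}$, and $\R(\eta)$ is the all-horizontal tableau of the same shape. Computing $A(\SC^L(2;\eta))$ directly from the insertion rules, the left tableau is unchanged while the right tableau becomes the all-vertical one. No move through an extended cycle in the left tableau relative to the right can produce this: such a move necessarily changes the position of every domino of $\LL(\eta)$ lying in the (nonempty) cycle, yet here $\LL$ is fixed and only $\R$ moves. Consequently the first of your two proposed cycle moves does not exist, and the composition argument (``the two successive $\SC$-induced extended cycles combine to the single extended cycle through domino $3$ together with the $H_1\leftrightarrow H_2$ swap'') cannot be carried out as stated.

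The paper's proof circumvents this precisely by never applying a cycle lemma to the inner sign change. In each of its cases it introduces an intermediate $\gamma_1$ with two properties: $\gamma'=\SC^L(1;\gamma_1)$, and $A(\gamma_1)$ is computed from $A(\gamma)$ by \emph{direct inspection} --- in these configurations the passage $A(\gamma)\mapsto A(\gamma_1)$ is exactly the local rearrangement of dominos $2$ and $3$ in the left tableau (the swap $\{(2,S_{2,2}),(3,S_{1,3})\}\mapsto\{(2,S_{1,3}),(3,S_{2,2})\}$, resp.\ the $H_1\leftrightarrow H_2$ replacement), with $\R(\gamma_1)=\R(\gamma)$. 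Only then is Lemma 4.\ref{lemma:AofSC} invoked, once, for the outer sign change, giving $A(\gamma')=\EE(A(\gamma_1),ec(1,\LL(\gamma_1);\R(\gamma_1)),L)$, and the identification $c(1,\LL(\gamma_1))=c(3,\LL(\gamma_1))$ matches this with cases (2)--(3) of Definition 4.\ref{definition:T13} and its subsequent remark. So the single-cycle move and the $H_1\leftrightarrow H_2$ swap are produced by the two steps separately, not extracted from a composition of two cycle moves. (Your appeal to Propositions 4.\ref{proposition:nuextendedcycle} and 4.\ref{proposition:nhmt} is also not needed for this theorem; in the paper those serve the parity statement proved afterwards, not the commutation with $A$.)
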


\begin{remark*} When $\{\alpha, \beta\} = \{\alpha_i, \alpha_{i+1}\}$ with $i \geq 2$, the corresponding theorem was proved as Theorem 2.1.19
\end{remark*}

\noindent
{\it Proof of Theorem 4.\ref{theorem:AcommutesT}.}

As usual, it suffices to prove the first part.  Since $T_\ab^L$ and $T_\ba^L$ are inverses, we may assume that $\alpha=\alpha_3$.  For $j \in M_1$, define $k_j$ and $\epsilon_j$ by $(j,k_j,\epsilon_j) \in \gamma$.  Set $\overline{\gamma} = \gamma \setminus \{(j,k_j,\epsilon_j) \; | \; j \in \{1,2,3\}\}$.  Let $d<e<f$ be such that $\{d,e,f\}=\{k_1,k_2,k_3\}.$  Set $\gamma' = T_\ab^L(\gamma)$.  Then one of the following ten cases holds:
    \begin{tenumerate}
        \item $\gamma \supseteq \{(1,f,-1),(2,e,-1),(3,d,\epsilon)\}$ and $ \gamma' = \overline{\gamma} \cup \{(1,e,1),(2,f,1),(3,d,\epsilon)\}.$
        \item $\gamma \supseteq \{(1,e,-1),(2,d,\epsilon),(3,f,1)\}$ and $ \gamma' = \overline{\gamma} \cup \{(1,e,-1),(2,f,1),(3,d,\epsilon)\}.$
        \item $\gamma \supseteq \{(1,f,-1),(2,d,-1),(3,e,1)\}$ and $ \gamma' = \overline{\gamma} \cup \{(1,d,1),(2,f,1),(3,e,1)\}.$
        \item $\gamma \supseteq \{(1,f,-1),(2,d,1),(3,e,1)\}$ and $ \gamma' = \overline{\gamma} \cup \{(1,d,-1),(2,f,1),(3,e,1)\}.$
        \item $\gamma \supseteq \{(1,d,\epsilon),(2,f,-1),(3,e,1)\}$ and $ \gamma' = \overline{\gamma} \cup \{(1,d,\epsilon),(2,e,1),(3,f,-1)\}.$
    \end{tenumerate}
Cases (6)-(10) are obtained from (1)-(5) by replacing $\gamma$ with $\SC(\gamma')$.  By Remarks 2.1.6-2 and 4.3.2-2 as well as Definition 4.\ref{definition:T13}-4, we need only consider cases (1)-(5).

In case (1), set $\gamma_1 = \overline{\gamma}\cup \{(1,e,-1),(2,f,1),(3,d,\epsilon)\}$.  We observe directly that $A(\gamma)$ satisfies the hypothesis of case (3) of Definition 4.\ref{definition:T13}, that $\LL(\gamma_1)=(\LL(\gamma) \setminus \{(2,S_{2,2}),(3,S_{1,3})\}) \cup \{(2,S_{1,3}),(3,S_{2,2})\}$, and that $\R(\gamma_1)= \R(\gamma)$.  On the other hand, Lemma 4.\ref{lemma:AofSC} says that
$$A(\gamma') = \EE(A(\gamma_1),ec(1,\LL(\gamma_1); \R(\gamma_1)),L).$$  Since $c(3, \LL(\gamma_1)) = c(1, \LL(\gamma_1))$, case (1) is complete.

In case (2), set $\gamma_1 = \overline{\gamma}\cup \{(1,e,1),(2,f,1),(3,d,\epsilon)\}$.  Adopting the notation from Remark 4.\ref{definition:T13}, we note that $H_1 \subseteq \LL(\gamma)$, $\LL(\gamma_1) = (\LL(\gamma) \setminus H_1) \cup H_2$, and that $\R(\gamma_1) = \R(\gamma)$.  As in case (1), we have $A(\gamma') = \EE(A(\gamma_1),ec(3,\LL(\gamma_1); \R(\gamma_1)),L)$ and the rest of the case follows by Remark 4.\ref{definition:T13}.

The proof of case (3) follows the outline of case (1), provided that we let $\gamma_1 = \overline{\gamma}\cup \{(1,d,-1),(2,f,1),(3,e,1)\}$, and case (4) follows as  case (2), letting $\gamma_1 = \overline{\gamma}\cup \{(1,d,1),(2,f,1),(3,e,1)\}$.  In case (5), note that $A(\gamma)$ falls within case (1) of Definition 4.\ref{definition:T13}, $\LL(\gamma') = \In(2,3;\LL(\gamma))$, and $\R(\gamma')= \R(\gamma)$.

\qed

Analogously to Proposition 3.1.2 we have

\begin{lemma}\label{lemma:439} For any tableau $\T$ in the domain of $T_{\alpha_1'\alpha_3}$, there is a c.s.p.b. between $\T$ and $T_{\alpha_1'\alpha_3}\T$.
\end{lemma}



\section{}

We now define a family of operators that extend the generalized $\tau$-invariant to the type $D$ setting.   They were originally introduced in \cite{garfinkle:vogan} in the context of primitive ideals, but the construction therein can be replicated on any set where one has defined a $\tau$-invariant and operators $T_\ab$ based on a branched Dynkin diagram.

The first part of this section describes the action of the operators in the settings of signed permutations and pairs of domino tableaux.  The second shows that these two definitions are compatible by verifying operators on signed permutations and domino tableaux commute with the domino Robinson-Schensted map $A$.    Since in this paper most of the sets in question have left and right $\tau$-invariants as well as left and right operators $T_\ab$, we will similarly speak of objects introduced here as being of left or right type.  We begin by describing sets which will become their domains.

\begin{definition} \label{definition:varioustypes} Suppose either $X=W'$ or $X=W"$ or $X = \mathcal{S}(M_1,M_2)$ with $\{1,2,3,4\} \subseteq M_1$ or $X=\mathcal{T}_D(M_1,M_2)$ with $\{1,2,3,4\} \subseteq M_1$.  Let  $x \in X$ and suppose $\{\beta, \gamma, \delta\} \in \Pi'$ with $\{\beta, \gamma, \delta\} =\{\alpha_1', \alpha_2, \alpha_4\}$.
    \begin{tenumerate}
        \item We say $x$ is of left type $\Ab$ if $\{\beta, \alpha_3\} \subseteq \tau^L(x)$, $\{\gamma, \delta\} \cap \tau^L(x) = \varnothing$, and $T_{\gamma \alpha_3}^L(x) \neq T_{\delta \alpha_3}^L(x).$
        \item We say $x$ is of left type $\Bb$ if $\{\gamma, \delta\} \subseteq \tau^L(x)$, $\{\beta, \alpha_3\} \cap \tau^L(x) = \varnothing$, and $T_{\alpha_3 \gamma}^L(x) \neq T_{\alpha_3 \delta}^L(x).$
        \item We say $x$ is of left type $\mathcal{C}$ if $\{\beta, \gamma, \delta\} \subseteq \tau^L(x)$, $\alpha_3 \notin \tau^L(x)$, and $T_{\alpha_3 \beta}^L(x) =T_{ \alpha_3 \gamma}^L(x) =T_{ \alpha_3 \delta}^L(x). $
        \item We say $x$ is of left type $\mathcal{D}$ if $\alpha_3 \in \tau^L(x)$, $\{\beta, \gamma, \delta\} \cap \tau^L(x) = \varnothing$, and $T_{\beta \alpha_3}^L(x) =T_{\gamma \alpha_3}^L(x) =T_{\delta \alpha_3}^L(x). $
        \item If $x$ is of left type $\Ab$ or $\Bb$ for some $\beta \in \{\alpha_1', \alpha_2, \alpha_4\}$, or it is of left type $\mathcal{C}$ or $\mathcal{D}$, we say that $x$ is of left type $8$.
        \item Suppose that $\{1,2,3,4\} \subseteq M_2$. We define analogously right types $\Ab$, $\Bb$, $\mathcal{C}$, $\mathcal{D}$, and $8$.
    \end{tenumerate}

\end{definition}

\begin{remark*}
   \begin{tenumerate}
        \item  We note that $x$ is of left type $\mathcal{C}$ if and only if $T^L_{\alpha_3 \beta}(x)$ is of left type $\mathcal{D}$.  Also, $x$ is of left type $\Ab$ if and only if $T^L_{\gamma \alpha_3}(x)$ is of left type $\mathcal{B}_\delta$.  Similarly with right in place of left.
        \item If $w \in W'$ or $w\in W"$, then $w$ is of left (resp. right) type $\Ab$ if and only if $\delta(w)$ is of right (resp. left) type $\Ab$, and similarly for the other types.  This follows from Remarks 2.1.7-1 and  4.\ref{definition:tau}-1.
        \item If $\gamma \in \mathcal{S}(M_1,M_2)$, then $\gamma$ is of left (resp. right) type $\Ab$ if and only if $A(\gamma)$ is of left (resp. right) type $\Ab$, and similarly for the other types.  This follows from Propositions 2.1.18, 4.\ref{proposition:alpha1tauRSK}, and Theorems 2.1.19 and 4.\ref{theorem:AcommutesT}.
   \end{tenumerate}
\end{remark*}

In \cite{garfinkle:vogan}, we grouped type 8 modules into sets of ten or fourteen.  Each set consists of either one module each of type $\mathcal{A}_\beta$ and  $\mathcal{B}_{\beta}$  for $\beta \in \{\alpha_1', \alpha_2, \alpha_4\}$ and two each of types $\mathcal{C}$ and $\mathcal{D}$, or two modules each of the types $\mathcal{A}_\beta$ and  $\mathcal{B}_{\beta}$  for $\beta\in \{\alpha_1', \alpha_2, \alpha_4\}$, one module of type $\mathcal{C}$, and one module of type $\mathcal{D}$.  We will implement this grouping by means of some operators, which, to each type $\mathcal{C}$ (resp, type $\mathcal{D}$) object associate a set consisting of one or two type $\Bb$ (resp. $\Ab$) objects for $\beta \in \{\alpha_1', \alpha_2, \alpha_4\}$.  Similarly, we define maps from type $\Ab$ objects to sets of type $\mathcal{D}$ objects, and from type $\Bb$ objects to sets of type $\mathcal{C}$ objects.  We now define these correspondences for the sets considered in Definition 4.\ref{definition:varioustypes}.

\begin{notation} \label{notation:scs1} Let $\gamma \in \mathcal{S}(M_1,M_2)$.
If $\{i-1,i\} \subseteq M_1$, we set $s_{\alpha_i}\gamma=s_i \gamma= \In^L(i-1,i;\gamma).$
If $\{1,2\} \subseteq M_1$, we define $s_{\alpha_1'}\gamma=s_1' \gamma= \SC^L(1; \SC^L(2;(\In^L(1,2;\gamma)))).$
If on the other hand $\{i-1,i\} \subseteq M_2$ or $\{1,2\} \subseteq M_2$, we define $\gamma s_{\alpha_i}$ and $ \gamma s_{\alpha_1'}$  by replacing $L$ with $R$ in the above.
\end{notation}

\begin{remark*}We have $\delta(s_1'\gamma)= \delta(\gamma)s_1'$, $\delta(\gamma s_1') = s_1'\delta(\gamma)$, $ \delta(s_i \gamma) = \delta(\gamma) s_i$, and $\delta(\gamma s_i) = s_i \delta(\gamma)$.
\end{remark*}

\begin{definition} \label{definition:Tforothertypes} Let $w \in W'$ or $w \in \mathcal{S}(M_1,M_2)$ and suppose that we have labeled the roots $\{\beta, \gamma, \delta\}=\{\alpha_1',\alpha_2, \alpha_4\} \subseteq \Pi'$.
    \begin{tenumerate}
        \item If $w$ is of left type $\Ab$, we define
            $$T_{D\beta}^L(w) = \{ y \in \{s_\beta w, s_{\alpha_3} s_\gamma s_\delta w, s_{\alpha_3} s_\delta s_{\alpha_3} w , s_{\alpha_3} s_\gamma s_{\alpha_3} w \} \; | \; y \text{ is of left type $\mathcal{D}$}\}.$$
        \item If $w$ is of left type $\Bb$, we define
         $$T_{C\beta}^L(w) = \{ y \in \{s_\beta w, s_{\alpha_3} s_\gamma s_\delta w, s_{\alpha_3} s_\delta s_{\alpha_3} w , s_{\alpha_3} s_\gamma s_{\alpha_3} w \} \; | \; y \text{ is of left type $\mathcal{C}$}\}.$$
        \item If $w$ is of left type $\mathcal{C}$, we define
         $$T_{\beta C}^L(w) = \{ y \in \{s_\beta w, s_{\alpha_3} s_\gamma s_\delta w, s_{\alpha_3} s_\delta s_{\alpha_3} w , s_{\alpha_3} s_\gamma s_{\alpha_3} w \} \; | \; y \text{ is of left type $\mathcal{\Bb}$}\}.$$
        \item If $w$ is of left type $\mathcal{D}$, we define  $$T_{\beta D}^L(w) = \{ y \in \{s_\beta w, s_{\alpha_3} s_\gamma s_\delta w, s_{\alpha_3} s_\delta s_{\alpha_3} w , s_{\alpha_3} s_\gamma s_{\alpha_3} w \} \; | \; y \text{ is of left type $\mathcal{\Ab}$}\}.$$
        \item If $w$ is of right type $\mathcal{\Ab}$, we define
         $$T_{D \beta }^R(w) = \{ y \in \{w s_\beta , w s_{\alpha_3} s_\gamma s_\delta, w s_{\alpha_3} s_\delta s_{\alpha_3}, w s_{\alpha_3} s_\gamma s_{\alpha_3} \} \; | \; y \text{ is of right type $\mathcal{D}$}\}$$ and proceed to define $ T_{C \beta }^R(w)$, $T_{\beta D }^R(w)$  and  $T_{\beta C}^R(w)$ analogously.
    \end{tenumerate}

\end{definition}

\begin{remark*}
    \begin{tenumerate}
        \item   If $w \in W'$ is of left type $\Ab$, we have $\delta(T_{D\beta}^L(w)) = T_{D\beta}^R((\delta(w))$, and similarly for the other types.
        \item   Let $\gamma \in \mathcal{S}(M_1,M_2)$.  Then $\gamma$ is of left type $\mathcal{C}$ if and only if $\SC(\gamma)$ is of left type $\mathcal{D}$, in which case $\SC(T_{\beta C}^L(\gamma)) = T_{\beta D} (\SC(\gamma))$.  Similarly, $\gamma$ is of left type $\Ab$ if and only if $\SC(\gamma)$ is of left type $\Bb$, in which case $\SC(T_{D\beta }^L(\gamma)) = T_{C \beta} (\SC(\gamma))$. Similarly with right in place of left.
        \item  Let $\gamma \in \mathcal{S}(M_1,M_2)$.  If $\gamma$ is of left type $\Ab$, then $\gamma \in \mathcal{S}'(M_1,M_2)$ if and only if $T_{D\beta}^L(\gamma) \subseteq \mathcal{S}'(M_1,M_2)$, and similarly for the other types.
    \end{tenumerate}
\end{remark*}

\begin{proposition} \label{proposition:1goesto2} Adopt the notation of Definition 4.\ref{definition:Tforothertypes}.
    \begin{tenumerate}
        \item   Let $X=C$ or $D$, and $Y=R$ or $L$.  The sets of the form $T_{\beta X}^Y(w)$ and  $T_{X \beta}^Y(w)$ consist of one or two elements.  Furthermore,
            \begin{enumerate}
                \item[(i)] if $T_{\beta X}^Y(w) = \{w'\}$, then $T_{X \beta}^Y(w')= \{w, w"\}$ with $w" \neq w'$ and $T_{\beta X}^Y(w") = \{w'\}$, and
                \item[(ii)] if $T_{\beta X}^Y(w) = \{w_1, w_2\}$ with $w_1 \neq w_2$, then $T_{X \beta}^Y(w_1) = T_{X \beta}^Y(w_2) = \{w\}$.
            \end{enumerate}
        \item If $w$ is of left type $\mathcal{C}$, then $T_{\gamma D}^L (T_{\alpha_3 \beta}^L(w)) = T_{\alpha_3 \gamma}^L(w))$.  In particular, $|T_{\beta C}(w)| = T_{\gamma C}^L(w)| = T_{\delta C}^L(w)|$, and similarly with $D$ in place of $C$.  If $w$ is of left type $\mathcal{C}$ and if $T_{\beta C}^L(w) = \{w_1,w_2\}$, then
                $$w_2 = (T_{\delta \alpha_3}^L \circ T_{\alpha_3 \beta}^L \circ T_{\gamma \alpha_3}^L \circ T_{ \alpha_3 \delta}^L \circ  T_{\beta \alpha_3}^L \circ T_{\alpha_3 \gamma}^L )(w_1).$$ If $w$ is of left type $\mathcal{D}$ and if $T_{\beta D}^L(w) = \{w_1,w_2\}$, then
                $$w_2 = (T_{\alpha_3 \delta }^L \circ T_{ \beta \alpha_3}^L \circ T_{\alpha_3 \gamma }^L \circ T_{ \delta  \alpha_3}^L \circ  T_{ \alpha_3 \beta}^L \circ T_{\gamma \alpha_3 }^L )(w_1).$$  These statements also hold with right in place of left.
    \end{tenumerate}
\end{proposition}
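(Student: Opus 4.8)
The plan is to reduce to a single ambient set and then to the local structure of the four roots $\{\alpha_1',\alpha_2,\alpha_3,\alpha_4\}$, which span a subsystem of type $D_4$ with $\alpha_3$ the trivalent node. The identities in (1)--(3) are built entirely from the $s$-operators of Notation 4.\ref{notation:scs1} and the type labels of Definition 4.\ref{definition:varioustypes}, and both of these are transported by $\delta$ (first remark after Definition 4.\ref{definition:Tforothertypes}, together with the second remark after Definition 4.\ref{definition:varioustypes}) and by $A$ (third remark after Definition 4.\ref{definition:varioustypes}, via Theorem 4.\ref{theorem:AcommutesT}). Hence it suffices to establish the statements in one model; I would work either in $W'$ (where the $T^L_{\alpha\beta}$ are the honest Vogan operators attached to the branched Dynkin diagram, so that the analysis of \cite{garfinkle:vogan} applies directly) or combinatorially in $\mathcal{S}(M_1,M_2)$, transporting the conclusion to the other sets.

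The organizing object is the type-$8$ \emph{packet} of $w$: the set of all type-$8$ objects reachable from $w$ by iterating the ordinary operators $T^L_{\alpha_3\beta}$ and $T^L_{\beta\alpha_3}$ for $\beta\in\{\alpha_1',\alpha_2,\alpha_4\}$. The key input, which I would extract from the $D_4$ computation of \cite{garfinkle:vogan} (or verify by a finite check over the local $D_4$ configurations), is the census of such packets already signalled before the proposition: every packet has exactly one of two shapes, either a ten-element packet with two objects of type $\mathcal{C}$, two of type $\mathcal{D}$, and one object each of type $\mathcal{A}_\beta$ and $\mathcal{B}_\beta$ for each leaf $\beta$; or a fourteen-element packet with one object each of type $\mathcal{C}$ and $\mathcal{D}$ and two objects each of type $\mathcal{A}_\beta$ and $\mathcal{B}_\beta$. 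Granting this, Part (1) is read off directly, because for fixed $\beta$ the candidate set of Definition 4.\ref{definition:Tforothertypes} selects precisely the type-$\Bb$ neighbours of a type-$\mathcal{C}$ object inside the packet: in the ten-element case each of the two $\mathcal{C}$'s sees the unique $\Bb$, giving case (i) with $w''$ the other $\mathcal{C}$; in the fourteen-element case the unique $\mathcal{C}$ sees both copies of $\Bb$, each of which sees only that $\mathcal{C}$, giving case (ii). In particular $|T^L_{\beta C}(w)|\in\{1,2\}$ and the reciprocity holds.

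For Part (2) I would first use the first remark after Definition 4.\ref{definition:varioustypes} to identify $T^L_{\alpha_3\beta}(w)$ with the type-$\mathcal{D}$ object of the packet, then verify the asserted compatibility by tracking $\tau^L$ through $T^L_{\gamma D}$ and $T^L_{\alpha_3\gamma}$, where the commutation $s_\gamma s_\delta=s_\delta s_\gamma$ of the leaf reflections and the braid relation of $s_{\alpha_3}$ with each leaf make the two sides agree. The ``in particular'' is then immediate: a packet has a single shape, so $|T^L_{\beta C}(w)|$ (namely $1$ in the ten-element case and $2$ in the fourteen-element case) is independent of the leaf $\beta$. Part (3) concerns only the fourteen-element case; here I would check, exactly by propagating the type labels, that the length-six composite $T^L_{\delta\alpha_3}\,T^L_{\alpha_3\beta}\,T^L_{\gamma\alpha_3}\,T^L_{\alpha_3\delta}\,T^L_{\beta\alpha_3}\,T^L_{\alpha_3\gamma}$ carries a type-$\Bb$ object successively through types $\mathcal{A}_\delta,\mathcal{B}_\gamma,\mathcal{A}_\beta,\mathcal{B}_\delta,\mathcal{A}_\gamma$ and back to type $\Bb$, so that every domain condition is met and the composite is a well-defined self-map of the type-$\Bb$ objects. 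Since each factor is a bijection with inverse $T^L_{\ba}$, the composite is a bijection; it preserves the type-$\mathcal{C}$ partner (so it maps the fibre of $T^L_{C\beta}$ over $w$ to itself), and on a genuine two-element fibre it realizes the nontrivial symmetry of the fourteen-element packet, hence is the transposition $w_1\leftrightarrow w_2$.

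The main obstacle is the packet census itself, together with the claim that the four candidate words of Definition 4.\ref{definition:Tforothertypes} pick out precisely the packet-neighbours of the correct type. The delicate feature making this nontrivial is the branched behaviour of $T^L_{\alpha_3\beta}$: it acts \emph{locally} on type-$\mathcal{A}$ and type-$\mathcal{B}$ objects, moving only $\alpha_3$ and $\beta$ in and out of $\tau^L$, but it \emph{cascades} on type-$\mathcal{C}$ and type-$\mathcal{D}$ objects, moving all three leaves simultaneously, and it is exactly this dichotomy that forces the two packet shapes. Once the census and the identification of candidates are secured---via \cite{garfinkle:vogan} in the Weyl-group model and then transported by $\delta$ and $A$---Parts (1)--(3) follow from the bookkeeping above.
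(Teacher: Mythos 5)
Your proposal follows essentially the same route as the paper: the paper's proof likewise establishes the statements in the Weyl-group model---by explicit case-by-case analysis for $W'$ when $n=4$, extended to $n>4$ via \cite[Exercise 3, \S 1]{bourbaki:ch4}, or alternately by citing Theorem 2.15 and Proposition 4.4 of \cite{garfinkle:vogan} regarded as statements about irreducible highest weight modules---and then transports them to $\mathcal{S}(n,n)$ via the $\delta$-compatibility remarks and to general $\mathcal{S}(M_1,M_2)$ by an obvious bijection, which is exactly your reduction scheme. One caution: your parenthetical fallback of a bare ``finite check over the local $D_4$ configurations'' is not by itself sufficient when $n>4$, since the $\tau$-invariants and the operators $T^L_{\alpha\beta}$ depend on the whole element rather than on rank-four local data---this is precisely the gap the paper closes with the Bourbaki exercise---but your primary route through \cite{garfinkle:vogan} sidesteps the issue.
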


    \begin{proof}
        For $W'$ when $n=4$, these statements can be verified directly by explicit case-by-case analysis.  For $W' >4$, they can be deduced from the $n=4$ case using \cite[Exercise 3, \S 1]{bourbaki:ch4}.
         Alternately, they follow from Theorem 2.15 and Proposition 4.4 of \cite{garfinkle:vogan}, regarding these results as statements about irreducible highest weight modules.  The statements for $\mathcal{S}(n,n)$ follow from this, using Remarks 2.1.7-1 and 4.\ref{notation:scs1}, and therefore for any $\mathcal{S}(M_1,M_2)$ by means of an obvious bijection.
    \end{proof}

We now proceed to describe the action of the operators defined above on the tableaux pairs $\mathcal{T}_D(M_1,M_2)$.  Our first task is to simplify the characterization of left types in the setting of domino tableaux.

\begin{notation} \label{notation:shapes} We define certain subsets of $\mathcal{F}$ using the natural identification with Young diagrams. For $X \in \{A, B, C, D \}$, let

$$
\raisebox{2ex}{$F_X^1=$ \;}
\begin{small}
\begin{tableau}
:.{}.{}.{} \\
:.{}.{}.{} \\
:.{}\\
:.{}\\
\end{tableau}
\end{small}
\hspace{.5in}
\raisebox{2ex}{$F_X^2=$ \;}
\begin{small}
\begin{tableau}
:.{}.{}.{}.{} \\
:.{}.{}\\
:.{}.{}\\
:;\\
\end{tableau}
\end{small}
$$
When $X \in \{A, B\}$, let
$$
\raisebox{2ex}{$F_X^3=$ \;}
\begin{small}
\begin{tableau}
:.{}.{}.{}.{} \\
:.{}.{} \\
:.{}\\
:.{}\\
\end{tableau}
\end{small}
\hspace{.5in}
\raisebox{2ex}{$F_X^4=$ \;}
\begin{small}
\begin{tableau}
:.{}.{}.{} \\
:.{}.{}.{}\\
:.{}.{}\\
:;\\
\end{tableau}
\end{small}
$$
And when $X \in \{C, D\}$, let
$$
\raisebox{2ex}{$F_X^3=$ \;}
\begin{small}
\begin{tableau}
:.{}.{}.{} \\
:.{}.{} \\
:.{}.{}\\
:.{}\\
\end{tableau}
\end{small}
\hspace{.5in}
\raisebox{2ex}{$F_X^4=$ \;}
\begin{small}
\begin{tableau}
:.{}.{}.{}.{} \\
:.{}.{}.{}\\
:.{}\\
:;\\
\end{tableau}
\end{small}
$$
\end{notation}

We use the above shapes to replace the final condition in the definition of each of the left types.    The following can be verified easily from the definitions:

\begin{proposition}  Let $(\T_1,\T_2) \in \mathcal{T}_D(M_1,M_2)$ and suppose $\{1,2,3,4\} \subseteq M_1$.  Then $(\T_1, \T_2)$ is of left type $\mathcal{A}_{\alpha'_1}$ if and only if $\{\alpha_1', \alpha_3\} \subseteq \tau(\T_1)$, $\{\alpha_2, \alpha_4\} \cap \tau(\T_1) = \varnothing$, and $\bigcup_{k=1}^4 P(k, \T_1) \in \{F_A^i \; | \; 1 \leq i \leq 4\}$.  The obvious analogue of this statement hold for each of the other types.
\end{proposition}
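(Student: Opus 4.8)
The plan is to separate the three defining conditions of left type $\mathcal{A}_{\alpha_1'}$ in Definition 4.\ref{definition:varioustypes} and match them one at a time against the three conditions of the proposition. Recall that, with $\{\gamma,\delta\}=\{\alpha_2,\alpha_4\}$, the pair $(\T_1,\T_2)$ is of left type $\mathcal{A}_{\alpha_1'}$ exactly when $\{\alpha_1',\alpha_3\}\subseteq\tau^L((\T_1,\T_2))$, $\{\alpha_2,\alpha_4\}\cap\tau^L((\T_1,\T_2))=\varnothing$, and $T_{\alpha_2\alpha_3}^L((\T_1,\T_2))\neq T_{\alpha_4\alpha_3}^L((\T_1,\T_2))$. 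Since the left $\tau$-invariant of a pair is read off from its left tableau alone, via Definition 4.\ref{definition:alpha1tau} for $\alpha_1'$ and Definition 2.1.5 for $\alpha_2,\alpha_3,\alpha_4$, we have $\tau^L((\T_1,\T_2))=\tau(\T_1)$, so the first two clauses are word-for-word the first two conditions of the proposition. The whole content of the statement is therefore the equivalence, under these $\tau$-hypotheses, of the operator inequality $T_{\alpha_2\alpha_3}^L\neq T_{\alpha_4\alpha_3}^L$ with the shape condition $\bigcup_{k=1}^4 P(k,\T_1)\in\{F_A^i\mid 1\le i\le 4\}$.

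Next I would make the $\tau$-hypotheses explicit as constraints on the placement of the dominoes $1,2,3,4$ in $\T_1$. The requirement $\alpha_1'\in\tau(\T_1)$ limits $P(1,\T_1)\cup P(2,\T_1)$ to the three configurations of Definition 4.\ref{definition:alpha1tau}, and the requirements $\alpha_3\in\tau(\T_1)$ together with $\alpha_2,\alpha_4\notin\tau(\T_1)$ then fix the placement of dominoes $3$ and $4$ relative to $1$ and $2$. Running through these possibilities produces a short explicit list of candidate shapes for $\bigcup_{k=1}^4 P(k,\T_1)$, of which $F_A^1,\dots,F_A^4$ are four, the others being the shapes that must be excluded.

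The heart of the proof is to evaluate $T_{\alpha_2\alpha_3}^L((\T_1,\T_2))$ and $T_{\alpha_4\alpha_3}^L((\T_1,\T_2))$ on each candidate and decide when they agree. Both are operators of the adjacent-root form $T_{\alpha_i\alpha_{i+1}}^L$ (with $i=2$ and $i=3$), defined on $\mathcal{T}_D$ as in Definition 2.1.11 by moving dominoes through the open cycles they determine; moreover, by Theorem 2.1.19 they commute with $A$, so whichever of the tableau or signed-permutation descriptions is more convenient may be used. I would argue that in each candidate configuration the relevant cycles are confined to the region occupied by the first four dominoes, so that both operators alter only $\bigcup_{k=1}^4 P(k,\T_1)$ and the comparison genuinely localizes there. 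Granting this, one checks directly that the two outputs are distinct precisely for the shapes $F_A^1,\dots,F_A^4$ and coincide for the remaining candidates, which yields the equivalence.

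The main obstacle is exactly this last computation: establishing that the comparison $T_{\alpha_2\alpha_3}^L\neq T_{\alpha_4\alpha_3}^L$ depends only on $\bigcup_{k=1}^4 P(k,\T_1)$, that is, that any open cycle reaching past the first four dominoes is carried identically by both operators, so that nothing outside can create or destroy an equality, and then carrying out the finite inspection of the two cycle-moves on each candidate shape. Once the type $\mathcal{A}_{\alpha_1'}$ case is settled this way, the analogues for the remaining types follow by the same method, with the shape lists $\{F_X^i\}$ for $X=B,C,D$ in place of $\{F_A^i\}$; transposition of tableaux and relabeling of the simple roots reduce each to a computation of the identical form, using the relations recorded in the Remark following Definition 4.\ref{definition:varioustypes}.
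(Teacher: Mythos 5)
Your reduction is set up correctly, and it is what the paper itself means by ``verified easily from the definitions'': since the left $\tau$-invariant of a pair $(\T_1,\T_2)$ is read off from $\T_1$ alone, the first two clauses of Definition 4.\ref{definition:varioustypes}(1) are literally the first two clauses of the proposition, and the whole content is the equivalence, under those $\tau$-constraints, of $T^L_{\alpha_2\alpha_3}(x)\neq T^L_{\alpha_4\alpha_3}(x)$ with the shape condition. However, you have misdescribed the two operators involved: for $\{\alpha,\beta\}=\{\alpha_i,\alpha_{i+1}\}$ with $i\geq 2$, the operators taken over from Definition 2.1.11 are \emph{not} defined by moving through cycles; they rearrange only the dominos $i-1,i,i+1$ of $\T_1$ and fix every other domino and all of $\T_2$. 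So for the stated type $\mathcal{A}_{\alpha_1'}$ your ``main obstacle'' is vacuous: both operator values automatically agree outside $\bigcup_{k\le 4}P(k,\T_1)$, the $\tau$-conditions force dominos $1,2$ into the side-by-side vertical configuration of Definition 4.\ref{definition:alpha1tau}(1), and one is left with a genuinely finite inspection --- dominos $3,4$ admit five placements, four giving the shapes $F_A^i$ (on which the two operators visibly differ) and one giving a $4\times 2$ rectangle of four vertical dominos (on which both operators equal the interchange of dominos $2$ and $3$).

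The genuine gap is in your final paragraph, which is part of the statement being proved. There is no ``relabeling of the simple roots'' symmetry in the tableau setting: transposition (and $\SC$) exchange $\mathcal{A}_\beta\leftrightarrow\mathcal{B}_\beta$ and $\mathcal{C}\leftrightarrow\mathcal{D}$ with the \emph{same} $\beta$, but nothing carries $\beta=\alpha_1'$ to $\beta=\alpha_2$ or $\alpha_4$. For exactly the remaining types ($\mathcal{A}_{\alpha_2}$, $\mathcal{A}_{\alpha_4}$, $\mathcal{C}$, and their transposes) the defining condition involves $T^L_{\alpha_1'\alpha_3}$ or $T^L_{\alpha_3\alpha_1'}$, and this operator, by cases (2)--(3) of Definition 4.\ref{definition:T13}, genuinely does move through extended cycles, hence can change $\T_2$ and dominos with labels greater than $4$. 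There your localization worry is real, but your proposed resolution --- that cycles reaching outside the first four dominos are ``carried identically by both operators'' --- is false: the adjacent-root operator moves nothing outside, while $T^L_{\alpha_1'\alpha_3}$ may. What the verification actually requires is the opposite kind of argument: on the configurations excluded by the shape condition one must show the relevant cycle is forced to be closed and confined to the first four dominos, so that the $\{\alpha_1',\alpha_3\}$-operator is local and coincides with the adjacent-root one, while on the $F$-shapes the two values always differ, whether or not the cycle move is confined. None of this is supplied by your proposal, and it is precisely the content hidden behind the paper's one-line proof.
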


In particular, the proposition implies that $\T_1$ determines the left type of the pair $(\T_1, \T_2)$, allowing us to abuse notation and speak of an individual domino tableau as being of a particular left type itself.  It is clear that whether a domino tableau is of a particular left type can be determined by its first four dominos.  Furthermore, it turns out that specifying a left type for a domino tableau in $\mathcal{T}_D(4)$ together with a compatible shape from 4.\ref{notation:shapes} determines the tableau itself uniquely.  We make this explicit in the following definition.

\begin{definition} We define a list of domino tableaux in $\mathcal{T}_D(4)$, each determined by a specific shape and left type.  For $1  \leq j \leq 4$ and $\beta \in \{\alpha'_1, \alpha_2, \alpha_4\}$ we require tableaux
\begin{align*}
(A_\beta^j, \phi_D) \text{ of type } & \mathcal{A}_{\beta} \text{ and }  \textup{Shape}(A^j_\beta)=F^j_A, \\
(B_\beta^j, \phi_D) \text{ of type } & \mathcal{B}_{\beta} \text{ and }  \textup{Shape}(B^j_\beta)=F^j_B, \\
(C^j, \phi_D) \text{ of type } & \phantom{.}\mathcal{C} \phantom{.} \text{ and }  \textup{Shape}(C^j)=F^j_C, \\
\text{ and \hspace{2ex}}(D^j, \phi_D) \text{ of type } & \phantom{.}  \mathcal{D} \phantom{.} \text{ and }  \textup{Shape}(D^j)=F^j_D.
\end{align*}

If we write $A_1^j$ in lieu of $A_{\alpha_1'}^j$ and condense notation in a similar fashion for the other types and roots, the sought-after tableaux are given by

$$
\raisebox{2ex}{$A_1^1=$ \;}
\begin{small}
\begin{tableau}
:^1^2^4\\
:;;;\\
:^3\\
\end{tableau}
\end{small}
\hspace{.2in}
\raisebox{2ex}{$A_1^2=$ \;}
\begin{small}
\begin{tableau}
:^1^2>4\\
:;;\\
:>3\\
\end{tableau}
\end{small}
\hspace{.2in}
\raisebox{2ex}{$A_1^3=$ \;}
\begin{small}
\begin{tableau}
:^1^2>4\\
:;;\\
:^3\\
\end{tableau}
\end{small}
\hspace{.2in}
\raisebox{2ex}{$A_1^4=$ \;}
\begin{small}
\begin{tableau}
:^1^2^4\\
:;;;\\
:>3\\
\end{tableau}
\end{small}
$$

$$
\raisebox{2ex}{$A_2^1=$ \;}
\begin{small}
\begin{tableau}
:>1^4\\
:>2\\
:^3\\
\end{tableau}
\end{small}
\hspace{.2in}
\raisebox{2ex}{$A_2^2=$ \;}
\begin{small}
\begin{tableau}
:>1>4\\
:>2\\
:>3\\
\end{tableau}
\end{small}
\hspace{.2in}
\raisebox{2ex}{$A_2^3=$ \;}
\begin{small}
\begin{tableau}
:>1>4\\
:>2\\
:^3\\
\end{tableau}
\end{small}
\hspace{.2in}
\raisebox{2ex}{$A_2^4=$ \;}
\begin{small}
\begin{tableau}
:>1^4\\
:>2\\
:>3\\
\end{tableau}
\end{small}
$$

$$
\raisebox{2ex}{$A_4^1=$ \;}
\begin{small}
\begin{tableau}
:^1>2\\
:;>3\\
:^4\\
\end{tableau}
\end{small}
\hspace{.2in}
\raisebox{2ex}{$A_4^2=$ \;}
\begin{small}
\begin{tableau}
:>1>2\\
:>3\\
:>4\\
\end{tableau}
\end{small}
\hspace{.2in}
\raisebox{2ex}{$A_4^3=$ \;}
\begin{small}
\begin{tableau}
:>1>2\\
:>3\\
:^4\\
\end{tableau}
\end{small}
\hspace{.2in}
\raisebox{2ex}{$A_4^4=$ \;}
\begin{small}
\begin{tableau}
:^1>2\\
:;>3\\
:>4\\
\end{tableau}
\end{small}
$$

$$
\raisebox{2ex}{$B_1^1=$ \;}
\begin{small}
\begin{tableau}
:>1^3\\
:>2\\
:^4\\
\end{tableau}
\end{small}
\hspace{.2in}
\raisebox{2ex}{$B_1^2=$ \;}
\begin{small}
\begin{tableau}
:>1>3\\
:>2\\
:>4\\
\end{tableau}
\end{small}
\hspace{.2in}
\raisebox{2ex}{$B_1^3=$ \;}
\begin{small}
\begin{tableau}
:>1>3\\
:>2\\
:^4\\
\end{tableau}
\end{small}
\hspace{.2in}
\raisebox{2ex}{$B_1^4=$ \;}
\begin{small}
\begin{tableau}
:>1^3\\
:>2\\
:>4\\
\end{tableau}
\end{small}
$$

$$
\raisebox{2ex}{$B_2^1=$ \;}
\begin{small}
\begin{tableau}
:^1^2^3\\
:;\\
:^4\\
\end{tableau}
\end{small}
\hspace{.2in}
\raisebox{2ex}{$B_2^2=$ \;}
\begin{small}
\begin{tableau}
:^1^2>3\\
:;\\
:>4\\
\end{tableau}
\end{small}
\hspace{.2in}
\raisebox{2ex}{$B_2^3=$ \;}
\begin{small}
\begin{tableau}
:^1^2>3\\
:;\\
:^4\\
\end{tableau}
\end{small}
\hspace{.2in}
\raisebox{2ex}{$B_2^4=$ \;}
\begin{small}
\begin{tableau}
:^1^2^3\\
:;;\\
:>4\\
\end{tableau}
\end{small}
$$

$$
\raisebox{2ex}{$B_4^1=$ \;}
\begin{small}
\begin{tableau}
:^1^3^4\\
:;\\
:^2\\
\end{tableau}
\end{small}
\hspace{.2in}
\raisebox{2ex}{$B_4^2=$ \;}
\begin{small}
\begin{tableau}
:>1>4\\
:^2^3\\
:;\\
\end{tableau}
\end{small}
\hspace{.2in}
\raisebox{2ex}{$B_4^3=$ \;}
\begin{small}
\begin{tableau}
:^1^3>4\\
:;;\\
:^2\\
\end{tableau}
\end{small}
\hspace{.2in}
\raisebox{2ex}{$B_4^4=$ \;}
\begin{small}
\begin{tableau}
:>1^4\\
:^2^3\\
:;\\
\end{tableau}
\end{small}
$$

$$
\raisebox{2ex}{$C^1=$ \;}
\begin{small}
\begin{tableau}
:^1>3\\
:;>4\\
:^2\\
\end{tableau}
\end{small}
\hspace{.2in}
\raisebox{2ex}{$C^2=$ \;}
\begin{small}
\begin{tableau}
:>1>3\\
:^2^4\\
:;\\
\end{tableau}
\end{small}
\hspace{.2in}
\raisebox{2ex}{$C^3=$ \;}
\begin{small}
\begin{tableau}
:^1>3\\
:;^4\\
:^2\\
\end{tableau}
\end{small}
\hspace{.2in}
\raisebox{2ex}{$C^4=$ \;}
\begin{small}
\begin{tableau}
:>1>3\\
:^2>4\\
:;\\
\end{tableau}
\end{small}
$$

$$
\raisebox{2ex}{$D^1=$ \;}
\begin{small}
\begin{tableau}
:^1>2\\
:;>4\\
:^3\\
\end{tableau}
\end{small}
\hspace{.2in}
\raisebox{2ex}{$D^2=$ \;}
\begin{small}
\begin{tableau}
:>1>2\\
:^3^4\\
:;\\
\end{tableau}
\end{small}
\hspace{.2in}
\raisebox{2ex}{$D^3=$ \;}
\begin{small}
\begin{tableau}
:^1>2\\
:;^4\\
:^3\\
\end{tableau}
\end{small}
\hspace{.2in}
\raisebox{2ex}{$D^4=$ \;}
\begin{small}
\begin{tableau}
:>1>2\\
:^3>4\\
:;\\
\end{tableau}
\end{small}
$$

When $A_1^j \subseteq \mathbf{T}$, we define $\textup{Re}(A,X;\mathbf{T}) = (\mathbf{T}\setminus A) \cup X$, where $X=D^j$.  Analogously, we define the other obvious variations of this.  As usual, we are abusing notation here so that for $\mathbf{T} = (T, \phi_D)$ we write $A_1^j \subseteq \mathbf{T}$ in place of $A_1^j \subseteq T$.

\end{definition}

\begin{lemma} \label{lemma:3inecnot4} Consider  $(\T_1, \T_2) \in \mathcal{T}_D(M_1,M_2)$ and suppose that either $C^j$ or $D^j \subseteq \T_1$ for some $j$.  Write $Y$ for the latter set of dominos.
If $\T_1' = \textup{Re}(Y,X,\T_1)$, where $X \in \{A_1^j, A_2^j, A_4^j, B_1^j, B_2^j, B_4^j\}$, then $$3 \in ec(4, \T_1; \T_2) \text{ if and only if } 3 \notin ec(4,\T_1'; \T_2).$$
\end{lemma}
\begin{proof} This can be observed directly when $|M_1|=4$.  Otherwise, the argument is by induction using Proposition 2.3.3 (b), as in the proof of Remark 2.3.4.
\end{proof}

Analogously to Proposition 3.1.3, we have

\begin{lemma}\label{cspb:typeD} With notation as in the previous lemma, we have a c.s.p.b. between open cycles in $\T_1$ not containing 3 or 4 and open cycles in $\T_1'$ not containing 3 or 4.  This extends to a c.s.p.b. between all the open cycles in $\T_1$ and those in $\T_1'$ if either $c(4,\T_1)$ is closed or $3\in c(4,\T_1)$.  Otherwise the $S_b$ squares of $c(3,\T_1)$ and $c(4,\T_1')$ coincide and likewise for the $S_f$ squares of the same cycles.
\end{lemma}

We are ready to describe the action of the operators of this section on pairs of domino tableaux.

\begin{definition} \label{definition:TalphaC}

Let $(\T_1, \T_2) \in \mathcal{T}_D(M_1,M_2)$ and suppose it is of left type $\mathcal{C}$.  Let $j$ be such that $C^j \subseteq \T_1$.
\begin{tenumerate}
    \item Suppose $4 \in ec(3, \T_1; \T_2).$  Then $j \in \{1,2\}$.  Let $\T_1' = \textup{Re}(C^j, B^j_1; \T_1).$  Set $c_k = ec(k, \T_1'; \T_2)$ for $k \in \{3,4\}$, and note that by Lemma 4.\ref{lemma:3inecnot4}, $c_3 \neq c_4$.
        \begin{enumerate}[label=(\alph*), leftmargin=.6in, topsep=0ex, itemsep=0ex]
            \item If $n_h(C^j) = n_h(B_1^j)$, we define
                    $$T_{\alpha'_1 C}^L((\T_1,\T_2)) = \{(\T_1', \T_2), \E((\T_1',\T_2); \{c_3,c_4\},L)\}.$$
            \item If $n_h(C^j) \neq n_h(B_1^j)$, we define
                    $$T_{\alpha'_1 C}^L((\T_1,\T_2)) = \{\E((\T_1', \T_2),c_3,L), \E((\T_1',\T_2),c_4,L)\}.$$
        \end{enumerate}
    \item Suppose $4 \notin ec(3, \T_1; \T_2).$
            \begin{enumerate}[label=(\alph*), leftmargin=.6in, topsep=0ex, itemsep=0ex]
            \item If $n_h(C^j) = n_h(B_1^j)$ and $j\in \{1,2\}$, we define
                    $$T_{\alpha'_1 C}^L((\T_1,\T_2)) = \{\textup{Re}(C^j,B_1^j;\T_1), \T_2)\} .$$
            \item If $n_h(C^j) \neq n_h(B_1^j)$ and $j\in \{1,2\}$, we define
                    $$T_{\alpha'_1 C}^L((\T_1,\T_2)) = \{\E((\T_1', \T_2),ec(3, \T_1';\T_2),L)\}.$$
            \item If $j \in \{3,4\},$ we let $k \in \{1,2\}$ be such that $n_h(C^k) \neq n_h(B_1^k)$, and we let $l \in \{3,4\}$ be such that $\E(\mathbf{C}^j, c(l, \mathbf{C}^j))= \mathbf{C}^k$, where $\mathbf{C}^j = (C^j, \phi_D)$.  We set $(\T_1',\T_2') = \E((\T_1, \T_2), ec(l, \T_1; \T_2),L)$, and define
                $$T_{\alpha'_1 C}^L((\T_1,\T_2)) = \{\textup{Re}(C^k,B_1^k;\T_1'), \T_2')\} .$$
        \end{enumerate}
\end{tenumerate}
Each of the other maps of this type are defined using obvious analogues of the above.
\end{definition}

\begin{remark*}
\begin{tenumerate}
    \item The statements of Proposition 4.\ref{proposition:1goesto2} hold for the maps defined in Definition 4.\ref{definition:TalphaC}.
    \item Let $Y \in \mathcal{T}_D(M_1,M_2)$.  Then $Y$ is of left type $\mathcal{C}$ if and only if $^tY$ is of left type $\mathcal{D}$. If $Y$ is of left type $\mathcal{C}$, then $^t(T^L_{\beta C}(Y)) = T^L_{\beta D}(^t Y)$.  Similarly, if $Y$ is of left type $\Ab$, then $^t(T^L_{D \beta}(Y)) = T^L_{C \beta}(^tY)$.  Similarly with right in place of left.
    \item Let $Y \in \mathcal{T}_D(M_1,M_2)$.  If $Y$ is of left type $\Ab$, then $Y \in \mathcal{T}'_D(M_1,M_2)$ if and only if $T^L_{D \beta} (Y) \subseteq \mathcal{T}_D'(M_1,M_2)$, and similarly for the other types.  This uses Proposition 4.\ref{proposition:nhmt}.
\end{tenumerate}
\end{remark*}

\begin{proposition}
Let $\gamma \in \mathcal{S}(M_1,M_2)$ and let $\beta \in \{\alpha'_1, \alpha_2, \alpha_4\}$.
\begin{tenumerate}
    \item   \begin{enumerate}[label=(\alph*), leftmargin=.6in, topsep=0ex, itemsep=0ex]
                \item If $\gamma$ is of left type $\Ab$, then $T^L_{D \beta}(A(\gamma)) = A(T^L_{D \beta}(\gamma))$.
                \item If $\gamma$ is of left type $\Bb$, then $T^L_{C \beta}(A(\gamma)) = A(T^L_{C \beta}(\gamma))$.
                \item If $\gamma$ is of left type $\mathcal{C}$, then $T^L_{\beta C}(A(\gamma)) = A(T^L_{ \beta C}(\gamma))$.
                \item If $\gamma$ is of left type $\mathcal{D}$, then $T^L_{\beta D}(A(\gamma)) = A(T^L_{ \beta D}(\gamma))$.
            \end{enumerate}
    \item Analogous statements hold with right in place of left.
\end{tenumerate}

\end{proposition}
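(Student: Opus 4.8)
The plan is to prove the left-type statements in part (1); the right-type statements of part (2) then follow by applying the map $\delta$, which interchanges $\tau^L$ with $\tau^R$ and swaps the tableaux $\LL$ and $\R$, together with the $L\leftrightarrow R$ forms of the remarks following Definitions 4.\ref{definition:Tforothertypes} and 4.\ref{definition:TalphaC}. Throughout I would lean on the fact, recorded in the remark after Definition 4.\ref{definition:varioustypes}, that $A$ preserves each of the types $\Ab$, $\Bb$, $\mathcal{C}$, $\mathcal{D}$. This is exactly what makes the type-selection built into both Definition 4.\ref{definition:Tforothertypes} and Definition 4.\ref{definition:TalphaC} commute with $A$ automatically, so that the real content of the proposition is the matching of the \emph{candidate} elements, with the subsequent filtering by type being transparent.

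Among the four families (a)--(d) it suffices to treat a single one, say $T^L_{\beta C}$ acting on a $\gamma$ of left type $\mathcal{C}$. Indeed, the transpose identity $^t(T^L_{\beta C}(Y)) = T^L_{\beta D}(^t Y)$ for tableaux and its analogue $\SC(T^L_{\beta C}(\gamma)) = T^L_{\beta D}(\SC(\gamma))$ for signed permutations (the remarks after Definitions 4.\ref{definition:TalphaC} and 4.\ref{definition:Tforothertypes}), together with the fact that $A$ intertwines the global operation $\SC$ with transposition of both tableaux, convert statement (d) into statement (c); the inverse relationship of Proposition 4.\ref{proposition:1goesto2} (1), which by the remark after Definition 4.\ref{definition:TalphaC} holds on both sides, then converts (c) into (b) and (d) into (a) since $A$ is a bijection. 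So I would fix attention on $T^L_{\beta C}$ with, say, $\beta=\alpha'_1$.

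Next I would reduce the matching to a \emph{single} element. If $T^L_{\beta C}(\gamma)=\{w_1,w_2\}$ has two elements, then Proposition 4.\ref{proposition:1goesto2} (3) expresses $w_2$ as an explicit six-fold composition of operators $T^L_{\alpha\beta}$ applied to $w_1$, and by the remark after Definition 4.\ref{definition:TalphaC} the same composition relates the two tableau outputs. Since $A$ commutes with every $T^L_{\alpha\beta}$ for $\{\alpha,\beta\}=\{\alpha_i,\alpha_{i+1}\}$ or $\{\alpha'_1,\alpha_3\}$, by Theorem 2.1.19 and Theorem 4.\ref{theorem:AcommutesT}, it follows that $A(w_2)$ is the prescribed tableau element as soon as $A(w_1)$ is. Thus everything comes down to verifying, for one representative candidate of the correct type $\Bb$ among the four listed in Definition 4.\ref{definition:Tforothertypes}, that $A$ of it agrees with the $\textup{Re}/\EE$ prescription of Definition 4.\ref{definition:TalphaC}. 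I would carry out this single verification by rewriting the chosen candidate via Notation 4.\ref{notation:scs1} as a word in $\SC^L$ and $\In^L$ and pushing $A$ across it one letter at a time: each factor $\SC^L(1;-)$ becomes a move $\EE$ through the extended cycle $ec(1,\LL;\R)$ by Lemma 4.\ref{lemma:AofSC}, while each intertwining $\In^L$ is absorbed into a $T^L_{\alpha\beta}$ and handled by the commutation theorems. Matching the resulting cycle moves against the shapes $C^j$ and $B^j_1$ of Figure \ref{table:tableaux} should identify the output as $\textup{Re}(C^j,B^j_1;\LL(\gamma))$, moved through $c_3$ and/or $c_4$ exactly as prescribed in Definition 4.\ref{definition:TalphaC}.

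The base case of this verification is $|M_1|=4$, where $\LL(\gamma)$ is literally one of the finitely many entries of Figure \ref{table:tableaux} and the computation is mechanical. For $|M_1|>4$ I would induct on $|M_1|$, removing $e=\sup M_1$ and invoking Proposition 2.3.3 to track how the extended cycles $ec(3,\T_1;\T_2)$, $ec(4,\T_1;\T_2)$ and the replacement $\textup{Re}$ behave; Lemma 4.\ref{lemma:3inecnot4} supplies the crucial control over whether $3\in ec(4,\T_1;\T_2)$, which is precisely the dichotomy organizing the cases of Definition 4.\ref{definition:TalphaC}. The main obstacle, and where I expect the real work to lie, is exactly this bookkeeping: the extended cycles and the operation $\EE$ are global, so matching the algebraically defined candidates $s_\beta\gamma$ against the geometrically defined $\textup{Re}/\EE$ outputs is not local to the first four dominoes. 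The delicate subcase is $j\in\{3,4\}$ in part (2) of Definition 4.\ref{definition:TalphaC}, where one must first move through $ec(l,\T_1;\T_2)$ before performing the replacement, and where the order of the cycle moves must be checked against the order of the letters in the $\SC^L$/$\In^L$ word with some care.
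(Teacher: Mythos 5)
Your reduction steps coincide with the paper's: part (2) from part (1), the interchange of (a)--(d) via Proposition 4.\ref{proposition:1goesto2}, the remarks following Definitions 4.\ref{definition:Tforothertypes} and 4.\ref{definition:TalphaC}, and Theorems 2.1.19 and 4.\ref{theorem:AcommutesT} (the paper collapses everything onto (1)(d) rather than (c), and reduces further to exhibiting a single $\eta$ and a single $\gamma'\in T^L_{\eta D}(\gamma)$ with $A(\gamma')\in T^L_{\eta D}(A(\gamma))$). The genuine gap is in what remains after these reductions, which is the entire content of the proof, and the two mechanisms you propose for it both fail as stated. First, Lemma 4.\ref{lemma:AofSC} only lets you push $A$ across $\SC^L(1;\cdot)$; it says nothing about $\SC^L(2;\cdot)$, and by Notation 4.\ref{notation:scs1} the candidate $s_{\alpha_1'}\gamma=\SC^L(1;\SC^L(2;\In^L(1,2;\gamma)))$ contains exactly such a letter, so the letter-by-letter strategy stalls on precisely the candidates involving $\alpha_1'$. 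Second, an $\In^L$ factor cannot in general be ``absorbed into'' a $T^L_{\alpha\beta}$: by Definition 4.\ref{definition:tau}, $T^L_{\alpha\beta}$ is whichever of two maps lands in $D^L_{\beta\alpha}$, so whether it equals the particular $\In^L$ in your word is a $\tau$-invariant condition on the output that varies with the configuration of $\gamma$; this is exactly the case distinction you are trying to bypass, and when the check fails the commutation theorems say nothing about that $\In^L$.

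The paper settles the verification not by induction on $|M_1|$ but by a finite case analysis: having reduced to type $\mathcal{D}$, it lists the sixteen possible arrangements of the triples with labels $1,2,3,4$, halves them using the symmetry $\gamma\mapsto \SC(s_3\gamma)$, and in each surviving case exhibits a candidate adapted to that configuration. In cases (1)--(6) the candidate is $s_2\gamma=\In^L(1,2;\gamma)$, and one observes directly that $D^1\subseteq\LL(\gamma)$, $\LL(s_2\gamma)=\textup{Re}(D^1,A_2^1;\LL(\gamma))$, and $\R(s_2\gamma)=\R(\gamma)$; in cases (7)--(8) the candidate is $s_1'\gamma$, and the argument first passes to $\gamma_1=\SC^L(1;\gamma)$ via Lemma 4.\ref{lemma:AofSC} (using $c(1,\LL(\gamma))=c(3,\LL(\gamma))$, so that the move is through $ec(3,\LL(\gamma);\R(\gamma))$) and then observes $\LL(s_1'\gamma)=\textup{Re}(D^1,A_1^1;\LL(\gamma_1))$ and $\R(s_1'\gamma)=\R(\gamma_1)$, matching the cycle-then-replace branch of Definition 4.\ref{definition:TalphaC}. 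Since both the correct candidate and the applicable branch of Definition 4.\ref{definition:TalphaC} change from case to case, no single ``representative candidate'' computation can cover a general type-$\mathcal{C}$ or type-$\mathcal{D}$ element; your proposal defers exactly this case-dependent matching to unspecified bookkeeping in an induction that the paper never needs, so the central step of the proof is missing.
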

\begin{proof}  As usual it suffices to prove (1).  In fact, using Proposition 4.\ref{proposition:1goesto2}, Remark 4.\ref{definition:TalphaC}(1), and Theorems 2.1.19 and 4.\ref{theorem:AcommutesT}, it suffices to prove (1)(d).  Further, it is enough to prove that given $\gamma \in \mathcal{S}(M_1,M_2)$ of left type $\mathcal{D}$, there is an $\eta \in \{\alpha_1',\alpha_2, \alpha_4\}$ and a $\gamma' \in  T^L_{\eta D}(\gamma)$ such that $A(\gamma') \in T^L_{\eta D}(A(\gamma))$.  So assume $\gamma $ is of left type $\mathcal{D}$.  Define for $ j \in M_1$, $k_j$ and $\epsilon_j$ by $(j, k_j, \epsilon_j) \in \gamma$.  Let $d<e<f<g$ be such that $\{d,e,f,g\} = \{k_1,k_2,k_3,k_4\}$.  There are sixteen cases.  We list the first eight:

\begin{tenumerate}
    \item $\gamma \subseteq \{(4,d,\epsilon),(1,e,-1),(2,f,1),(3,g,-1)\}$
    \item $\gamma \subseteq \{(4,d,\epsilon),(1,e,-1),(3,f,-1),(2,g,1)\}$
    \item $\gamma \subseteq \{(1,d,-1),(4,e,1),(2,f,1),(3,g,-1)\}$
    \item $\gamma \subseteq \{(1,d,-1),(4,e,-1),(3,f,-1),(2,g,1)\}$
    \item $\gamma \subseteq \{(1,d,-1),(4,e,1),(3,f,-1),(2,g,1)\}$
    \item $\gamma \subseteq \{(1,d,-1),(3,e,-1),(4,f,1),(2,g,1)\}$
    \item $\gamma \subseteq \{(1,d,1),(4,e,1),(3,f,-1),(2,g,1)\}$
    \item $\gamma \subseteq \{(1,d,1),(3,e,-1),(4,f,1),(2,g,1)\}$
\end{tenumerate}

Cases (9)-(16) are obtained from the above by replacing $\gamma$ with $s_3(\SC(\gamma)) = \SC(s_3 \gamma)$. Using Remarks 2.1.6 (2), 4.\ref{definition:Tforothertypes}(2), and 4.\ref{definition:TalphaC}(2), together with the results cited at the beginning of this proof, we see that is suffices to verfiy the theorem in the first eight cases.  In the first six cases, we observe that $s_2\gamma \in T^L_{\alpha_2 D}(\gamma)$, that $D^1 \subseteq \mathbf{L}(\gamma)$, that $\mathbf{L}(s_2 \gamma) = \textup{Re}(D^1, A_2^1; \mathbf{L}(\gamma))$, and that $\R(s_2 \gamma)=\R(\gamma)$.  That is, we have $A(s_2 \gamma) \in T^L_{\alpha_2 D}(A(\gamma))$, as desired.  In cases (7) and (8), we observe that $D^4 \subseteq \LL(\gamma)$ and set $\gamma_1 = \SC^L(1,\gamma)$.  Then by Lemma 4.\ref{lemma:AofSC} and using the fact that $c(1, \LL(\gamma)) = c(3, \LL(\gamma)),$ we have $A(\gamma_1) = \E(A(\gamma), ec(3, \LL(\gamma);\R(\gamma)), L).$ In particular, $D^1 \subseteq \LL(\gamma_1)$.  Now we observe that $s_1' \gamma \in T^L_{\alpha_1' D}(\gamma),$ that $\LL(s_1' \gamma) = \RE(D^1, A_1^1; \LL(\gamma_1))$, and that $\R(s_1' \gamma) = \R(\gamma_1)$.  That is, we have $A(s_1' \gamma) \in T^L_{\alpha_1' D}(A(\gamma))$, as desired.

\end{proof}

The proof of Proposition 3.1.4 shows that this result extends word for word to type $D$.  Now Proposition 3.1.5 also carries over to type $D$ and applies in addition to the operator $T_{\alpha_1'\alpha_3}$, with the same proof.   So does Proposition 3.1.6, replacing its hypothesis by the hypothesis that  $(\T_1,\T_2)$ be of left type $\mathcal{C}$, the operator $T_{\alpha\beta}$ by $T_{\alpha_1',C}^L$, and replacing the domino labels 1,2 throughout by 3,4, respectively.

\section{}

\begin{definition} We write $\Pi^* = \Pi' \cup \{A,B,C,D\}$ and say that $$\Sigma = (\alpha^1, \beta^1), \ldots, (\alpha^k, \beta^k)$$ is a sequence or $\Pi^*$ if for each $i$ either $\{\alpha^i, \beta^i\}$ is a pair of adjacent simple roots in $\Pi'$ or $\{\alpha^i, \beta^i\}= \{X, \alpha\}$ for some $X \in \{C,D\}$ and $\alpha \in \{\alpha_1', \alpha_2, \alpha_4\}$.  Analogous to Definition 3.2.1, we define the composition of operators $T_\Sigma^L$ and $T_\Sigma^R$ for such a $\Sigma$.  We also carry over to this situation the analogues of any other notation introduced in Definition 3.2.1 and Definition 3.2.5.
\end{definition}

\begin{remark*}  Suppose $\Sigma$ is a sequence for $\Pi^*$ and $(\T_1, \T_2) \in \mathcal{T}_D(M_1,M_2)$ with $M_1 = \{1,\ldots, n\}$.  If $(\T_1',\T_2') \in T_\Sigma^L((\T_1,\T_2))$, then $n_v((\T_1',\T_2')) \equiv n_v((\T_1,\T_2)) \pmod{4}$.
The purpose of this section is to prove an analogue of Theorem 3.2.2.
\end{remark*}

\begin{theorem} \label{theorem:samespecialthensequence} Suppose $(\T_1,\T_2)$ and  $(\T_1',\T_2') \in \mathcal{T}_D(M_1,M_2)$ with $M_1 = \{1, \ldots, n\}$ and suppose $\Ss(\T_2) = \Ss(\T_2')$ and $n_v((\T_1',\T_2')) \equiv n_v((\T_1,\T_2)) \pmod{4}$.  Then there is a sequence $\Sigma$ for $\Pi^*$ such that $(\T_1',\T_2') \in T_\Sigma^L((\T_1,\T_2))$.
\end{theorem}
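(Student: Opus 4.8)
The plan is to argue by induction on $n=|M_1|$, imitating the scheme by which Theorem 3.2.2 was proved but substituting the type-$D$ operators for the short-root operators of type $C$ and carrying the residue $n_v \bmod 4$ through every step. Two features of the setup make this feasible. First, the maps $\delta$ and $A$ intertwine every operator in play (Theorems 2.1.19 and 4.\ref{theorem:AcommutesT}, the preceding Proposition on $T^L_{\beta D}$ and its companions, and the remarks after Definitions 4.\ref{definition:Tforothertypes} and 4.\ref{definition:TalphaC}), and $A\circ\delta$ sends $W'$ onto $\mathcal{T}'_D(n,n)$ and $W''$ onto $\mathcal{T}''_D(n,n)$; so one may run any given step on whichever of $\mathcal{T}_D(M_1,M_2)$, $\mathcal{S}(M_1,M_2)$, or $W'\sqcup W''$ is most convenient, the $D_4$ identities of Proposition 4.\ref{proposition:1goesto2} being cleanest on the group side and the reduction being cleanest on the tableau side. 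Second, the two operator families play complementary roles: the operators $T_{\alpha_i\alpha_{i+1}}$ ($i\ge2$) and $T_{\alpha_1'\alpha_3}$ move within a fixed right tableau, whereas the multivalued $D_4$ operators $T_{X\beta}$ jump between the left cells sharing the common shape $\Ss(\T_2)$; it is their combination that must be shown to act transitively once $\Ss(\T_2)$ and $n_v\bmod 4$ are fixed.

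First I would dispose of the base case $n\le4$. Here the hypothesis leaves only finitely many pairs of a given shape $\Ss(\T_2)$ and a given residue $n_v\bmod4$, and these can be matched directly against the explicit type-$8$ tableaux of Figure \ref{table:tableaux} using Definition 4.\ref{definition:TalphaC} and Proposition 4.\ref{proposition:1goesto2}. This is the one place where the grouping into sets of ten or fourteen from \cite{garfinkle:vogan} is verified by hand.

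For the inductive step I would standardize the top domino. Writing $\lambda=\Ss(\T_2)=\Ss(\T_2')$, I would first use operators to drive the domino carrying $n$ into a common standard removable corner of $\lambda$ in both pairs; since the ordinary operators fix $\T_2$, the genuine movement of $\T_2$ within its shape class is produced by the $D_4$ operators, while the cycle-commutation results of the previous section — that $T_{\alpha\beta}$ and $T_{X\beta}$ commute with moving through the relevant extended cycles — are what allow these adjustments to be carried out and, afterwards, allow a sequence between truncated pairs to be transported back up to the full pairs. Once the top domino agrees I would delete it from all four tableaux, obtaining pairs in $\mathcal{T}_D(\,\cdot\,)$ of equal right shape; Proposition 4.\ref{proposition:nhmt} together with Lemma 4.\ref{lemma:3inecnot4} controls how $n_v$ changes under the standardization, so that the truncated pairs share the same residue $n_v\bmod4$ and the inductive hypothesis yields a $\Pi^*$-sequence $\overline{\Sigma}$. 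Lifting $\overline{\Sigma}$ through the reinsertion, again via the commutation propositions of the previous section, produces the desired $\Sigma$.

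The hard part will be the standardization step carried out with $\Pi^*$-operators alone. In type $C$ the extra short-root operator renders this routine, but here the only available tools are $T_{\alpha_i\alpha_{i+1}}$ ($i\ge2$), $T_{\alpha_1'\alpha_3}$, and the $D_4$ operators, none of which may alter $n_v\bmod4$ (the remark after Definition 4.\ref{definition:TalphaC}, via Proposition 4.\ref{proposition:nhmt}, and the remark after Definition 4.\ref{definition:Tforothertypes}). In precisely the configurations where the naive move into the standard corner would be a ``type-$C$-only'' move changing the residue, one must re-route it through the $D_4$ operators, whose branching is governed by Proposition 4.\ref{proposition:1goesto2}. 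Verifying that such a re-routing always exists, and tracking the one- versus two-element outputs carefully enough that the intended target pair — not merely some pair of the correct shape and residue — is actually reached, is the crux of the argument and the reason the hypothesis on $n_v\bmod4$ is indispensable.
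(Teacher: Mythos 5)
Your high-level scheme is the same as the paper's: a simultaneous induction in which one first drives both right tableaux to their special representatives, then moves the domino labelled $n$ in one left tableau to the position it occupies in the other, then truncates, applies the inductive hypothesis, and lifts the resulting sequence back up via the cycle-commutation results. So far this matches the paper, which explicitly models its argument on that of Theorem 3.2.2, proving analogues of Proposition 3.2.4 and Lemma 3.2.9 (namely Proposition 4.\ref{proposition:makespecial} and Lemma 4.\ref{lemma:makeextremal}) by an induction intertwined with Theorem 4.\ref{theorem:samespecialthensequence} itself in rank $n-1$.

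However, there is a genuine gap, and it sits exactly at the point you label ``the crux'' and then leave unverified. Moreover, the obstruction is mis-diagnosed: the difficulty is not whether a parity-violating move can be ``re-routed through the $D_4$ operators'' --- no operator of any kind can change $n_v \pmod{4}$ --- but whether the intermediate tableaux needed for the standardization step \emph{exist at all}. The standard argument requires, at each stage, a tableau of a prescribed shape with prescribed positions $P_1,P_2$ for its two largest dominos and prescribed residue of $n_v \pmod 4$; in type $D$ such a tableau can fail to exist, precisely when the relevant residual shape is of nested type (Definition 4.\ref{definition:nested}), since such shapes admit a unique domino tiling and hence admit no adjustment of $n_v$ whatsoever. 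This is why the type-$C$ workhorse Lemma 3.2.8, though still true, becomes useless here, and why the paper replaces it with Propositions 4.\ref{proposition:notnested+2} and 4.\ref{proposition:notnestedextremal}, builds an explicit existence hypothesis into the statement of Lemma 4.\ref{lemma:makeextremal}, and devotes the long case analyses of Proposition 4.\ref{proposition:makespecial} and Lemma 4.\ref{lemma:makeextremal} to choosing $P_1,P_2$ so that the residual shapes avoid nested type --- handling directly, via explicit tableaux and short sequences (including the $D_4$ operators), the exceptional low-rank configurations where this cannot be arranged. Your proposal contains none of this: without the nested-type analysis and the two substitute propositions, the inductive step cannot be carried out, and a finite check at $n\le 4$ does not substitute for it, since the problematic nested configurations recur at every rank.
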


In broad outline, our proof of Theorem 4.\ref{theorem:samespecialthensequence} will follow that of  Theorem 3.2.2 as closely as possible.  We will prove first Proposition 4.\ref{proposition:458} and Lemma 4.\ref{lemma:459}, the analogues of Proposition 3.2.4 and Lemma 3.2.9.  The latter used as main ingredients Lemmas 3.2.6, 3.2.7, and 3.2.8.  The  analogues of Lemmas 3.2.6 and 3.2.7 obtained by simply replacing $\mathcal{T}_C(M_1,M_2)$ by $\mathcal{T}_D(M_1,M_2)$ and $\Pi$ by $\Pi^*$ are easily verified and we will use them here citing them by their original numbers.  On the other hand, the analogue to Lemma 3.2.8, though valid, is useless because of the parity condition $n_v((\T_1',\T_2')) \equiv n_v((\T_1,\T_2)) \pmod{4}$ in Theorem 4.\ref{theorem:samespecialthensequence}.  We will replace it with a combination of Propositions 4.\ref{proposition:454} and 4.\ref{proposition:456}.  This combination is less powerful than the original, so as a result, the proofs of Proposition 4.\ref{proposition:458} and Lemma 4.\ref{lemma:459} will have many more special cases than the originals.

\begin{definition}  We say a set $F \subseteq \mathcal{F}$ is proper if there is a tableau $T$ in some $\mathcal{T}(M)$ such that $F = \textup{Shape}(T)$.  Let
$$
\raisebox{1ex}{$F_0=$ \;}
\begin{small}
\begin{tableau}
:.{}.{} \\
:.{}.{} \\
\end{tableau}
\end{small}.
$$
We say a set $F \subseteq \mathcal{F}$ is reducible to $F_0$ if $F$ is proper and if either $F=F_0$ or inductively, there is an extremal position $P$ in $F$ such that $F \setminus P$ is reducible to $F_0$.  If $\T \in \mathcal{T}_D(M)$ for some $M$ and if $\textup{Shape}(\T)$ is reducible to $F_0$, then we say that $\T$ is reducible to $F_0$.
\end{definition}

\begin{remark*} If $\T \in \mathcal{T}(M)$ and if $P$ is an extremal position in $T$, then $\textup{Shape}(T) \setminus P$ is proper, since for example $\textup{Shape}(T) \setminus P = \textup{Shape}(T')$, where $(T',v,\epsilon)=\beta((T,P))$.

\end{remark*}

\begin{proposition} \label{proposition:454}
Suppose $\T \in \mathcal{T}_D(M)$ and write $F = \textup{Shape}(\T)$.

\begin{tenumerate}
    \item If $\T$ is special and $S_{2,2} \in \textup{Shape}(\T)$, then $\T$ is reducible to $F_0$.
    \item Suppose that there are sets $P_1, \ldots, P_k$ such that for each $1 \leq j \leq k$, $P_j$ is an extremal position in $F \setminus \bigcup_{r=1}^{j-1} P_r$, and such that $F \setminus \bigcup_{r=1}^k P_r$ is special and contains $S_{2,2}$.  Then $\T$ is reducible to $F_0$.
    \item If $\T$ is special, $S_{2,2} \in \textup{Shape}(\T)$, $P$ is a boxed set in extremal position in $F$, and $S_{2,2} \not \in P$, then $F \setminus P$ is special and is reducible to $F_0$.
\end{tenumerate}

\end{proposition}
\begin{proof}
We prove (1) by induction on $|M|$.  When $|M|=2$ the result is clear as $\textup{Shape}(\T) = F_0$, so assume $|M|>2.$ Let $k = \kappa_1(\T)$.  Since $\T$ is special, $k$ is even and consequently $k \geq 2$.  Let $u = \rho_{k-1}(\T)$.  If $\rho_{k-1}(\T)=\rho_k(\T)$, then we set $P = \{S_{k-1,u}, S_{k,u}\}$.  If $\rho_{k-1}(\T) > \rho_k(\T)$, then the fact that $\T$ special implies that both are odd, so we set $P = \{S_{k-1,u-1},S_{k-1,u}\}$.  Then $P$ is boxed and $\textup{Shape}(\T) \setminus P$ is special.  Finally,  $S_{2,2} \notin P$, since otherwise our choice of $P$ would imply that $|M|=2$.  Hence by induction, we are done.  Statements (2) and (3) follow from (1).

\end{proof}

\begin{proposition}  \label{lemma:455}
Let $\T \in \mathcal{T}_D(M)$ as suppose $\T$ is reducible to $F_0$.  Then there is a $\T' \in \mathcal{T}_D(M)$ such that $\textup{Shape}(\T')=\textup{Shape}(\T)$ and $n_v(\T') \equiv n_v(\T) +2 \pmod{4}$.

\end{proposition}
\begin{proof}
The proof is by induction of $|M|$.  When $|M|=2$, the proposition is obvious, so assume $|M| >2$.  Write $M =\{m_1,\ldots,m_r\}$ with $m_1 < \cdots < m_r$.  Let $P$ be an extremal position in $\T$ such that $\textup{Shape}(\T) \setminus P$ is reducible to $F_0$.  Now let $(\T_1, v, \epsilon) = \beta((\T,P)).$  Let $i$ be such that $v = m_i$, and let $\T_2 \in \mathcal{T}_D(M \setminus \{e\})$ be the tableau obtained from $\T_1$ by replacing $m_{j+1}$ with $m_j$ for each $i<j<r-1$.  Since $\textup{Shape}(\T_2)=\textup{Shape}(\T_1) = \textup{Shape}(\T) \setminus P$, we have that $\T_2$ is reducible to $F_0$, so by induction there is a $\T_3 \in \mathcal{T}_D(M \setminus \{m_r\})$ with $\textup{Shape}(\T_3) = \textup{Shape}(\T_2)$ and $n_v(\T_3) \equiv n_v(\T_2)+2 \pmod{4}.$ Now if $P$ is horizontal and $\epsilon = -1$, or if $p$ is vertical and $\epsilon = 1$, then we set $\T'= \textup{Adj}(\T_2, P, m_r)$.  If $P$ is horizontal and $\epsilon =1$, or if $P$ is vertical and $\epsilon = -1$, then we set $\T'=\textup{Adj}(\T_3, P, m_r)$.  In all cases we have $\textup{Shape}(\T') = \textup{Shape}(\T)$ and Proposition 4.\ref{proposition:nhnv} shows that $n_v(\T') \equiv n_v(\T)+2 \pmod{4}$.

\end{proof}

\begin{proposition} \label{proposition:456}
Let $\T \in \mathcal{T}_D(M)$ and let $P$ be an extremal position in $\T$.  Suppose $\textup{Shape}(\T) \setminus P$ is reducible to $F_0$.  Then there is a $\T' \in \mathcal{T}_D(M)$ such that $P(\sup M, \T') = P$, $\textup{Shape}(\T') = \textup{Shape}(\T)$, and $n_v(\T') \equiv n_v(\T) \pmod{4}$.

\end{proposition}
\begin{proof}
Write $M =\{m_1,\ldots,m_r\}$ with $m_1 < \cdots < m_r$.  Let $(\T_1,v, \epsilon) = \beta((\T,P))$.  Let $i$ be such that $v=m_i$, and let $\T_2 \in \mathcal{T}_D(M \setminus \{m_r\})$ be the tableau obtained from $\T_1$ by replacing $m_{j+1}$ with $m_j$ for each $i \leq j \leq r-1$.  Since $\T \setminus P$ is reducible to $F_0$, we have that $\T_2$ is reducible to $F_0$, so we apply Lemma 4.\ref{lemma:455} to obtain a $\T_3 \in \mathcal{T}_D(M\setminus \{m_r\})$ with $\textup{Shape}(\T_3) = \textup{Shape}(\T_2)$ and $n_v(\T_3) \equiv n_v(\T_2) +2 \pmod{4}.$  If $P$ is horizontal and $\epsilon =1$, or if $P$ is vertical and $\epsilon = -1$, then we set $\T'=\textup{Adj}(\T_2,P,m_r)$.  If $P$ is horizontal and $\epsilon = -1$ or if $P$ is vertical and $\epsilon = 1$, then we set $\T'=\textup{Adj}(\T_3, P, m_r).$  Proposition 4.\ref{proposition:nhnv} shows that $n_v(\T')=n_v(\T)$, and it is clear that our other requirements for $\T'$ are also satisfied.
\end{proof}

More precisely, the only tableau shapes for which the conclusion of Proposition 4.5.6 fails correspond to partitions of the form $(2k+1,3,1^{2l})$ or $(2k,2^2,1^{2l})$ or $(2k,2)$ or $(2^2,1^{2l})$ in exponential notation, for some $k\ge1$ and $l\ge0$. We next prove a special case of Theorem 4.\ref{theorem:samespecialthensequence}.

\begin{lemma} \label{lemma:457}
Suppose $(\T_1,\T_2)$ and $(\T_1',\T_2')$ are pairs of domino tableaux  in  $\mathcal{T}_D(M_1,M_2)$ with $M_1 = \{1, \ldots, n\}$.  Suppose further that $\Ss(\T_2) = \Ss(\T_2')$, $n_v((\T_1',\T_2')) \equiv n_v((\T_1,\T_2)) \pmod{4}$, and $S_{2,2} \notin \textup{Shape}(\T_1)$.  Then there is a sequence $\Sigma $ for $\Pi* \setminus \{\alpha_1'\}$ such that $$(\T_1', \T_2') = T_\Sigma^L((\T_1,\T_2)).$$

\end{lemma}
\begin{proof}
We first note that our hypotheses in fact imply  not only that $\Ss(\T_2) = \Ss(\T_2')$, but in fact, $\T_2'=\T_2$ and that $n_v((\T_1,\T_2))= n_v((\T_1',\T_2'))$.  We proceed by induction on $n$.  Note that when $n=1$ and $n=2$ the result is trivial since in fact we must have $(\T_1',\T_2') = (\T_1,\T_2)$.  So suppose that $n \geq 3$, letting $P = P(n \T_1)$ and $P'=P(n,\T_1')$.

Assume first that $P = P'$.  Using Definition 3.2.5, we let
$$ (\oT_1,\oT_2) = (\T_1,\T_2) - L \text{ \hspace{.2in} and \hspace{.2in}} (\oT_1',\oT_2') = (\T_1',\T_2') - L.$$
Note that $\oT_2'=\oT_2$.  By induction, there is a sequence $\Sigma$ for $\Pi' \setminus \{\alpha_1',\alpha_n\}$ such that $(\oT_1',\oT_2') = T_\Sigma^L((\oT_1,\oT_2))$.  Set $(\T_1^1, \T_2^1) = T_\Sigma^L((\T_1,\T_2))$. Since $\Sigma$ is a sequence for $\Pi' \setminus \{\alpha_1',\alpha_n\}$, we have $P(n,\T_1^1) = P(n, \T_1)$ and $\T_2^1 = \T_2$.  Consequently $(\T_1^1, \T_2^1) = (\T_1',\T_2')$ and the lemma holds in this case.

Now assume $P \neq P'$.  For simplicity we assume further that $P$ is horizontal; the argument is analogous when $P$ is vertical.  Hence $P = \{S_{1,r-1},S_{1,r}\}$ where $r = \rho_1(\T_1)$, and $P' = \{S_{k-1,1},S_{k,1}\}$, where $k = \kappa_1(\T_1)$.  If $r \geq 5$ or if $r=4$ and $k =3$, set $P_1=\{S_{1,r-3},S_{1,r-2}\}$, otherwise set $P_1 = \{S_{k-3,1},S_{k-2,1}\}$.  Let $(\oT_1,\oT_2)=(\T_1,\T_2)-L$.  By inspection, there is a tableau $\oT \in \mathcal{T}_D(n-1)$ such that $\textup{Shape}(\oT) = \textup{Shape}(\oT_1)$, $P(n-1,\oT) = P'$, and $P(n-2,\oT) = P_1$.  It is also clear that $n_v(\oT) = n_v(\oT_1)$.  By induction there is a sequence $\Sigma_1$ for $\Pi' \setminus \{\alpha_1',\alpha_n\}$ such that $(\oT,\oT_2) = T_\Sigma^L((\oT_1,\oT_2))$.  Let $(\T_1^1,\T_2^1) = T_\Sigma^L((\T_1,\T_2))$.  Then $\T_2^1=\T_2$.  Let $\Sigma_2 = (\alpha_n, \alpha_{n-1})$ and let $(\T_1^2,\T_2^2) = T_{\Sigma_2}^L((\T_1^1,\T_2^1)).$  Then $\T_2^2 = \T_2^1$ and $P(n,\T_1^2) = P(n,\T_1')$.  By the case of the lemma we have already proved, there is a sequence $\Sigma_3$ for $\Pi' \setminus \{\alpha_1',\alpha_n\}$ such that $T_{\Sigma_3}^L((\T_1^2,\T_2^2))=(\T_1',\T_2')$.  Setting $\Sigma = \Sigma_1 \Sigma_2 \Sigma_3$ verifies the lemma in this case.

\end{proof}

\begin{proposition} \label{proposition:458}
Let $(\T_1,\T_2) \in \mathcal{T}_D(M_1,M_2)$ with $M_1 = \{1,\ldots, n\}$ and suppose that $S_{2,2} \in \textup{Shape}(\T_1)$.  Then there is a sequence $\Sigma$ for $\Pi^*$ and $(\T_1',\T_2') \in T_\Sigma^L((\T_1,\T_2))$ such that $\T_2'$ is special.
\end{proposition}
\begin{proof}

The proof uses induction on $n$.  Note that the case $n=0$ is trivial and assume by induction that both Theorem 4.\ref{theorem:samespecialthensequence} and Proposition 4.\ref{proposition:458} are true when $M_1=\{1,\ldots, n-1\}$.  We first assume that the square $S_{2,2} \in P(n,\T_1)$.  Let $r = \rho_1(\T_1)$ and $k = \kappa_1(\T_1)$ and suppose that $\T_2$ is not special.  Then one of the following holds:
\begin{tenumerate}
    \item   $P(n,\T_1) = \{S_{2,1}, S_{2,2}\}$,
    \item $P(n,\T_1) = \{S_{2,2}, S_{3,2}\}$ and $r=2$,
    \item $P(n,\T_1) = \{S_{2,2}, S_{3,2}\}$ and $r>2$, or
    \item $P(n,\T_1) = \{S_{2,2}, S_{2,3}\}$ and $r$ is even.
\end{tenumerate}
We first specify $\Sigma$ in a few low rank cases.  If $n=3$ and we are in case (1), then $\Sigma=(\alpha_1',\alpha_3)$ satisfies the requirements of the proposition.  If $n=3$ and we are in case (1), then we set $\Sigma = (\alpha_3,\alpha_1')$.  When $n=4$ and we are in case (3) or (4), then $(\T_1,\T_2) \in D_{\alpha,X}(\mathcal{T}_D(M_1,M_2))$ for some $X \in \{C,D\}$ and every $\alpha \in \{\alpha_1',\alpha_2,\alpha_4\}$.  Either $\Sigma=(\alpha_1',X)$ or $(\alpha_2,X)$ will satisfy the requirement of the proposition.

Now assume that we are not in one of the cases treated above.  The sequence $\Sigma$ will be specified as a composition in several steps. We begin by specifying pairs of squares $P_1$ and $P_2$.  In case (1) and $n \geq 4$, we note that $r \geq 6$ and set $P_1 = \{S_{1,r-1},S_{1,r}\}$ and $P_2 = \{S_{1,r-3},S_{1,r-2}\}$.  In case (2) we set $P_1=\{S_{k-1,1},S_{k,1}\}$ and $P_2=\{S_{k-3,1},S_{k-2,1}\}$.  In cases (3) and (4) and $r \geq 5$ we let $P_1$ and $P_2$ be as in case (1), otherwise let $P_1$ and $P_2$ be as in case (2).

Now let $(\oT_1,\oT_2) = (\T_1,\T_2) - L$ and let $\oT_1^1 \in \mathcal{T}_D(n-1)$ be such that $\textup{Shape}(\oT_1^1)=\textup{Shape}(\oT_1)$, $P(n-1,\oT_1^1) = P_1$, and $P(n-2,\oT_1^1) = P_2$.  It is clear that such a $\oT_1^1$ exists and that $n_v(\oT_1^1) = n_v(\oT_1)$.  Set $\oT_2^1 = \oT_2$.  But now $S_{2,2} \notin \textup{Shape}(\oT_1)$, so by Lemma 4.\ref{lemma:457}, there is a sequence $\Sigma_1$ for $\Pi' \setminus \{\alpha_1', \alpha_n\}$ such that $(\oT_1^1,\oT_2^1) = T_{\Sigma_1}^L((\oT_1,\oT_2)).$  Let $(\T_1^1, \T_2^1) = \T_{\Sigma_1}^L((\T_1,\T_2))$.  Then let $\Sigma_2$ be such that $\Sigma_2 = (\alpha, \beta)$ with $\{\alpha, \beta\} = \{\alpha_{n-1}, \alpha_n\}$ and $T_{\Sigma_2}^L((\T_1^1, \T_2^1)) \neq \varnothing$, and set $(\T_1^2, \T_2^2) = T_{\Sigma_2}^L((\T_1^1, \T_2^1))$.  Let $(\oT_1^2,\oT_2^2) = (\T_1^2, \T_2^2) - L$.  Then $S_{2,2} \in \textup{Shape}(\oT_1^2)$, so by induction there is a sequence $\Sigma_3$ for $\Pi^* \setminus \{\alpha_n\}$ and a $(\oT_1', \oT_2') \in T_{\Sigma_3}^L((\oT_1^2,\oT_2^2))$ such that $\oT_2'$ is special.  Let $(\T_1',\T_2') \in T_{\Sigma_3}^L((\T_1^2, \T_2^2))$ be such that $(\T_1',\T_2') = (\oT_1',\oT_2') - L$.  Then, as is easily seen using Lemma 3.2.7 (1), we have that $\oT_1'$ is special, so setting $\Sigma = \Sigma_1 \Sigma_2 \Sigma_3$ verifies the conclusion of the proposition.

Henceforth assume that $S_{2,2} \notin P(n,\T_1)$.  We begin as in the proof of Proposition 3.2.4, that is, we let $(\oT_1,\oT_2) = (\T_1, \T_2) - L$.  By induction, there is a sequence $\Sigma_1$ for $\Pi^* \setminus \{\alpha_n\}$ and a $(\oT_1^1,\oT_2^1) \in T_{\Sigma_1}^L((\oT_1,\oT_2))$ with $\oT_2^1$ special.  By Lemma 3.2.6, there is a $(\T_1^1, \T_2^1) \in T_{\Sigma_1}^L((\T_1,\T_2))$ such that $(\oT_1^1,\oT_2^1) =(\T_1^1,\T_2^1) -L$.  Now clearly either $\T_2^1$ is special, or $P(n,\T_1^1)$ is unboxed  and $\{n\}$ forms and extended cycle in $\T_1^1$ relative to $\T_2^1$.  In the former case we are done, so we assume the latter.  Set $P =P(n,\T_1^1)$.  Since $S_{2,2}$ is $\phi_D$-fixed, Lemma 3.2.7 (1) implies that $S_{2,2} \notin P.$

The rest of the proof will follow the general outline of Proposition 3.2.4, although lacking Proposition 3.1.10, we will treat separately the situations where $P$ is horizontal and $P$ is vertical. The proof proceeds by a case-by-case analysis. For the most part, the cases we consider parallel those of the proof Proposition 3.2.4 and are numbered accordingly.  In cases (1)-(6), we define positions $P_1$ and $P_2$ and appeal to Propositions 4.\ref{proposition:454} and 4.\ref{proposition:456} to construct a tableau $\oT_1^2 \in \mathcal{T}_D(n-1)$ with $\sh(\oT_1^2) = \sh(\oT_1^1)$, $P(n-1, \oT_1^2)= P_1$, $P(n-2, \oT_1^2)=P_2$, and $n_v(\oT_1^2) \equiv n_v(\oT_1^1) \pmod{4};$ the rest of the proof in these cases will follow as in the proof of Proposition 3.2.4.  Cases (3a) and (7) will parallel case (7) of the proof of Proposition 3.2.4 and will be treated directly.  Finally, cases (3'a) and (6'a) will require a different approach.

Assume first that $P$ is horizontal and set $P=\{S_{ij},S_{i,j+1}\}$.  Then $\phi_D(S_{ij}) = Y$ and both $i$ and $j$ are odd.  First assume $i \geq 3$  and set $r=\rho_{i-2}(\T_1^1)$ and $s= \rho_{i-1}(\T_1^1)$.

\vspace{.1in}

{\it Case 1}.  Here $r>s$.  As in case (1) of the proof of Proposition 3.2.4, we have $r \geq s+2$ and $s \geq j+2$.  We set $P_1 = \{ S_{i-2, r-1}, S_{i-2,r}\}$.  If $i \geq 5$, or if $i=3$ and $s>3$, we set $P_2 = \{S_{i-1,s-1},S_{i-1,s}\}.$  If $i=3$ and $s=3$ and $r >5$, so in fact $r \geq 7$, we let $P_2 = \{S_{i-2,r-3}, S_{i-2,r-2}\}$.  Finally, if $i=3$, $s=3$, and $r=5$, we set $P_2 = \{S_{1,3},S_{2,3}\}$.

\vspace{.1in}

{\it Case 2.} Here $r=s > j+1$.  We set $P_1= \{S_{i-2,r},S_{i-1,r}\}$.  If $i \geq 5$, or if $i=3$ and $r>3$, we set $P_2 =\{S_{i-2,r-1},S_{i-1,r-1}\}$.  If $i=3$ and $r=3$, then $n=4$. If $n_v(\T_1^1) = 3$, we define $P_2$ as before, otherwise let $P_2 = \{S_{2,1}, S_{2,2}\}$.

\vspace{.1in}

{\it Case 3.} Here $r=s=j+1$ and we assume further that either $i \geq 5$ or $i=3$ and $j >1$.  Let $P_1$ and $P_2$ be as in the first part of case (2).

\vspace{.1in}

{\it Case 3a.} Here $r=s=j+1$, $i=3$, and $j=1$ and we can specify the sequence $\Sigma$ more directly. We have that either

$$
\raisebox{2ex}{$\T_1^1=$ \;}
\begin{small}
\begin{tableau}
:>1\\
:>2\\
:>3\\
\end{tableau}
\end{small}
\raisebox{2ex}{\hspace{.2in}  or \hspace{.2in} $\T_1^1=$ \;}
\begin{small}
\begin{tableau}
:^1^2\\
:;\\
:>3\\
\end{tableau}
\end{small}
\raisebox{2ex}{ .}
$$
In the first case, we set $\Sigma_2 = (\alpha_1', \alpha_3)$, in the second $\Sigma_2 = (\alpha_2, \alpha_3),(\alpha_3, \alpha_1')$.  Then $\Sigma= \Sigma_1 \Sigma_2$ satisfies the requirements of the proposition.

\vspace{.1in}

In the next four cases we are still operating under the assumption that $P$ is horizontal, but now $i=1$ and $r$ as well as $s$ will be redefined as needed.

\vspace{.1in}

{\it Case 4}.  Here $S_{3,j-1} \in \sh(\T_1^1)$.  Let $r = \kappa_{j-1}(\T_1^1).$ Then because $\oT_1^1$ is special, we know that $r$ is even, and in particular, $r \geq 4$.  If $j>3$, or if $j=3$ and $r \geq 6$, we let $P_1 = \{S_{r-1,j-1},S_{r,j-1}\}$ and $P_2=\{S_{r-3,j-1},S_{r-2,j-1}\}$.  If $j=3$ and $r=4$, we set $k = \kappa_1(\T_1^1)$ and note that $k$ is even since $\oT_1^1$ is special.  If $k \geq 6$, we set $P_1=\{S_{k-1,1},S_{k,1}\}$ and $P_2=\{S_{3,2},S_{4,2}\}$.  If $k=4$, we set $P_1 = \{S_{4,1},S_{4,2}\}$ and $P_2=\{S_{3,1},S_{3,2}\}$.

\vspace{.1in}

{\it Case 5.} Here $S_{3,j-1} \notin \sh(\T_1^1)$ and $S_{3,j-2} \in \sh(\T_1^1).$  If $j>3$, so that $j \geq 5$, we define $P_1$ and $P_2$ as in case (5) of the proof of Proposition 3.2.4.  If $j=3$, we set $k = \kappa_1(\T_1^1)$ and note that $k$ is again even.  We set $P_1 = \{S_{k-1,1},S_{k,1}\}$.  If $k \geq 6$, we set $P_2= \{S_{k-3,1},S_{k-2,1}\}$.  If $k=4$, then $n =4$, and we have to account for the number of horizontal dominos.  If $n_h(\oT_1^1)=2$, we set $P_2=\{S_{2,1},S_{2,2}\}$; if $n_h(\oT_1^1)=0$, we set $P_2=\{S_{1,2},S_{2,2}\}$.

\vspace{.1in}

{\it Case 6.} Here $S_{3,j-2} \notin \sh(\T_1^1)$ and $j>3$, so that $j \geq 5$.  We set $P_1=\{S_{2,j-2},S_{2,j-1}\}$ and $P_2=\{S_{1,j-2},S_{1,j-1}\}$.

\vspace{.1in}

{\it Case 7.} Here $j=3$ and $S_{3,1} \notin \sh(\T_1^1)$  and we can specify the sequence $\Sigma$ more directly. We have that either

$$
\raisebox{1ex}{$\T_1^1=$ \;}
\begin{small}
\begin{tableau}
:>1>3\\
:>2\\
\end{tableau}
\end{small}
\raisebox{1ex}{\hspace{.1in}  or \hspace{.2in} $\T_1^1=$ \;}
\begin{small}
\begin{tableau}
:^1^2>3\\
:;\\
\end{tableau}
\end{small}
\raisebox{1ex}{ .}
$$
In the first case we set $\Sigma_2 = (\alpha_3,\alpha_2),(\alpha_1',\alpha_3)$, in the second we set $\Sigma_2 = (\alpha_3,\alpha_1')$.  Then $\Sigma = \Sigma_1 \Sigma_2$ satisfies the requirements of the proposition.

\vspace{.1in}

This exhausts the cases when $P$ is horizontal and henceforth we suppose that $P$ is vertical, setting $P=\{S_{ji},S_{j+1,i}\}$.  We note $\phi_D(S_{ji}) = Z$ which under our assumptions implies that both $i$ and $j$ are even.  For the first three cases we will assume that $i \geq 4$ and set $r=\kappa_{i-2}(\T_1^1)$ and $s=\kappa_{i-1}(\T_1^1)$.

\vspace{.1in}

{\it Case 1$'$.}  Here $r>s$, and as in case (1), we have $r \geq s+2$ and $s \geq j+2$.  We set $P_1=\{S_{r-1,i-2},S_{r,i-2}\}$ and $P_2=\{S_{s-1,i-1},S_{s,i-1}\}$.

\vspace{.1in}

{\it Case 2$'$.} Here $r=s>j+1$.  We  define the domino positions $P_1 = \{S_{r,i-2},S_{r,i-1}\}$ and $P_2=\{S_{r-1,i-2},S_{r-1,i-1}\}$.

\vspace{.1in}

{\it Case 3$'$.} Here $r=s=j+1$, and setting $k = \kappa_1(\T_1^1)$ and $t = \rho_1(\T_1^1)$, we require further that either $i \geq 6$, $j \geq 4$, $k >4$, or $t>5$.  If $i \geq 6$ or $j \geq 4$, we let $P_1$ and $P_2$ be as in case 2$'$.  Now assume that $i=4$ and $j=2$.  Since $\oT_1^1$ is special, we know that $k$ is even and $t$ is odd.  If $k>4$, we set $P_1= \{S_{3,2}, S_{3,3}\}$ and $P_2 = \{S_{k-1,1},S_{k,1}\}$.  If $k =4$ and $t > 5$, we let $P_1=\{S_{t-1,1},S_{t,1}\}$ and $P_2=\{S_{t-3,1},S_{t-2,1}\}$.

\vspace{.1in}

{\it Case 3$'$a.} Here $i=4$, $j=2$, $\kappa_1(\T_1^1) = 4$, and $\rho_1(\T_1^1) =5$.  We cannot apply the standard argument to this case as the hypotheses of Proposition 4.\ref{proposition:456} do not hold at one of the steps, so we address it directly.  Let $\oT$ be the tableau:

$$
\raisebox{1ex}{$\oT=$ \;}
\begin{small}
\begin{tableau}
:^1>2>4\\
:;>5\\
:^3>6\\
\end{tableau}
\end{small}
$$

\vspace{.1in}
\noindent
If $n_h(\oT_1^1)=4$, set $(\oT_1^2,\oT_2^2) = (\oT,\oT_2^1)$.  If on the other hand we have $n_h(\oT_1^1)=2$, set
$(\oT_1^2,\oT_2^2) = \EE((\oT,\oT_2^1),ec(1,\oT;\oT_2^1),L).$
Then $\Ss(\oT_2^2) = \Ss(\oT_2^1)$ and $n_v((\oT_1^2,\oT_2^2)) = n_v((\oT_1^1,\oT_2^1))$, where we appeal to Proposition 4.\ref{proposition:nhmt} for the latter if necessary.  By induction on Theorem 4.\ref{theorem:samespecialthensequence}, there is a sequence $\Sigma_2$ for $\Pi^* \setminus \{\alpha_7\}$ such that $(\oT_1^2, \oT_2^2) \in T_{\Sigma_2}^L((\oT_1^1,\oT_2^1))$.  Let $(\T_1^2,\T_2^2) \in T_{\Sigma_2}^L((\T_1^1,\T_2^1))$ be such that $(\oT_1^2,\oT_2^2) = (\T_1^2,\T_2^2) - L$.  Let $\Sigma_3 = (\alpha_7, \alpha_6)$ and define $(\T_1^3,\T_2^3) = T_{\Sigma_3}^L((\T_1^2,\T_2^2))$.  Further, let $(\oT_1^3,\oT_2^3) = (\T_1^3,\T_2^3) - L$.  Then by induction on Proposition 4.\ref{proposition:458}, there is a sequence $\Sigma_4$ for $\Pi^* \setminus \{\alpha_7\}$ and a $(\oT_1',\oT_2') \in T_{\Sigma_4}^L((\oT_1^3,\oT_2^3))$ such that $\oT_2'$ is special.  Let $(\T_1',\T_2') \in T_{\Sigma_4}^L((\T_1^3,\T_2^3))$ be such that $(\oT_1',\oT_2') = (\T_1', \T_2') - L$.  Using Lemma 3.2.7-1 and the fact that $S_{4,2} \notin \textup{Shape}(\T_1')$, we have that $P(7,\T_1')$ is $\{S_{3,2},S_{3,3}\}$, so $\T_1'$ is special.  That is, $\Sigma = \Sigma_1 \Sigma_2 \Sigma_3 \Sigma_4$ satisfies the requirement of the proposition.

\vspace{.1in}

In the remaining cases we have $i=2$.  Since $j$ must be even and we have assumed that $S_{2,2} \notin P(n,\T_1)$ and consequently $S_{2,2} \notin P(n,\T_1^1)$, we have that $j \geq 4$.

\vspace{.1in}

{\it Case 4$'$.}  Here $S_{j-1,4} \in \textup{Shape}(\T_1^1)$.  Let $t = \rho_{j-1}(\T_1^1)$ and note that since $\oT_1^1$ has special shape, $t$ must be odd and consequently $t \geq 5$.  We define the domino positions $P_1= \{ S_{j-1,t-1}, S_{j-1,t}\}$ and $P_2=\{S_{j-1,t-3},S_{j-1,t-2}\}$.

\vspace{.1in}

{\it Case 5$'$.}  Here $S_{j-1,4} \notin \textup{Shape}(\T_1^1)$ but $S_{j-2,4} \in \textup{Shape}(\T_1^1)$. We proceed as in case 5, but transposing everything.

\vspace{.1in}

{\it Case 6$'$.}  Here $S_{j-2,4} \notin \textup{Shape}(\T_1^1)$, and letting $k=\kappa_1(\T_1^1)$ and $t=\rho_1(\T_1^1)$, we further assume that either $j>4$, $j=4$ and $k > 6$, or $j=4$, $k=6$, and $t \geq 5$.  If $j>4$, we set $P= \{S_{j-2,3},S_{j-1,3}\}$ and $P_2=\{S_{j-2,2},S_{j-1,2}\}$.  If $j=4$ and $k>6$, we set $P= \{S_{k-1,1},S_{k,1}\}$ and $P_2=\{S_{k-3,1},S_{k-2,1}\}$.  Finally, if $j=4$, $k=6$, and $t \geq 5$, we set $P= \{S_{1,t-1},S_{1,t}\}$ and $P_2=\{S_{3,2},S_{3,2}\}$.

\vspace{.1in}

{\it Case 6$'$a.}  We treat this case in the same fashion as case 3$'$a.  We begin by defining a domino tableau
$$
\raisebox{5ex}{$\oT=$ \;}
\begin{small}
\begin{tableau}
:^1>3\\
:;^5^6\\
:^2\\
:;\\
:^4\\
\end{tableau}
\end{small}
$$

\vspace{.1in}

It is the image of the transpose of the tableau in case 3$'$a under moving through the cycle containing the domino with label 1.  We can now follow the steps of the proof of case 3$'$a to construct the desired sequence $\Sigma$.

\end{proof}

\begin{lemma}\label{lemma:459} Let $(\T_1,\T_2)$ be as in Theorem 4.\ref{theorem:samespecialthensequence} and suppose that $\T_2$ is special and $S_{2,2} \in \sh(\T_1).$  Let $P'$ be an extremal position in $\T_1$ and suppose that there exists a tableau $\T \in \mathcal{T}_D(n)$ such that $\sh(\T) = \sh(\T_1)$, $P(n,\T) = P'$, and $n_v(\T) \equiv n_v(\T_1) \pmod{4}$.  Then there is a sequence $\Sigma$ for $\Pi^*$ and a $(\T_1',\T_2') \in T_\Sigma^L((\T_1,\T_2))$ such that $P(n,\T_1') = P'$ and $\T_2' = \T_2$.
\end{lemma}
\begin{proof}

As in the proof of Proposition 4.\ref{proposition:458}, we will assume by induction on $n$ that Theorem 4.\ref{theorem:samespecialthensequence} is true when $M= \{1,\ldots, n-1\}$.  Let $P=P(n,\T_1)$.  We may assume that $P' \neq P$.  We first prove the lemma under the additional assumption that both $P$ and $P'$ are boxed.

We will prove this portion of the lemma along the general lines of the corresponding portion of Lemma 3.2.9.
In place of Lemma 3.2.8 used in \cite{garfinkle3}, we will appeal to Propositions 4.\ref{proposition:454} and 4.\ref{proposition:456}.  In what follows, let $(\oT_1,\oT_2) = (\T_1,\T_2) - L$.


A majority of the cases employ the same approach, which we call the standard argument.  In each such case we define a set $P_1$ which does not contain $S_{2,2}$ and proceed as follows.  If $P_1$ is boxed, then by Propositions 4.\ref{proposition:454} and 4.\ref{proposition:456}, there is a tableau $\oT_1^1$ whose shape matches that of $\oT_1$, $P(n-1,\oT_1^1) = P'$, and $P(n-2,\oT_2^1) = P_1$.  By induction on Theorem 4.\ref{theorem:samespecialthensequence}, there is a sequence $\Sigma_1$ for $\Pi^* \setminus \{\alpha_n\}$ and $(\oT_1^1, \oT_2^1) = T_{\Sigma_1}^L((\oT_1,\oT_2))$ with $\oT_2^1=\oT_2$.  Using Lemma 3.2.6,  let $(\T_1^1,\T_2^1) \in T_{\Sigma_1}^L((\T_1,\T_2))$ satisfying $(\oT_1^1,\oT_2^1) = (\T_1^1,\T_2^1) - L$.
Lemma 3.2.7 now allows us to describe $P(n,\T_1^1)$ and conclude $\T_2^1 = \T_2$.
At this point, we can let $\Sigma_2 = (\alpha_{n-1},\alpha_n)$ or $(\alpha_n, \alpha_{n-1})$. Setting $\Sigma= \Sigma_1 \Sigma_2$ and $(\T_1',\T_2') = T_{\Sigma_2}^L((\T_1^1, \T_2^2))$ completes the argument.
If the suggested $P_1$ is not boxed, we let $\widetilde{P}_1$ be the domino position sharing its bottom right square with the bottom right square of $P_1$.  In each case where this occurs, $\widetilde{P}_1$ is boxed and the above argument can be made with $\widetilde{P}_1$ in place of $P_1$.


When the standard argument applies, we will simply provide the appropriate $P_1$.  Otherwise, we will provide additional details.
In the first four cases, we assume that both $P$ and $P'$ are horizontal and set $P=\{S_{i,j},S_{i,j+1}\}$ and $P'=\{S_{k,l},S_{k,l+1}\}$.  Since $P$ and $P'$ are boxed, we have that $j$ and $l$ are both even.

\vspace{.1in}
{\it Case A}.  Here $k=i-1$.  If $i>3$, $i=3$ and $l \geq 6$, or $i=2$ and $j \geq 4$, let $P_1=\{S_{i-1,l-2},S_{i-1,l-1}\}$ and apply the standard argument.  If $i=2$ and $j=2$, we define $P_1$ as above, and modify the standard argument by noting that the fact that the desired tableau $\oT_1^1$ exists can be verified simply by inspection.
When $i=3$ and $l=4$ we examine several possibilities.  For the remainder of this case, let $r=\rho_1(\T_1)$ and $s= \kappa_1(\T_1)$.
  \begin{itemize}
    \item[(a)]  Suppose $s \geq 6$.  Let $P_1 = \{S_{s-1,1},S_{s,1}\}$, $P_2= \{S_{s-3,1},S_{s-2,1}\}$ and $(\oT_1,\oT_2)=(\T_1,\T_2)-L$.
        We begin as in the standard argument and construct a sequence $\Sigma_1$ for $\Pi^* \setminus \{\alpha_n\}$ and $(\T_1^1,\T_2^1) \in T_{\Sigma_1}^L((\T_1,\T_2))$ with $P(n-1,\T_1^1)=P_1$, $P(n-2,\T_1^1)= P_2$, $P(n,\T_1^1) = P$, and $\T_2^1 = \T_2$
        Let $\Sigma_2 = (\alpha_{n-1},\alpha_n)$ and write $(\T_1^2,\T_2^2) = T_{\Sigma_2}^L( \T_1^1,\T_2^1)$.  Following the same procedure, we can find a sequence $\Sigma_3$ for $\Pi^* \setminus \{\alpha_n\}$ and $(\T_1^3,\T_2^3) \in T_{\Sigma_3}^L((\T_1^2,\T_2^2))$ with  $P(n-1,\T_1^3)=P'$,  $P(n-2,\T_1^3)= P_2$, $P(n,\T_1^3)= P_1$ and $\T_2^3 = \T_2$.  Letting $\Sigma_4=(\alpha_{n-1},\alpha_{n})$ and $\Sigma=\Sigma_1 \Sigma_2 \Sigma_3 \Sigma_4$, we obtain the desired result.

    \item[(b)] Suppose $r \geq 7$ and let $P_1 = \{S_{1,r-1},S_{1,r}\}$ and $P_2 = \{S_{1,r-3},S_{1,r-2}\}$.  We argue as in (a) and first construct a sequence $\Sigma_1$ for $\Pi^* \setminus \{\alpha_n\}$ and $(\T_1^1,\T_2^1) \in T_{\Sigma_1}^L((\T_1,\T_2))$ satisfying $P(n,\T_1^1) = P$, $P(n-2, \T_1^1) = P'$, and $P(n-1, \T_1^1) = P_1.$  Let $\Sigma_2 = (\alpha_n, \alpha_{n-1})$ and write $(\T_1^2,\T_2^2) = T_{\Sigma_2}^L((\T_1^1,\T_2^1))$.  Repeating the argument, we construct a sequence $\Sigma_3$ for $\Pi^* \setminus \{\alpha_n\}$ and $(\T_1^3,\T_2^3) \in T_{\Sigma_3}^L((\T_1^2,\T_2^2))$ with $P(n,\T_1^3) = P_1$, $P(n-2, \T_1^3) = P_2$, and $P(n-1, \T_1^3) = P'.$  Letting $\Sigma_4=(\alpha_{n},\alpha_{n-1})$ and $\Sigma=\Sigma_1 \Sigma_2 \Sigma_3 \Sigma_4$, we obtain the desired result.

 \item[(c)] Let $r=5$ and $s=4$.  Define a tableau as follows:
    $$
\raisebox{3ex}{$\oT=$ \;}
\begin{small}
\begin{tableau}
:^1 >3>4 \\
:;>5>6\\
:^2\\
\end{tableau}
\end{small}
$$

\vspace{.1in}

\noindent
and let
$$
 (\oT^1_1,\oT^1_2)=
  \begin{cases}
       (\oT, \oT_2)    \hfill & \text{if $n_v(\oT_1) \equiv n_v(\oT) \pmod{4}$} \\
       \EE((\oT,\oT_2),ec(1,\oT;\oT_2),L) \hfill & \text{ otherwise.} \\
  \end{cases}
$$
\noindent
Then $\Ss(\oT_2^1) = \oT_2$ and furthermore $n_v((\oT_1^1,\oT_2^1)) \equiv n_v((\oT_1,\oT_2)) \pmod{4}$.  Working inductively using Theorem 4.\ref{theorem:samespecialthensequence}, we can find a sequence $\Sigma_1$ for  $\Pi^* \setminus \{\alpha_n\}$ such that
$(\oT_1^1,\oT_2^1) \in T_{\Sigma_1}^L(\oT_1,\oT_2).$  Let $(\T_1^1,\T_2^1) \in T_{\Sigma_1}^L((\T_1,\T_2))$ be such that $(\oT_1^1,\oT_2^1) = (\T_1^1,\T_2^1) - L$.
By Lemma 3.2.7, we know that $P(7, \T_1^1) = P(7,\T_1) $ or $P'(7,\T_1)$.  In the former case let $\Sigma_2=(\alpha_6,\alpha_7)$, otherwise we define $\Sigma_2=(\alpha_6,\alpha_7), (\alpha_4, \alpha_5), (\alpha_4, C)$, and write $(\T_1^2, \T_2^2) = T_{\Sigma_2}^L((\T_1^1,\T_2^1))$.  If $\T_2^2$ is special, then $P(7,\T_1^2) = P'$ and $\Sigma=\Sigma_1 \Sigma_2$.  If not, note that $P(7,\T_1^2)$ is boxed and let $(\oT_1^2, \oT_2^2) = (\T_1^2, \T_2^2) - L$.  Then by Proposition 4.\ref{proposition:458}, there is a sequence $\Sigma_3$ for $\Pi^* \setminus \{\alpha_7\}$ and $(\oT_1^3, \oT_2^3) \in T_{\Sigma_3}^L(\oT_1^2, \oT_2^2)$ with $\oT_2^3$ special.  Let $(\T_1^3, \T_2^3) \in T_{\Sigma_3}^L(\T_1^2, \T_2^2)$ be such that $(\T_1^3, \T_2^3) - L = (\oT_1^3, \oT_2^3)$.  Since $P(7,\T_1^2)$ is boxed, Lemma 3.2.7-4 implies $P(7,\T_1^3) = P'$ and $\Sigma=\Sigma_1 \Sigma_2 \Sigma_3$ gives the desired result.

\end{itemize}

\vspace{.1in}
{\it Case B}.  Here $k<i-1$.  Set $r=\rho_{i-1}(\T_1)$ and note that $r \geq j+1 \geq 3$.  If $i>3$ or $r>3$, then set $P_1=\{S_{i-1,r-1},S_{i-1,r}\}$.  If $i=3,$ $r=3$, and $l\geq 6$, let $P_1= \{S_{1,l-2},S_{1,l-1}\}$.  If $i=3$, $r=3$, and $l=4$, set $P_1=\{S_{1,3},S_{2,3}\}$.


\vspace{.1in}
{\it Case C}.  Here $k=i+1$. If $i \geq 3$, $i=2$ and $j \geq 6$, or $i=1$ and $l \geq 4$, let $P_1 = \{S_{i,j-2},S_{i,j-1}\}.$  If $i=1$ and $l=2$, define $P_1$ in the same way and note that by one of the hypotheses of the lemma the vertical dominos of $\T_1$ must all lie in its first column.

The final cases arise when $i=2$ and $j=4$. Let $r=\rho_1(\T_1)$ and $s= \kappa_1(\T_1)$.  When $s \geq 6$, we proceed as in part (a) of case (A) only using the inverses of $\Sigma_2$ and $\Sigma_4$ instead. If $r \geq 7$ we proceed as in part (b) of case (A), reversing the argument and again using the inverses of $\Sigma_2$ and $\Sigma_4$.  When $r=5$ and $s=4$, we proceed as in part (c) of case (A), this time using the tableau
    $$
\raisebox{3ex}{$\oT=$ \;}
\begin{small}
\begin{tableau}
:^1 >3>4 \\
:;>5\\
:^2>6\\
\end{tableau}
\end{small}
$$

\vspace{.1in}

\noindent
and $\Sigma_2=(\alpha_7, \alpha_6)$ or $\Sigma_2= (\alpha_7, \alpha_6), (\alpha_4,\alpha_5),(\alpha_4,C)$.   If a $\T_2^2$ constructed in this way is special, then $P(7,\T_1^2) = P'$ and we are done.  Otherwise,  we continue as in part (c) defining $\Sigma_3$, this time noting that since $S_{3,2} \not \in \oT_1^3$ as $\oT_2^3$ is special, Lemma 3.2.7-1 forces $P(7, \T_1^3) = P'$.

\vspace{.1in}
{\it Case D}.  Here $k>i+1$.  Let $r=\rho_{k-1}(\T_1)$ and note that $r \geq l+1 \geq 3$.  If either $k \neq 3$ or $ r \neq 3$, let
$P_1=\{S_{k-1,r-1},S_{k-1,r}\}$.  If $k=3$, $r=3$, and $j>4$, then let $P_1=\{S_{1,j-2},S_{1,j-1}\}$.  If $k=3$, $r=3$, and $j=4$, we set $P_1=\{S_{1,3},S_{2,3}\}$.


\vspace{.1in}

In the next six cases we assume that $P$ is horizontal and $P'$ is vertical, setting $P=\{S_{ij},S_{i,j+1}\}$ and $P'=\{S_{kl},S_{k+1,l}\}$.  Since $P$ and $P'$ are boxed, we have that $j$ is even and $k$ is odd.

\vspace{.1in}
{\it Case E}. Here $k+1=i-1$.  We set $P_1=\{S_{i-2,l-1},S_{i-1,l-1}\}$.

\vspace{.1in}
{\it Case F}. Here $k+1<i-1$.  Let $r=\rho_{i-1}(\T_1)$ and set $P_1=\{S_{i-1,r-1},S_{i-1,r}\}$.

\vspace{.1in}
{\it Case G}. Here $k=i+1$ and $l < j-1$.  If $i \neq 2$ or $j \neq 4$, set $P_1=\{S_{i,j-2},S_{i,j-1}\}$.  When $i=2$ and $j=4$ we examine several possibilities which parallel the subcases of case (A).  In what follows, let $r=\rho_1(\T_1)$ and $s= \kappa_1(\T_1)$.

 \begin{enumerate}
    \item[(a)] If $r \geq 7$, let $P^*=\{S_{1,r-1},S_{1,r}\}$.  We can first use case (A) to move $n$ to $P^*$ and then employ the standard argument with $P_1= P$ to move $n$ to $P'$.

    \item[(b)] If $s \geq 6$, we let $P_1=\{S_{s-1,1}, S_{s,1}\}$ and $P_2=\{S_{s-3,1},S_{s-2,1}\}$.  Arguing as usual, we find a sequence $\Sigma_1$ for $\Pi^* \setminus \{\alpha_n\}$ and $(\T_1^1,\T_2^1) \in T_{\Sigma_1}^L(\T_1,\T_2)$ with $P(n-2,\T_1^1) = P'$, $P(n-1,\T_1^1) = P_1$, and $P(n-2,\T_1^1) = P'$.  Let $\Sigma_2=(\alpha_n,\alpha_{n-1})$ and $(\T_1^2,\T_2^2) = T_{\Sigma_2}^L((\T_1^1,\T_2^1))$.  Continuing, we can find a sequence $\Sigma_3$ for $\Pi^* \setminus \{\alpha_n\}$ and $(\T_1^3,\T_2^3) \in T_{\Sigma_3}^L(\T_1^2,\T_2^2)$ with $P(n-2,\T_1^3) = P_2$, $P(n-1,\T_1^3) = P'$, and $P(n,\T_1^3) = P_1$.  Letting $\Sigma_4=(\alpha_{n-1},\alpha_n)$ and $\Sigma = \Sigma_1 \Sigma_2 \Sigma_3 \Sigma_4$ completes the proof.

    \item[(c)] If $r=5$ and $s=4$, define the tableau $\oT$ by

    $$
\raisebox{5ex}{$\oT=$ \;}
\begin{small}
\begin{tableau}
:^1>{2}>{3}\\
:;>{4}\\
:^5\\
:;\\
\end{tableau}
\end{small}
\raisebox{5ex}{\hspace{.2in} or \hspace{.2in} $\oT=$ \hspace{.2in}}
\begin{small}
\begin{tableau}
:^1>{3}>{4}\\
:;>{5}\\
:^{2}^6\\
:;\\
\end{tableau}
\end{small}
$$

\noindent
if $l=1$ or $l=2$, respectively.  In either case, if  $n_v(\oT_1) \equiv n_v(\oT) \pmod{4}$
we can use a version of the standard argument with $\Sigma_2 = (\alpha_n,\alpha_{n-1})$.  So suppose $n_v(\oT_1) \not \equiv n_v(\oT) \pmod{4}$ and let
$(\oT^1_1,\oT^1_2) = \EE((\oT,\oT_2),ec(1,\oT;\oT_2),L)$.   We first examine $l=1$, noting that $S_{2,3} \not \in \oT_1^1$.  As usual,  we can find a sequence $\Sigma_1$ for  $\Pi^* \setminus \{\alpha_n\}$ such that
$(\oT_1^1,\oT_2^1) \in T_{\Sigma_1}^L(\oT_1,\oT_2).$  Let $(\T_1^1,\T_2^1) \in T_{\Sigma_1}^L((\T_1,\T_2))$ be such that $(\oT_1^1,\oT_2^1) = (\T_1^1,\T_2^1) - L$. By Lemma 3.2.7, we know that $P(6, \T_1^1) = \{S_{2,3},S_{2,4}\}$. If $P(5,\T_1^1)$ is vertical, let the sequence $\Sigma_2 = (\alpha_6,\alpha_5)$, otherwise let
$\Sigma_2 = (\alpha_6,\alpha_5), (\alpha_3,\alpha_4), (\alpha_1',\alpha_3)$ and write $(\T_1^2,\T_2^2) = T_{\Sigma_2}^L((\T_1^1,\T_2^1))$.  In the latter case, $\T_2^2 = \T_2$ and $P(6,\T_1^2) = P'$, as desired.  In the former, $\T_2^2$ is not special and we continue as in part (c) of case (A), again using Lemma 3.2.7-4 to show $P(6,\T_1^3) = P'$.  When $l=2$, we argue as above but let $\Sigma_2 =(\alpha_7,\alpha_6),(\alpha_4, \alpha_5), (\alpha_4,C)$, noting that for one $(\T_1^2,\T_2^2) \in T_{\Sigma_2}^L((\T_1^1,\T_2^1))$ we have $\T_2^2 = \T_2$ and $P(7,\T_1^2) = P'$, as desired.
    \end{enumerate}


\vspace{.1in}
{\it Case H}. Here $k>i+1$.  Let $s=\rho_{i+1}(\T_1)$.  We first consider the case $s\leq j-3$.   If $i=2$ and $j=4$, then $l=1$ and $k \geq 5$ and we set $P_1=\{S_{k-2,1},S_{k-1,1}\}$; otherwise we set $P_1=\{S_{i,j-2},S_{i,j-1}\}$.    If $s=j-1$, let $r=\kappa_{j-1}(\sh (\T_1) \setminus P')$ and set $P_1=\{S_{r-1,j-1},S_{r,j-1}\}$.  Finally, suppose $s=j-2$.  Since $\T_1$ is special and $P$ is boxed, we must have $\phi_D(S_{ij})= Z$ and further $r=\kappa_{j-2}(\sh(\T_1) \setminus P')$ is even and greater or equal to $i+2$.  We can let $P_1=\{S_{r-1,j-2},S_{r,j-2}\}$.


\vspace{.1in}
{\it Case I}.  Here $k=i-1$, implying $P \cap P' = \{S_{i,j+1}\}$.  Since both $P$ and $P'$ are boxed, we have that $\phi_D(S_{ij}) = Z$ and that both $i$ and $j$ are even.  We consider several cases.  After specifying domino positions $P_1$ and $P_2$, or $P''$ the proof follows the outline of the corresponding case of Lemma 3.2.9 unless otherwise indicated.

\begin{itemize}
    \item[(a)]  Here $i \geq 4$.  Let $r$ and $s$ be the lengths of the $i-2$ and $i-3$ rows of $\T_1$ respectively, and assume first that $r=j+1$, and $i \geq 6$ or $j \geq 4$.  Set $P_1= \{S_{i-2.j+1},S_{i-1,j+1}\}$ and $P_2=\{S_{i-2,j},S_{i-1,j}\}$.  If $r=j+1$, $i =4$, $j=2$, and $\kappa_{1}(\T_1) =4$, we set $P_1= \{S_{3,2},S_{3,3}\}$ and $P_2=\{S_{3,1},S_{4,1}\}$.   If $s=r>j+1$ or $s>r$, we define $P''$ as in Lemma 3.2.9.

         If $r=j+1$, $i =4$, $j=2$, and $\kappa_{1}(\T_1) \geq 6$, a more elaborate argument is required.  Let $P_1=\{S_{2,3},S_{3,3}\}$ and $P_2=\{S_{2,2},S_{3,2}\}$. Using the usual notation, let $\oT$ be a tableau of the same shape as $\oT_1$ with $P(n-1,\oT) = P_1$ and $P(n-2,\oT) = P_2$. If $n_v(\oT) \equiv n_v(\oT_1) \pmod{4}$, then we can continue as above.  Otherwise let $(\oT_1^1, \oT_2^1) = \EE((\oT,\oT_2),ec(1,\oT;\oT_2),L)$.  Construct $\Sigma_1$ and $(\T_1^1, \T_2^1)$ as usual, let $\Sigma_2=(\alpha_{n-1},\alpha_n)$ and $(\T_1^2, \T_2^2) = T_{\Sigma_2}^L((\T_1^1, \T_2^1))$.  The tableau $\T_2^2$ is not special, so we argue as usual in this situation and find $\Sigma_3$ and $(\T_1^3, \T_2^3) \in T_{\Sigma_3}^L((\T_1^2, \T_2^2))$ with $\T_2^3$ special and $P(n,\T_1^3) = P'$.

    \item[(b)] Here $ j\geq 4$ and the proof is parallel to that of the corresponding case in Lemma 3.2.9.

    \item[(c)]  Here $i=2$ and $j=2$.  If $t= \kappa_1(\T_1) \geq 6$, let $P''=\{S_{t-1,1},S_{t,1}\}$.  We can use cases (H) and then (D$'$) to move $n$ first to $P''$ and then to $P'$.  If $t=4$, then $n=4$.  Arguing as usual, we can find a sequence $\Sigma_1$ such that $(\T_1^1, \T_2^1) \in T_{\Sigma_1}^L((\T_1,\T_2))$ where
            $$
\raisebox{5ex}{$\T_1^1=$ \;}
\begin{small}
\begin{tableau}
:^1>{2}\\
:;>{4}\\
:^3\\
:;\\
\end{tableau}
\end{small}
 $$
Applying $\Sigma_2=(\alpha_2,D)$  achieves the desired result.  If $t=2$, then $n=3$ and it is sufficient to let $\Sigma = (\alpha_2, \alpha_3)$.
\end{itemize}


\vspace{.1in}
{\it Case J}. Here $k=i+1$ and $l=j-1$ and the proof follows the outline of the corresponding case of Lemma 3.2.9.

\vspace{.1in}

In the next four cases we assume that both $P$ and $P'$ are vertical and set $P=\{S_{ji},S_{j+1,i}\}$ and $P'=\{S_{l,k},S_{l+1,k}\}$.  Since $P$ and $P'$ are boxed, we know that both $j$ and $l$ are odd.

\vspace{.1in}
{\it Case A$'$}.  Here $k=i-1$.  If $i \neq 3$ or $l \neq 3$, let $P_1=\{S_{l-2,k},S_{l-1,k}\}$.  If $i=3$ and $l=3$, again set $P_1=\{S_{l-2,k},S_{l-1,k}\}$, define $(\oT_1, \oT_2) = (\T_1,\T_2)-L$, and let $\oT$ be a tableau of the same shape as $\oT_1$ but with $P(n-1,\oT_1)=P'$ and $P(n-2,\oT_1)=P_1$.
If $n_v(\oT) \equiv n_v(\oT_1) \pmod{4}$, the standard argument applies.   If $n_v(\oT) \not \equiv n_v(\oT_1) \pmod{4}$, there are two cases.

\begin{enumerate}
    \item[(a)] If $\kappa_1(\T_1) \geq 6 $, let $(\oT_1^1,\oT_2^1) = \EE((\oT,\oT_2, ec(1,\oT;\oT_2),L)$.  Let $\Sigma_1$ be a sequence for $\Pi^* \setminus \{\alpha_n\}$ so that $(\oT_1^1,\oT_2^1) \in T_{\Sigma_1}^L((\oT_1,\oT_2))$ and let $(\T_1^1,\T_2^1) \in T_{\Sigma_1}^L((\T_1,\T_2))$ satisfy $(\T_1^1,\T_2^1) -L=(\oT_1^1,\oT_2^1)$.  Then $P(n,\T_1^1)=P(n,\T_1)$ or $P'(n,\T_1)$.  Let $\Sigma_2=(\alpha_n,\alpha_{n-1})$ and define  $(\T_1^2,\T_2^2) = T_{\Sigma_2}^L((\T_1^1,\T_2^1))$.
Since $\T_2^2$ is not special, we argue by induction.  Consider $(\oT_1^2, \oT_2^2) = (\T_1^2,\T_2^2)-L$ and let $\Sigma_3$ be a sequence for $\Pi^* \setminus \{\alpha_n\}$ such that $(\oT_1^3, \oT_2^3) \in T_{\Sigma_3}^L((\oT_1^2, \oT_2^2))$ is special.  Let $(\T_1^3,\T_2^3) \in T_{\Sigma_3}^L((\T_1^2,\T_2^2))$ satisfy $(\T_1^3,\T_2^3) -L=(\oT_1^3,\oT_2^3)$.  Then $P(n,\T_1^3)=P'$ and $\T_2^3 = \T_2$.

 \item[(b)] If $\kappa_1(\T_1) = 4$, arguing as usual we can find a sequence $\Sigma_1$ such that $(\T_1^1, \T_2^1) \in T_{\Sigma_1}^L((\T_1,\T_2))$ where

$$
\raisebox{5ex}{$\T_1^1=$ \;}
\begin{small}
\begin{tableau}
:^1^2^5\\
:;\\
:>{3}\\
:>{4}\\
\end{tableau}
\end{small}
$$

\noindent
After applying $\Sigma_2=(\alpha_5,\alpha_4)$ we are in case (I) above.  Letting $\Sigma_3$ be the sequence of operators constructed therein, we define $\Sigma = \Sigma_1 \Sigma_2 \Sigma_3$.
\end{enumerate}

\vspace{.1in}
{\it Case B$'$}.   Here $k<i-1$.  Let $r=\kappa_{i-1}(\T_1)$.  If $i=3$, $r=2$, and $l=3$, then  $\Sigma$ is easy to find by inspection.  If $i=3$, $r=2$, and $l \geq 5$, let $P_1=\{S_{l-2,1},S_{l-1,1}\}$.  If $i=4$ and $r=3$, then $\kappa_2(\T_1)=3$ since $\T_1$ is special, $k=1$, and we let $P_1=\{S_{3,2},S_{3,3}\}$.  In all other cases, we let $P_1=\{S_{r-1,i-1},S_{r,i-1}\}$.

\vspace{.1in}
{\it Case C$'$}.  Here $k=i+1$.  If either $i \geq 3$, $i=2$ and $j \geq 5$, or $i=1$ and $l \geq 3$, let $P_1=\{S_{j-2,i},S_{j-1,i}\}$. If $i=1$ and $l=1$, then the hypotheses  of the lemma allow us to consider just those tableaux $\T_1$ all of whose dominos are vertical; we can define $P_1$ in the same way. If $i=2$ and $j=3$, we consider two cases:
    \begin{enumerate}
        \item[(a)] If $t=\kappa_1(\T_1) \geq 6$, let $P''=\{S_{t-1,1},S_{t,1}\}$.  Using case (A$'$) we can first move $n$ to $P''$ and then to $P'$ using case (D$'$).
        \item[(b)] If $\kappa_1(\T_1) =4$ and $n_v(\T_1) =5$, we can let $P_1=\{S_{1,2},S_{2,2}\}$.  Otherwise, arguing as usual we can find a sequence $\Sigma_1$ such that $(\T_1^1, \T_2^1) \in T_{\Sigma_1}^L((\T_1,\T_2))$ where

$$
\raisebox{2ex}{$\T_1^1=$ \;}
\begin{small}
\begin{tableau}
:>1^4\\
:>2\\
:^3^5\\
\end{tableau}
\end{small}
$$

\vspace{.1in}
\noindent
Applying $\Sigma_2=(\alpha_1',\alpha_3),(\alpha_4,\alpha_5)$ completes the argument.
    \end{enumerate}


\vspace{.1in}
{\it Case D$'$}. Here $k>i+1$.  Let $r= \kappa_{k-1}(\T_1)$.  If $k=3$, $r=2$, and $j=3$, then let $\Sigma=(\alpha_3,\alpha_4)$.  If $k=3$, $r=2$, and $j \geq 5$, set $P_1=\{S_{j-2,1},S_{j-1,1}\}$.  If $k=4$ and $r=3$, then we also must have $\kappa_2(\T_1) =3$;  we set $P_
1=\{S_{3,2},S_{3,3}\}$.  In all other cases, we let $P_1=\{S_{r-1,k-1},S_{r,k-1}\}$.


\vspace{.1in}

In the next six cases we assume that $P$  is vertical and $P'$ is horizonal, setting $P=\{S_{j,i},S_{j+1,i}\}$ and $P'=\{S_{l,k},S_{l,k+1}\}$.  Since both are boxed, we have that $j$ is odd and $k$ is even.

\vspace{.1in}
{\it Case E$'$}.  Here $k+1=i-1$.  We let $P_1=\{S_{l-2,i-1},S_{l-1,i-1}\}$.


\vspace{.1in}
{\it Case F$'$}.  Here $k+1<i-1$.  We let $r=\kappa_{i-1}(\T_1)$ and define $P_1=\{S_{r-1,i-1},S_{r,i-1}\}$.


\vspace{.1in}
{\it Case G$'$}.  Here $k=i+1$ and $l<j-1$.  Note first that $i$ is odd.  If $i \geq 3$ or $i=1$ and $l>2$, let $P_1 =\{S_{j-2,i},S_{j-1,i}\}$.
The case $i=1$ and $l=1$ is excluded by the hypotheses of this lemma. If $i=1$ and $l=2$, note that by a hypothesis of the lemma, we need only to consider those $\T_1$ where $S_{2,2}$ is contained in a horizontal domino.  Define $P_1$ as above.  Arguing as usual we can find a sequence $\Sigma_1$ such that $(\T_1^1, \T_2^1) \in T_{\Sigma_1}^L((\T_1,\T_2))$ with $P(n-2,\T_1^1) = P_1$, $P(n-1,\T_1^1) = P'$, and $P(n,\T_1^1) = P$.  Applying $\Sigma_2=(\alpha_{n-1},\alpha_n)$ completes the argument.


\vspace{.1in}
{\it Case H$'$}.    Here $k > i+1$.  Let $t = \kappa_{i+1}(\T_1)$.  We consider three possibilities.
    \begin{itemize}
    \item[(a)] Here $ t\leq j-3$.   Let $P_1=\{S_{j-2,i},S_{j-1,i}\}$.


    \item[(b)] Here $t= j-1$.  Let $r = \rho_{j-1}(\sh(\T_1) \setminus P')$.  If $j \geq 5$, or if $j =3$ and $r \geq 4$,  let $P_1=\{S_{j-1,r-1},S_{j-1,r}\}$.  Now consider $j=3$ and $r=3$.  If $l=1$, let $P_1=\{S_{1,k-2},S_{1,k-1}\}$ if $k>4$ and  $P_1=\{S_{1,3},S_{2,3}\}$ if $k=4$.  If $l=2$ and $s=\rho_1(\T_1) \geq 7$, let $P''=\{S_{1,s-1},S_{1,s}\}$.  By using the $l=1$ subcase above, we can first move $n$ to $P''$ and then use case (C) to move it to $P'$.  If $l=2$ and $u=\kappa_1(\T_1) \geq 6$, let $P''=\{S_{u-1,1},S_{u,1}\}$.  Using case (A$'$) we can move $n$ to $P''$ and use the $j \geq 5$ subcase above to move it to $P'$.  Finally, assume $l=2$, $u=4$, and $s=5$.  Let

    $$
\raisebox{5ex}{$\oT=$ \;}
\begin{small}
\begin{tableau}
:^1>{2}>{3}\\
:;>{4}>5\\
:;\\
\end{tableau}
\end{small}
\raisebox{5ex}{\hspace{.2in} or \hspace{.2in} $\oT=$ \hspace{.2in}}
\begin{small}
\begin{tableau}
:^1>{3}>{4}\\
:;>{5}>6\\
:^{2}\\
:;\\
\end{tableau}
\end{small}
$$

\noindent
if $i=1$ or $i=2$, respectively.  In either case, if  $n_v(\oT_1) \equiv n_v(\oT) \pmod{4}$
we can argue along the lines of the standard argument with $\Sigma_2 = (\alpha_{n-1},\alpha_{n})$.  So suppose $n_v(\oT_1) \not \equiv n_v(\oT) \pmod{4}$ and let $(\oT^1_1,\oT^1_2) = \EE((\oT,\oT_2),ec(1,\oT;\oT_2),L)$.  As usual,  we can find a sequence $\Sigma_1$ for  $\Pi^* \setminus \{\alpha_n\}$ such that
$(\oT_1^1,\oT_2^1) \in T_{\Sigma_1}^L(\oT_1,\oT_2).$  Let $(\T_1^1,\T_2^1) \in T_{\Sigma_1}^L((\T_1,\T_2))$ be such that $(\oT_1^1,\oT_2^1) = (\T_1^1,\T_2^1) - L$.  Define $\Sigma_2=(\alpha_{n-1},\alpha_n)$ and $(\T_1^2,\T_2^2)= T_{\Sigma_2}^L((\T_1^1,\T_2^1))$.  Then $\T_2^2$ is not special.  Let $(\oT_1^2,\oT_2^2) = (\T_1^2, \T_2^2)-L$.
Using induction, let $\Sigma_3$ be a sequence for  $\Pi^* \setminus \{\alpha_n\}$ so that $(\oT_1^3,\oT_2^3) \in T_{\Sigma_3}^L((\oT_1^2, \oT_2^2))$ is special.  If $(\T_1^3,\T_2^3) \in T_{\Sigma_3}^L((\T_1^2,\T_2^2))$
satisfies    $(\T_1^3,\T_2^3) -L = (\oT_1^3,\oT_2^3)$, then $P(n,\T_1^3) = P'$ and $\T_2^3=\T_2$.


    \item[(c)] Here $t=j-2$.  Note that $\phi_D(S_{ji})= Y$ since $\T_1$ is special and $P'$ is boxed.  Because $P'$ is horizontal, $r=\rho_{j-2}(\sh(\T_1) \setminus P') \geq i+2$ and we set $P_1=\{S_{j-2,r-1},S_{j-2,r}\}$.

\end{itemize}

\vspace{.1in}
{\it Cases I$'$}. Here $k=i-1$.  This case parallels case (I), albeit with fewer special subcases. We omit the arguments.

{\it Cases J$'$}. Here $k=i+1$ and $l=j-1$ and note that since $P$ and $P'$ are boxed, $j \geq 3$.  When $i \geq 3$, the proof parallels that of case (J).  When $i=1$, let $r= \rho_{l-1}(\T_1)$.  If $r = 3$, we let $P''=\{S_{l-1,3},S_{l,3}\}$.  If $r >3$, then $r \geq 5$ and we let $P''=\{S_{l-1,r-1},S_{l-1,r}\}$.  In both cases $P''$ is boxed and we argue as in the final subcases of part (a) of case (I).

This exhausts all of the possible cases, concluding the proof of the lemma when both $P$ and $P'$ are boxed.

We now assume that $P'$ is boxed and $P$ is unboxed.  We will closely follow the proof of Lemma 3.2.9.  First assume that $P$ is horizontal, setting $P=\{S_{ij},S_{i,j+1}\}$. Under these hypotheses, we have $\phi_D(S_{ij}) = W$ or $Y$, but since $\T_1$ is special, it follows that in fact $\phi_D(S_{ij}) = W$ and $i \geq 2$. Furthermore, $\rho_{i-1}(\T_1) = j+1$ for otherwise $S_{i,j+2}$ would form an empty hole.  Set $(\oT_1,\oT_2) = (\T_1,\T_2) -L$.   By Lemma 4.\ref{proposition:458} or by inspection, there is a sequence $\Sigma_1$ for $\Pi^* \setminus \{\alpha_n\}$ and a $(\oT^1_1,\oT^1_2) \in T_{\Sigma_1}^L((\oT_1,\oT_2))$ such that $\oT_2^1$ is special.  Let $(\T_1^1, \T_2^1) \in T_{\Sigma_1}^L((\T_1,\T_2))$ be such that $(\oT_1^1,\oT_2^1)=(\T_1^1, \T_2^1)-L$.  Then $\sh(\oT_1^1) = (\sh(\oT_1) \setminus \{S_{i-1,j+1}\}) \cup \{S_{ij}\}$, and from  Lemma 3.2.7-1, we can conclude that $$P(n,\T_1^1) = P'(n,\T_1) = \{S_{i-1,j+1},S_{i,j+1}\}.$$
This domino position is boxed, and the therefore the cases of our lemma that we have already proved apply.  Hence there is a sequence $\Sigma_2$ for $\Pi^*$ and a $(\T_1',\T_2') \in T_{\Sigma_2}^L((\T_1^1,\T_2^1))$ such that $P(n,\T_1') = P'$.  We set $\Sigma = \Sigma_1 \Sigma_2$.  When $P=\{S_{ij},S_{i+1,j}\}$ is vertical instead, we can conclude that $\phi_D(S_{ij}) = W$ and $\kappa_{j-1}(\T_1) = i+1$, and a transposed version of the argument above applies.

Finally, consider the case when $P'$ is unboxed.  First assume that $P'$ is horizontal, setting $P'=\{S_{ij},S_{i,j+1}\}$.  Arguing as above, we find $\phi_D(S_{ij}) = W$ and $\rho_{i-1}(\T_1) = j+1$.  Set $P''=\{S_{i-1,j+1},S_{i,j+1}\}$.  It is boxed and an extremal position in $\T_1$.  Using Propositions 4.\ref{proposition:456} and 4.\ref{proposition:454} if necessary, we can appeal to the cases of the lemma that we have already verified, letting $\Sigma_1$ be a sequence for $\Pi^*$ satisfying $(\T_1^1, \T_2^1) \in T_{\Sigma_1}^L((\T_1,\T_2))$ with $\T_2^1 = \T_2$ and $P(n,\T_1^1) = P''$. Now let $(\oT_1^1,\oT_2^1) = (\T_1^1,\T_2^1) -L$.  By Theorem 2.2.3, there is an open cycle $c \in OC(\oT_2^1)$ with $S_b(c) = S_{ij}$ and $S_f(c) = S_{i-1,j+1}$.  Again using Propositions 4.\ref{proposition:456} and 4.\ref{proposition:454} if necessary we can find a tableau $\oT$ such that $\sh(\oT) = \sh(\oT_1^1)$, $P(n-1,\oT) = \{S_{i-1,j},S_{ij}\}$, and $n_v(\oT) \equiv n_v(\oT_1^1) \pmod{4}$.  Then $\{n-1\}$ constitutes an extended cycle in $\oT$ relative to $\oT_2^1$.  Set $$(\oT_1^2,\oT_2^2) = \E((\oT_,\oT_2^1),\{n-1\},L),$$
so that $\sh(\oT_1^2)=(\sh(\oT)\setminus \{S_{ij}\})\cup \{S_{i-1,j+1}\}.$
Since $\Ss(\oT_2^2) = \oT_2^1$, we can use Theorem 4.\ref{theorem:samespecialthensequence} inductively to find a sequence $\Sigma_2$ for $\Pi^* \setminus \{\alpha_n\}$ such that $(\oT_1^2, \oT_2^2) \in T_{\Sigma_2}^L((\oT_1^1,\oT_2^1))$.  Let $(\T_1',\T_2') \in T_{\Sigma_2}^L((\T_1^1,\T_2^1))$ be such that $(\T_1',\T_2') - L = (\oT_1^2,\oT_2^2).$  Then $P(n,\T_1') = P'(n,\T_1^1) = P'$.  By setting $\Sigma = \Sigma_1 \Sigma_2$, we complete the proof of the lemma in this case.   When $P'$ is vertical, a transposed argument applies. This exhausts all of the possible cases, bringing the proof of the lemma to a merciful end.
\end{proof}

\section{}
\begin{definition} We extend Definition 3.4.1 (of the left generalized $\tau$-invariant), this time relative to type $D$, to $\mathcal{T}_D(n,n)$ in the obvious way, replacing the notion of sequence in \cite{garfinkle3} by that of Definition 4.5.1; we define the right generalized $\tau$-invariant similarly.  We denote this relation by $\sim_{GTLD}$ rather than $\sim_{GTL}$ for clarity.
\end{definition}

Then Remark 3.4.2 carries over to type $D$ with the same proof, as do 3.4.3--3.4.7, replacing the pair $\{\alpha,\beta\}$ wherever it occurs by $\{\alpha,X\}$, where $\alpha$ is one of the simple roots $\alpha_1',\alpha_2$, or $\alpha_4$ and $X=C$ or $D$.  One also replaces the conditions $1\in ec(2,\T_1;\T_2),c(2,\T_1)\in OC^*(\T_1)$ by $3\in ec((4,\T_1;\T_2),c(4,\T_1)\in OC^*(\T_1)$, respectively.   Thus for any tableau pairs $(\T_1,\T_2),(\T_1',\T_2')$ in $\mathcal{T}'(n,n)$ we have  $(\T_1,\T_2)\sim_{GTLD}(\T_1',\T_2')$ whenever $\Ss(\T_1) = \Ss(\T_2)$.  Corresponding to Lemma 3.4.8 we have 

\begin{lemma}\label{lemma:462}Let $(\T_1,\T_2)\in\mathcal{T}_D(M_1,M_2)$ with $M_1=\{1,\ldots,n\}$.  Let $1<l<n$ and let $\bar{\T}_1$ be the tableau obtained from $\T_1$ by removing the dominos labelled $l+1,\ldots,n$.  Let $\bar{P}$ be an extremal position in $\bar{\T}_1$.  Assume that there exists a tableau $\T$ having the same shape as $\bar{\T}_1$, the $l$-domino in position $\bar{P}$, and the same $n_v$ value as $n_v(\bar{\T}_1$ modulo 4.  Then there is a sequence $\Sigma$ for $\{\alpha_1',\alpha_2,\ldots,\alpha_l\}$ and a $(\T_3,\T_2)\in T_\Sigma^L((\T_1,\T_2))$ such that $P(l,\T_3)= \bar{P}$ and, for $l+1\le r\le n, P(r,\T_3) = P(r,\T_1)$.
\end{lemma}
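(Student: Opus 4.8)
The plan is to push the problem down to the tableaux on $\{1,\ldots,l\}$ obtained by deleting the top dominos, solve it there with the main theorem of Section~4.5, and then lift the resulting sequence back up. To set this up, let $(\bar{\T}_1,\bar{\T}_2)$ be the result of applying the operation $-L$ to $(\T_1,\T_2)$ exactly $n-l$ times, so that $\bar{\T}_1$ is the tableau of the statement and $\bar{\T}_2$ is the corresponding right tableau, with label set $\bar M_2$. The hypothesis furnishes a left tableau $\T$ with $\sh(\T)=\sh(\bar{\T}_1)$, with its $l$-domino in position $\bar P$, and with $n_v(\T)\equiv n_v(\bar{\T}_1)\pmod 4$. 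I would pair it with $\bar{\T}_2$: since $n_v((\T,\bar{\T}_2))=n_v(\T)+n_v(\bar{\T}_2)\equiv n_v(\bar{\T}_1)+n_v(\bar{\T}_2)=n_v((\bar{\T}_1,\bar{\T}_2))\pmod 4$, the pair $(\T,\bar{\T}_2)$ lies in the same one of $\mathcal{T}'_D,\mathcal{T}''_D$ as $(\bar{\T}_1,\bar{\T}_2)$ and so is a legitimate element of $\mathcal{T}_D(\{1,\ldots,l\},\bar M_2)$.

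Next I would apply Theorem 4.\ref{theorem:samespecialthensequence} (with $l$ in place of $n$) to the pairs $(\bar{\T}_1,\bar{\T}_2)$ and $(\T,\bar{\T}_2)$. They share the right tableau $\bar{\T}_2$, so $\Ss(\bar{\T}_2)=\Ss(\bar{\T}_2)$ holds trivially, and the congruence just verified supplies the required parity condition. The theorem then produces a sequence $\bar\Sigma$ for $\Pi^*$ on the label set $\{1,\ldots,l\}$ with $(\T,\bar{\T}_2)\in T_{\bar\Sigma}^L((\bar{\T}_1,\bar{\T}_2))$. Because the left label set is $\{1,\ldots,l\}$, the only simple roots occurring are $\alpha_1',\alpha_2,\ldots,\alpha_l$, and any type-$8$ operators act on the dominos $1,2,3,4$ (indices at most $l$); hence $\bar\Sigma$ is a sequence for $\{\alpha_1',\ldots,\alpha_l\}$ in the sense of the lemma. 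In particular $P(l,\T)=\bar P$, while the right tableau is unchanged.

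Finally I would lift $\bar\Sigma$ to $(\T_1,\T_2)$ by reversing the deletions one at a time. Since $\bar\Sigma$ involves no root $\alpha_r$ with $r>l$, the type-$D$ analogues of Lemmas 3.2.6 and 3.2.7, which the paper has observed carry over unchanged, apply at each stage: Lemma 3.2.6 produces, above each intermediate restriction, an element of $T_{\bar\Sigma}^L$ of the enlarged pair whose image under $-L$ is the smaller pair already built, and Lemma 3.2.7 guarantees that this enlargement leaves the right tableau and the position of the newly restored top domino unaltered. Iterating from label $n$ down to $l+1$ yields $(\T_3,\T_2)\in T_{\bar\Sigma}^L((\T_1,\T_2))$ whose deletion of the dominos $l+1,\ldots,n$ returns $(\T,\bar{\T}_2)$. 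Thus $P(l,\T_3)=\bar P$, the right tableau is the original $\T_2$, and $P(r,\T_3)=P(r,\T_1)$ for every $l+1\le r\le n$; setting $\Sigma=\bar\Sigma$ completes the proof.

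All of the genuine work, namely relocating a domino to a prescribed extremal position under a fixed $n_v$-parity, is absorbed into Theorem 4.\ref{theorem:samespecialthensequence}, so the argument is essentially a transcription of the proof of Lemma 3.4.8 into the present framework. The one point demanding care is the iterated lift: at each of the $n-l$ stages one must confirm that the absence of $\alpha_{l+1},\ldots,\alpha_n$ from $\bar\Sigma$ is precisely what allows Lemma 3.2.7 to pin down both $\T_2$ and the higher domino positions simultaneously. I expect this bookkeeping, rather than any new combinatorial difficulty, to be the main obstacle.
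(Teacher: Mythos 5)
Your proposal is correct and takes essentially the same route as the paper, whose entire proof is the instruction to argue as in Lemma 3.4.8 of \cite{garfinkle3}---restrict to the labels $1,\ldots,l$, connect the restricted pair to the target pair furnished by the parity hypothesis, and lift the resulting sequence back up with the type-$D$ versions of Lemmas 3.2.6 and 3.2.7---which is exactly your restrict--connect--lift skeleton. The only divergence is which piece of the Section 6 machinery is cited at the connecting step: you invoke Theorem 4.\ref{theorem:samespecialthensequence} (the direct type-$D$ analogue of Theorem 3.2.2 of \cite{garfinkle3}, whose hypotheses your parity computation verifies), while the paper cites its key ingredient, Lemma 4.\ref{lemma:makeextremal}; this is a difference of bookkeeping, not of substance.
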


\begin{proof} This is proved as in 3.4.8, using Lemma 4.5.9 instead of Theorem 3.2.2.
\end{proof}

Then 3.4.9--3.4.14 carry over to type $D$ with the same proofs.  Our main result extends Theorem 3.4.17 to type $D$.

\begin{theorem}\label{theorem:463}Let $(\T_1,\T_2),(\T_1',\T_2')$ both lie in  $\mathcal T_D'(M_1,M_2)$  or both lie in\linebreak $\mathcal T_D"(M_1,M_2)$,where $M_1=M_2 =\{1,\ldots,n\}$.  Then $(\T_1,\T_2)\sim_{GTLD}(\T_1',\T_2')$ if and only if $\Ss(\T_1) = \Ss(\T_2)$.
\end{theorem}

\begin{proof} This is proved in the same way as Theorem 3.4.17, using the analogues of Lemmas 3.4.15 and 3.4.16 for type $D$, except that (as noted above) Lemma 3.4.8 no longer holds for all tableau shapes.  A further complication is that specialness of a tableau shape is no longer preserved under transposition, so that the transposes of certain cases must be considered separately; fortunately the hypothesis of specialness is not used in many cases of the proof of Theorem 3.4.17.   For any tableau shape for which the conclusion of Lemma 3.4.8 fails in type $D$ (that is, a tableau shape obtained from one of the bad ones listed after Proposition 4.5.6 by adding a single domino), we therefore compute the generalized $\tau$-invariant explicitly and check directly that it does not coincide for the tableaux $\T_1$ and $\T_1'$.  In all cases the computation is easy.
\end{proof}

\section{}

We define the relations $\sim_{JLD}$ and $\sim_{JRD}$ on $W', \mathcal{S}'(n,n),\mathcal{S}''(n,n)$ and $\mathcal {T}_D(n,n)$ in exactly the same way for type $D$ as $\sim_{JR},\sim_{JL}$ were defined for types $B$ and $C$ in Definition 3.5.1; note that $\sim_{JR}, \sim_{JRD}$ are the {\sl left} and not the right cell relations of Joseph, as erroneously stated in that definition.

\begin{theorem}\label{theorem:471} Let $(\T_1,\T_2),(\T_1',\T_2')\in\mathcal{T}_D(n,n)$.  Then $(\T_1,\T_2)\sim_{JRD}(\T_1',\T_2')$ if and only if $\Ss(\T_1) = \Ss(\T_1')$.
\end{theorem}

\begin{proof} This follows at once from Theorem 4.5.2 (and its easy converse).
\end{proof}

Now 3.5.4--3.5.6 carry over immediately to type $D$ with the obvious changes in notation; note that the condition $w_1\sim_{JL} w_2$ in Proposition 3.5.4 (1) should read $w_1\sim_{JR} w_2$; similarly, the condition $w_1\sim_{GTR} w_2$ in (2) should read $w_1\sim_{GTL}(w_2)$.  Likewise the condition $w_1\sim_{GTR} w_2$ in Corollary 3.5.6 (1) should read $w_1\sim_{GTL} w_2$.  In the statements of these results for type $D$, we assume throughout that either $w_1,w_2\in W'$ or $w_1,w_2\in W"$.  Then Definition 3.5.7 carries over word for word to type $D$ and we have $I_\lambda(w_1) \sim_{GTD} I_\lambda(w_2)$ if and only if $w_1\sim_{GTL} w_2$.
We have

\begin{theorem}\label{theorem:472} Suppose all the simple factors of $\Delta_{\lambda}$ are of type $D$.  Let $I_1,I_2\in$Prim$_\lambda (U(\mathfrak g))$.  Then $I_1\sim_{GTD}I_2$ if and only if $I_1=I_2$.
\end{theorem}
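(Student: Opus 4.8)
The plan is to derive Theorem 4.\ref{theorem:472} from the combinatorial classification of the generalized $\tau$-invariant in Theorem 4.\ref{theorem:463}, using the standard dictionary between primitive ideals and Weyl-group elements. First I would reduce to a single irreducible factor. Since $\Delta_\lambda$ is by hypothesis a direct product of irreducible root systems, each of type $D$, both $\textup{Prim}_\lambda(U(\mathfrak g))$ and the relation $\sim_{GTD}$ factor as products over the simple components: a sequence $\Sigma$ for $\Pi^*$ supported on one factor acts only on that factor's data, so the operators $T_\Sigma^L$ respect the decomposition. It therefore suffices to treat one type-$D_n$ factor, where the relevant integral Weyl group is exactly $W'$, the type-$D_n$ subgroup of $W$, and the product case then follows component by component.

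For a single factor, fix a regular dominant integral $\lambda$ so that every $I\in\textup{Prim}_\lambda$ equals $I_\lambda(w)$ for some $w\in W'$. I would then invoke the two translations. On one side, the classical theorem of Joseph identifies equality of primitive ideals with a Kazhdan--Lusztig cell equivalence; in the notation of this section this is $I_\lambda(w_1)=I_\lambda(w_2)$ if and only if $w_1\sim_{JLD}w_2$ (equivalently $\sim_L$). On the other side, the analogue of Definition 3.5.7 recorded above gives $I_\lambda(w_1)\sim_{GTD}I_\lambda(w_2)$ if and only if $w_1\sim_{GTDR}w_2$. Thus the theorem reduces to the purely group-theoretic assertion that, on $W'$, the relations $\sim_{GTDR}$ and $\sim_{JLD}$ coincide.

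The heart of the argument is to transport both relations to $\mathcal{T}_D(n,n)$ through the domino Robinson--Schensted correspondence $A\circ\delta$, $w\mapsto(\LL(w),\RR(w))$, and to observe that they become one and the same shape condition. Because $\delta$ interchanges $\tau^L$ and $\tau^R$ (the remark after Definition 4.3.1) and $A$ is compatible with the operators and with the $\tau$-invariants and types (Proposition 4.\ref{proposition:alpha1tauRSK}, Theorems 2.1.19 and 4.\ref{theorem:AcommutesT}, and the type-preservation remarks of Section 4.4), the relation $\sim_{GTDR}$ on $W'$ corresponds to the left relation $\sim_{GTD}$ on tableau pairs, which by Theorem 4.\ref{theorem:463} is precisely the condition $\Ss(\LL(w_1))=\Ss(\LL(w_2))$. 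Dually, the cell relation $\sim_{JLD}$ corresponds under the $L\leftrightarrow R$ interchange to $\sim_{JRD}$ on tableau pairs, which Theorem 4.\ref{theorem:471} identifies with the very same condition $\Ss(\LL(w_1))=\Ss(\LL(w_2))$. Since both relations are characterized by equality of the same left special shape, they coincide, and chaining the equivalences gives $I_1\sim_{GTD}I_2$ if and only if $I_1=I_2$. (The easy inclusion $\sim_{JLD}\subseteq\sim_{GTDR}$, that the generalized $\tau$-invariant is a cell invariant, can also be seen directly since each $T_\Sigma$ preserves the cell structure.)

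The main obstacle here is bookkeeping rather than new mathematics: one must track the two independent left/right interchanges—one induced by $\delta$ on the Robinson--Schensted side, and one induced by passage to annihilators on the representation-theoretic side—so that the shape condition produced by $\sim_{GTDR}$ via Theorem 4.\ref{theorem:463} and the shape condition produced by the cell relation via Theorem 4.\ref{theorem:471} genuinely land on the same tableau rather than on $\LL$ versus $\RR$. One must also correctly import the classical identification of primitive-ideal equality with the appropriate cell, and verify the product reduction in the first paragraph. All of the substantive combinatorics—that the generalized $\tau$-invariant detects the special shape at all—is already contained in Theorem 4.\ref{theorem:463}, so once the conventions are aligned no genuinely hard step remains.
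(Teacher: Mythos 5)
Your proposal is correct and follows essentially the same route as the paper: the paper derives Theorem 4.\ref{theorem:472} by combining the carryovers of 3.5.4--3.5.6 (equality of primitive ideals $=$ cell equivalence $=$ $\sim_{JLD}$) and of Definition 3.5.7 ($I_\lambda(w_1)\sim_{GTD}I_\lambda(w_2)$ iff $w_1\sim_{GTDR}w_2$) with Theorems 4.\ref{theorem:471} and 4.\ref{theorem:463}, which identify both relations, transported through $A\circ\delta$, with the single condition $\Ss(\LL(w_1))=\Ss(\LL(w_2))$. Your left/right bookkeeping and the reduction to a single simple factor match what the paper leaves implicit.
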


We conclude with the definition of the operator $T_{\alpha_1'C}^L$ of Definition 4.4.10 on the set $\mathcal{T}_D^S(n)$ of tableaux of special shape relative to type $D$; we make the analogous definition for the other operators defined in Section 4.  If $C^3$ or $C^4$ is a subset of a tableau $\T$ of special shape, then the cycle $c(4,\T_1)$ is necessarily closed; move through this cycle and replace the subset $C^1$ (resp.\ $C_2$) in the resulting tableau by $B_1^1$ (resp.\ $B_1^2$) to obtain the single tableau $T_{\alpha_1'C}(\T)$.  Otherwise $C^1$ or $C^2$ is a subset of $\T$; replace it by $B^1$ or $B^2$ to obtain a tableau $\T'$.  if $c(4,\T')$ is open, take $T_{\alpha_1'C}(\T)$ to be the single tableau $\T'$.  Finally, if $c(4,\T)$ is closed, $T_{\alpha_1'C}(\T)$ has two values, the first one being $\T'$ and the second one obtained from it by moving through $c(4,\T')$.

Then the classification theorem reads

\begin{theorem}\label{theorem:473} Suppose that $\mathfrak g$ is of type $D_n$ and $\lambda\in\mathfrak h$ is integral, regular, and anti-dominant.  Then the map $cl$ from Prim$_\lambda (U(\mathfrak g))$ to $\mathcal T_D^S(n)$ sending $I_\lambda(w)$ to $\Ss(\LL(\delta(w)))$ is a bijection; it is also the unique map from its domain to its range preserving $\tau$-invariants and commuting with the operators $T_{\alpha\beta}$ and the operators of Definition 4.4.3.
\end{theorem}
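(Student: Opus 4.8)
The plan is to write $cl = \Ss \circ \LL \circ \delta$ and to reduce every clause to separation theorems already in hand. Recall three maps: the Duflo surjection $w \mapsto I_\lambda(w)$ from $W'$ onto $\mathrm{Prim}_\lambda$, the bijection $A\circ\delta : W' \to \mathcal{T}_D'(n,n)$ of the corollary closing Section 4.1, and the passage $\Ss\circ\mathrm{pr}_1$ sending a pair in $\mathcal{T}_D'(n,n)$ to the special shape of its first tableau. Thus $cl$ is simultaneously well defined and injective exactly when, for $w_1,w_2 \in W'$,
\[ I_\lambda(w_1)=I_\lambda(w_2) \iff A(\delta(w_1))\sim_{GTD}A(\delta(w_2)) \iff \Ss(\LL(\delta(w_1)))=\Ss(\LL(\delta(w_2))). \]

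The first equivalence is assembled from Theorem 4.\ref{theorem:472}, which identifies $I_1=I_2$ with $I_1\sim_{GTD}I_2$, together with the identification stated just before it, $I_\lambda(w_1)\sim_{GTD}I_\lambda(w_2)\iff w_1\sim_{GTDR}w_2$. Since $\delta$ interchanges $\tau^L$ and $\tau^R$ and carries right operators to left operators (the remark after Definition 4.\ref{definition:tau}), the right generalized $\tau$-invariant on $W'$ becomes the left one on $\mathcal{S}(n,n)$; and because $A$ intertwines all the relevant operators — for $\{\alpha_1',\alpha_3\}$ by Theorem 4.\ref{theorem:AcommutesT}, for $\{\alpha_i,\alpha_{i+1}\}$ by Theorem 2.1.19, and for the operators of Definition 4.\ref{definition:Tforothertypes} by the proposition establishing $T^L(A(\gamma))=A(T^L(\gamma))$ — the relation $\sim_{GTD}$ transports intact to $\mathcal{T}_D'(n,n)$. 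The second equivalence is precisely Theorem 4.\ref{theorem:463}. For surjectivity, since $A\circ\delta$ is onto $\mathcal{T}_D'(n,n)$ it suffices to realize each $S\in\mathcal{T}_D^S(n)$ as $\Ss(\T_1)$ for some pair in $\mathcal{T}_D'(n,n)$: take $\T_1$ standard of shape $S$ and pick $\T_2$ of the same shape with $n_v$ adjusted, using Proposition 4.\ref{proposition:notnested+2} when $S$ is not of nested type and the single-hook description in the remark after Proposition 4.\ref{proposition:notnestedextremal} otherwise, so that $n_v((\T_1,\T_2))\equiv 0\pmod 4$.

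For uniqueness, observe that the generalized $\tau$-invariant is by construction generated by the $\tau$-invariant together with the operators $T_{\alpha\beta}$ and those of Definition 4.\ref{definition:Tforothertypes}. Hence any map $cl'$ from $\mathrm{Prim}_\lambda$ to $\mathcal{T}_D^S(n)$ preserving $\tau$-invariants and commuting with these operators must preserve $\sim_{GTD}$-classes. Because $\sim_{GTD}$ separates primitive ideals (Theorem 4.\ref{theorem:472}) and separates special shapes (Theorem 4.\ref{theorem:463}), the image $cl'(I)$ is forced to be the unique special shape whose generalized $\tau$-invariant equals that of $I$ — the same constraint met by $cl$ — so $cl'=cl$. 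This argument presupposes that $cl$ itself has the two stated properties, which follows from the commutation facts of the previous paragraph together with the compatibility of $\Ss$ with moving through cycles (Propositions 4.\ref{proposition:347}, 4.\ref{proposition:3.4.6D}, and 4.\ref{proposition:3.4.7D}).

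The main obstacle is this last compatibility: that $\Ss$ intertwines the operators on $\mathcal{T}_D'(n,n)$ with the operators $T_{\alpha_1'C}$ and their analogues, as freshly defined on $\mathcal{T}_D^S(n)$ in the paragraph preceding the theorem. One must check, case by case, that the prescription \emph{move through $c(4,\T)$ when it is closed, then perform the replacement} is genuinely the $\Ss$-image of the operator on pairs, and in particular that the single-valued versus two-valued dichotomy survives the passage to special shape. This verification, resting on Proposition 4.\ref{proposition:347} and the structure of cycles in special-shape tableaux, is the step demanding real care rather than formal bookkeeping.
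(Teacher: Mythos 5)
Your well-definedness/injectivity chain and your uniqueness paragraph are essentially the assembly the paper intends (it states this theorem with no separate proof, as the culmination of Theorem 4.\ref{theorem:472}, the equivalence $I_\lambda(w_1)\sim_{GTD}I_\lambda(w_2)\Leftrightarrow w_1\sim_{GTDR}w_2$, the transport of $\tau$-invariants and operators through $\delta$ and $A$, and Theorem 4.\ref{theorem:463}). However, your surjectivity argument has a genuine gap.

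You claim each special tableau $S\in\mathcal{T}_D^S(n)$ can be realized as the first component of a pair in $\mathcal{T}_D'(n,n)$ by taking $\T_1=S$ and picking $\T_2$ of the same shape ``with $n_v$ adjusted'' so that $n_v((\T_1,\T_2))\equiv 0\pmod{4}$. When $\sh(S)$ is of nested type no adjustment is possible: such a shape admits a \emph{unique} domino tiling (remark after Definition 4.\ref{definition:nested}), so every $\T_2$ of that shape has $n_v(\T_2)=n_v(S)$, and $n_v((\T_1,\T_2))=2n_v(S)$ is whatever it is. By the remark following Proposition 4.\ref{proposition:notnested+2}, the hooks with odd first row and even first column are special shapes of nested type, and those whose first column has length $\equiv 2\pmod 4$ have $n_v(S)$ odd; the hook $(2n-1,1)$, with $n_v(S)=1$, is such a shape for every $n$. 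For these $S$ every same-shape pair lies in $\mathcal{T}_D''(n,n)$, never in $\mathcal{T}_D'(n,n)$, so the required $\T_2$ does not exist, and neither of the two remarks you cite can produce one --- they describe cycle structure and low-rank shapes, not a second tiling. Concretely, for $n=2$ the four pairs in $\mathcal{T}_D'(2,2)$ have first components of shapes $(4)$, $(2,2)$, $(2,2)$, $(1^4)$; the special tableau of shape $(3,1)$ enters the image of $cl$ only as $\Ss(\T_1)$ with $\T_1$ the (non-special) all-horizontal tableau of shape $(4)$.

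This is precisely where $\Ss$ does real work and why the classifying map is $\Ss\circ\LL\circ\delta$ rather than $\LL\circ\delta$: for a nested special $S$ with $n_v(S)$ odd, one must realize $S$ as $\Ss(\T_1)$ for a $\T_1$ of \emph{different} shape, e.g.\ $\T_1=\E(S,c)$ where $c$ is the unique open cycle of $S$ (by the remark after Proposition 4.\ref{proposition:notnestedextremal} it consists of all the dominos of the hook). Then $\Ss(\T_1)=S$, and one must further check that this move changes $n_v$ in the parity-correcting direction (landing on the hook with longer first row, $n_v(\T_1)=n_v(S)-1$ even), after which same-shape pairs $(\T_1,\T_2)$ with $n_v((\T_1,\T_2))\equiv 0\pmod 4$ do exist. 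Your proposal, which fixes $\T_1=S$ throughout, cannot be repaired without this extra cycle-moving step, so as written the map you construct misses exactly the special tableaux of nested shape with odd vertical count.
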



\begin{thebibliography}{10}


\bibitem{barbasch:vogan}
D.~Barbasch and D.~Vogan.
\newblock
Primitive ideals and orbital integrals in complex classical groups.
\newblock
{\em Math. Ann.} 259(2):153--199,  1982.

\bibitem{bourbaki:ch4}
N.~Bourbaki.
\newblock
Groupes et Alg\`{e}bres de Lie, Chapitre 4.
\newblock
Hermann, Paris, 1968.

\bibitem{duflo}
M.~Duflo.
\newblock Sur la classification des id\' eaux primitifs dans l'alg\`ebre enveloppante d'une alg\`ebre de Lie semi-simple,
\newblock {\em Ann.\ of Math.}, 105:107--120, 1977.

\bibitem{garfinkle1}
D.~Garfinkle.
\newblock On the classification of primitive ideals for complex classical {L}ie
  algebras (I),
\newblock {\em Compositio Math.}, 75(2):135--169, 1990.

\bibitem{garfinkle2}
D.~Garfinkle,
\newblock On the classification of primitive ideals for complex classical {L}ie
  algebras (II),
\newblock {\em Compositio Math.}, 81(3):307--336, 1992.

\bibitem{garfinkle3}
D.~Garfinkle.
\newblock On the classification of primitive ideals for complex classical Lie
  algebras (III).
\newblock {\em Compositio Math.}, 88:187--234, 1993.

\bibitem{garfinkle:vogan}
D.~Garfinkle and D.~A.~Vogan, Jr.
\newblock
On the structure of {K}azhdan-{L}usztig cells for branched {D}ynkin diagrams
\newblock
{\em J. Algebra}., 153(1):99--120, 1992.


\end{thebibliography}
\end{document}